\documentclass[11pt,english,british]{extarticle}
\usepackage[T1]{fontenc}
\usepackage[utf8]{inputenc}

\usepackage{amsmath,amssymb,amsthm}

\usepackage{graphicx}
\usepackage{booktabs}
\usepackage{array}
\usepackage{float}
\usepackage{subcaption}
\usepackage{caption}
\usepackage{adjustbox}
\usepackage{tabularx}
\usepackage{makecell}

\usepackage[numbers]{natbib}

\usepackage[a4paper,margin=1in]{geometry}

\newtheorem{thm}{Theorem}
\newtheorem{lem}{Lemma}

\newtheorem{remark}{Remark}

\usepackage[colorlinks=true,linkcolor=black,citecolor=blue,urlcolor=black]{hyperref}

\begin{document}
\title{A Layer-Resolving Rational Trial-Space Method for Convection-Dominated Convection–Diffusion Equations}
\author{
Zihao Guo$^{1}$ \and Xin Li$^{2,*}$ \and Zhihong Xia$^{3,\dagger}$
}
\date{
$^{1}$Institute for Advanced Research, Great Bay University, Dongguan, China\\
$^{2}$College of Computer Science and Technology, Dongguan University of Technology, Dongguan, China\\
$^{3}$Department of Mathematics, Northwestern University, Evanston, USA\\[6pt]
$^{*}$Corresponding author: \protect\href{mailto:xinli2023@u.northwestern.edu}{xinli2023@u.northwestern.edu}\\
$^{\dagger}$Co-corresponding author: \protect\href{mailto:xia@math.northwestern.edu}{xia@math.northwestern.edu}
}
\maketitle
\begin{abstract}
Convection-dominated convection--diffusion problems develop thin layers
whose values, gradients, and curvatures exhibit strongly different physical
scales. Standard neural trial spaces do not explicitly encode this coupled
structure. We introduce an integrated Cauchy trial space for resolving such
layers. Each ridge atom admits a geometric interpretation through its normal
direction, location, and physical width
\(\rho=d/\|\boldsymbol w\|\). The atom represents a transition at the
solution level, while its first and second derivatives generate localized
Cauchy-type gradient and curvature profiles.

For analytic layer profiles in stretched coordinates, we prove that the same
collection of integrated Cauchy atoms approximates the profile and its first
two derivatives with a common exponential rate. After rescaling to physical
space, this yields layer-thickness-uniform best-approximation estimates in
appropriately scaled norms, covering both \(O(\varepsilon)\) exponential
layers and \(O(\sqrt{\varepsilon})\) characteristic layers. The resulting
trial space is compatible with both strong and variational formulations.
Numerical experiments on interior, boundary, outflow, corner, and curved
layers show improved accuracy and parameter efficiency over standard neural
representations, and further improvements when the proposed trial space is
embedded into stabilized hp-VPINN frameworks.

\medskip{}

\emph{Key words}: Physics-informed neural networks, Convection-dominated convection--diffusion equations, Integrated Cauchy activation, XNet, Layer-resolving representation

\medskip{}

\emph{MSC classifications}: 65N35, 35B25, 65D15, 68T07
\end{abstract}

\section{Introduction}
\label{sec:introduction}

Convection--diffusion equations are fundamental models for transport
phenomena in fluid mechanics, heat and mass transfer, plasma dynamics,
semiconductor devices, and porous media
\cite{evans2010,ferziger2002,bird2002,bear1972,depaoli2023}. A representative
steady problem is
\begin{equation}
    -\varepsilon\Delta u
    +
    \boldsymbol b\cdot\nabla u
    +
    cu
    =
    f
    \qquad
    \text{in }\Omega,
    \label{eq:intro_convection_diffusion}
\end{equation}
supplemented with suitable boundary conditions. Here,
\(\varepsilon>0\) is the diffusion coefficient,
\(\boldsymbol b\) is the convection field, \(c\) is a reaction
coefficient, and \(f\) is a source term. When
\(0<\varepsilon\ll1\), the problem is singularly perturbed and its
solutions may contain thin boundary, interior, outflow, characteristic,
corner, or curved layers \cite{roos2008,augustin2011}. These structures
occupy only a small fraction of the physical domain, but they frequently
determine the global accuracy and stability of the numerical solution.
Failure to resolve them can lead to excessive numerical diffusion,
spurious oscillations, violations of physical bounds, and large errors in
fluxes and differential residuals \cite{morton1996}.

The difficulty is not merely that the solution varies rapidly. Locally,
the leading variation across a layer of physical thickness \(\delta\) can
be represented in a stretched normal coordinate as
\begin{equation}
    u_\delta(\boldsymbol x)
    =
    U\left(
        \frac{\boldsymbol n\cdot\boldsymbol x-s}{\delta}
    \right),
    \qquad
    \|\boldsymbol n\|=1,
    \label{eq:intro_local_layer}
\end{equation}
where \(s\) specifies the layer location and
\(\boldsymbol n\) its normal direction. The corresponding differential
scaling is
\begin{equation}
    u_\delta=O(1),
    \qquad
    \nabla u_\delta=O(\delta^{-1}),
    \qquad
    D^2u_\delta=O(\delta^{-2}).
    \label{eq:intro_layer_scaling}
\end{equation}
Thus the reaction, convection, and diffusion terms act on three coupled
structures: an \(O(1)\) transition, a localized first derivative, and a
localized, often sign-changing curvature profile. An approximation that
is accurate only at the solution level may therefore remain inaccurate
after application of the differential operator.

Convection--diffusion problems can also contain several distinct layer
scales. If the convection field has a nonzero normal component at an
outflow boundary, the local balance between normal diffusion and
convection gives an exponential layer thickness of the form
\begin{equation}
    \delta_E
    \asymp
    \frac{\varepsilon}
         {|\boldsymbol b\cdot\boldsymbol n_E|},
    \label{eq:intro_exponential_layer_scale}
\end{equation}
which is \(O(\varepsilon)\) when the normal convection is of order one.
At a characteristic boundary, however,
\(\boldsymbol b\cdot\boldsymbol n_P=0\), and normal diffusion is balanced
by tangential transport or reaction. The resulting characteristic or
parabolic layer generally has thickness
\begin{equation}
    \delta_P
    \asymp
    \sqrt{\varepsilon}.
    \label{eq:intro_parabolic_layer_scale}
\end{equation}
A single solution may contain both scales, together with their interaction
near corners. Resolving such problems requires a trial space that can
adapt its physical scale without increasing its representation complexity
in proportion to \(\varepsilon^{-1}\).

Classical numerical methods address convection dominance through
stabilization, fitted or adaptive meshes, and layer-aware approximation
spaces. Representative approaches include streamline
upwind/Petrov--Galerkin methods \cite{brooks1982,giera2015},
Galerkin/least-squares schemes \cite{hughes1989gls}, variational multiscale
methods \cite{hughes1995vms,codina2002oss}, fitted finite difference and
finite element discretizations, discrete-maximum-principle-preserving
methods \cite{Barrenechea2024}, and layer-adapted or adaptive finite
elements \cite{linss2010}. Although these methods differ substantially,
they share an important principle: in the convection-dominated regime,
uniform approximation alone is generally insufficient. The mesh,
trial space, test space, or stabilization mechanism must reflect the
location, direction, or scale of the unresolved layer.

Physics-informed neural networks (PINNs) \cite{raissi2019} provide a
mesh-free alternative in which a neural trial function is determined by
minimizing residuals of the governing equation and boundary conditions.
Numerous extensions have been proposed, including adaptive sampling and
training strategies \cite{wang2024}, domain decomposition
\cite{kharazmi2020,jagtap2020}, Fourier-feature representations
\cite{li2023}, and KAN-based architectures
\cite{toscano2025,patra2025}. Physics-informed methods have been applied to
a broad range of forward, inverse, and multiphysics problems
\cite{Karniadakis2021,cai2021,haghighat2021,jin2021}. Nevertheless,
transport-dominated and singularly perturbed equations remain challenging
for standard neural architectures. Sharp layers produce highly
anisotropic functions and derivatives, which can lead to severe
approximation, sampling, and optimization difficulties
\cite{wang2022,krishnapriyan2021,wang2022curriculum}.

Recent work on neural methods for convection-dominated problems has
therefore focused on improved residual formulations, stabilization,
boundary treatment, and training procedures. Examples include strong- and
weak-form PINNs \cite{sikora2023}, bound-preserving formulations
\cite{matthaiou2025}, stabilized hp-VPINNs \cite{anandh2025}, and studies
of alternative loss functionals for singularly perturbed transport
problems \cite{frerichs2026}. Such developments are important because they
improve residual enforcement and suppress nonphysical behavior. However,
the approximation is still performed in the neural trial space generated
by the network. If this space does not efficiently represent the
transition and its localized derivatives, modifications of the loss
functional do not by themselves eliminate the underlying representation
difficulty.

The present work follows a complementary direction. Rather than changing
only the loss or stabilization strategy, we construct and analyze a neural
trial space designed around the differential structure of
convection--diffusion layers. The desired basis element should represent
an \(O(1)\) transition at the solution level, while its first and second
derivatives remain localized on the same physical scale and exhibit the
\(O(\delta^{-1})\) and \(O(\delta^{-2})\) amplitudes in
\eqref{eq:intro_layer_scaling}. It should also permit the physical width
of the basis element to scale with either
\(\delta_E=O(\varepsilon)\) or
\(\delta_P=O(\sqrt{\varepsilon})\), without requiring an increasing
number of basis functions solely because the layer has become thinner.

Cauchy rational representations provide a natural starting point for this
construction. XNet \cite{li2025,li20252} introduced real-valued Cauchy
representations motivated by the Cauchy integral formula and its discrete
quadrature, and demonstrated their approximation capability and parameter
efficiency. Their use in physics-informed computation was subsequently
investigated in \cite{si2026}. A Cauchy function is localized and is
therefore well suited to peak-type or derivative-type structures.
A convection--diffusion layer, however, is usually transition-type at the
solution level, with localization appearing only after differentiation.
This suggests integrating the Cauchy representation rather than applying
it directly to the solution values.

We therefore introduce an integrated Cauchy layer trial space. For an
integrated Cauchy ridge with affine parameters
\((\boldsymbol w,b,d)\), define
\begin{equation}
    \boldsymbol n
    =
    \frac{\boldsymbol w}{\|\boldsymbol w\|},
    \qquad
    s
    =
    -\frac{b}{\|\boldsymbol w\|},
    \qquad
    \rho
    =
    \frac{d}{\|\boldsymbol w\|}.
    \label{eq:intro_geometric_parameters}
\end{equation}
These quantities describe the normal direction, location, and physical
width of the corresponding ridge feature. In geometric form, an atom can
be written as a combination of arctangent and logarithmic components whose
normal coordinate is
\[
    \tau
    =
    \frac{\boldsymbol n\cdot\boldsymbol x-s}{\rho}.
\]
The atom is transition-type at the solution level, its first derivative
is a localized Cauchy profile proportional to \(\rho^{-1}\), and its
second derivative is a localized Cauchy-derivative profile proportional to
\(\rho^{-2}\). Consequently, one collection of atom parameters generates
the complete value--gradient--curvature hierarchy required by the
differential operator.

The main analytical result of this work is stronger than a value-only
approximation statement. Let \(U\) be an analytic layer profile in an
order-one stretched coordinate. We first approximate \(U'\) by a Cauchy
sum obtained from a discretized Cauchy integral. Integrating the same
Cauchy atoms gives an integrated Cauchy approximation \(G_M\) of \(U\),
while a complex-neighborhood Cauchy estimate controls the derivative of
the approximation error. This construction yields
\begin{equation}
    \sum_{k=0}^{2}
    \left\|
        U^{(k)}-G_M^{(k)}
    \right\|_{L^\infty(J)}
    \le
    C\sigma^{-M},
    \qquad
    \sigma>1,
    \label{eq:intro_derivative_stable_rate}
\end{equation}
for the same set of integrated Cauchy atoms. Thus the transition profile,
its localized gradient, and its curvature are approximated with a common
exponential rate.

After mapping the approximation back to physical space, the derivatives
acquire their natural factors of \(\delta^{-1}\) and \(\delta^{-2}\).
Accordingly, we prove the thickness-uniform estimate
\begin{equation}
\begin{aligned}
    &
    \|u_\delta-g_{M,\delta}\|_{L^\infty}
    +
    \delta
    \|\nabla u_\delta-\nabla g_{M,\delta}\|_{L^\infty}
    \\
    &\qquad
    +
    \delta^2
    \|D^2u_\delta-D^2g_{M,\delta}\|_{L^\infty}
    \le
    C\sigma^{-M},
\end{aligned}
    \label{eq:intro_physical_derivative_rate}
\end{equation}
together with an equivalent estimate in a layer-scaled \(H^2\)-type norm.
The constants are independent of the physical thickness \(\delta\),
provided the atom widths satisfy
\[
    \rho_m
    =
    \delta\widehat\rho_m
\]
for order-one stretched widths \(\widehat\rho_m\). In particular,
different groups of atoms can simultaneously resolve exponential and
characteristic layers by using
\[
    \rho_m^E=O(\varepsilon),
    \qquad
    \rho_m^P=O(\sqrt{\varepsilon}).
\]
For a fixed number of separated analytic layer components, the resulting
best-approximation complexity depends logarithmically on the target
accuracy and does not deteriorate with the minimum physical layer
thickness. This statement concerns representation complexity; it does not
assert that optimization iterations, conditioning, or quadrature costs are
independent of \(\varepsilon\).

The same trial space is compatible with both strong and variational
formulations. In a strong residual, the approximation must control
\(u\), \(\nabla u\), and \(D^2u\), and therefore uses the complete
derivative hierarchy in
\eqref{eq:intro_derivative_stable_rate}. In a variational formulation,
integration by parts reduces the required regularity to the transition and
its first derivative. The integrated Cauchy representation therefore
provides a common layer-adapted trial space for strong-form PINNs and
weak-form or hp-VPINN discretizations. The strong-form realization is
referred to as the Layer-Resolving XNet Physics-Informed Neural Network
(LRX-PINN). For variational experiments, the same representation is
embedded into existing stabilized hp-VPINN frameworks while their test
spaces, loss functionals, stabilization terms, quadrature rules, and
training settings are kept unchanged. This permits the effect of the trial
space to be separated from that of residual stabilization.

The numerical study is designed to test the theoretical mechanisms rather
than only final benchmark errors. It includes planar interior and boundary
layers, outflow and corner layers, curved interfaces, and a multiple-scale
problem containing both \(O(\varepsilon)\) exponential and
\(O(\sqrt{\varepsilon})\) characteristic layers. We compare integrated
Cauchy representations with standard tanh, arctangent, original Cauchy,
Fourier-feature, and KAN-based alternatives under matched training
conditions. In addition to global solution errors, we examine layer-region
errors, gradient accuracy, scaled errors as the physical layer thickness
decreases, and error versus the number of trainable parameters. The
variational experiments further determine whether trial-space adaptation
and SUPG-type stabilization provide complementary improvements.

The main contributions of this work are summarized as follows.

\begin{itemize}

\item
\textbf{A geometric integrated Cauchy layer trial space.}
We construct ridge atoms whose parameters explicitly encode the normal
direction, position, and physical width of a layer through
\[
    \boldsymbol n
    =
    \boldsymbol w/\|\boldsymbol w\|,
    \qquad
    s
    =
    -b/\|\boldsymbol w\|,
    \qquad
    \rho
    =
    d/\|\boldsymbol w\|.
\]
The same atom is transition-type at the solution level and generates
localized first- and second-derivative profiles on the physical scale
\(\rho\).

\item
\textbf{Derivative-stable exponential approximation.}
Using a Cauchy integral--trapezoidal quadrature argument, we prove that one
collection of integrated Cauchy atoms simultaneously approximates an
analytic stretched layer profile and its first two derivatives with a
common exponential rate.

\item
\textbf{Layer-thickness-uniform physical-space estimates.}
We establish scaled \(L^\infty\) and \(H^2\)-type approximation bounds
whose constants are independent of the layer thickness. The analysis
covers both \(O(\varepsilon)\) exponential layers and
\(O(\sqrt{\varepsilon})\) characteristic layers and yields
thickness-uniform best-approximation complexity for finite collections of
separated analytic layers.

\item
\textbf{Strong and variational realizations.}
We derive consequences for the strong convection--diffusion residual and
for the corresponding variational form. The same layer-adapted trial space
is implemented in a strong-form LRX-PINN and in stabilized hp-VPINN
frameworks, allowing trial-space effects to be assessed independently of
the residual formulation and stabilization strategy.

\item
\textbf{Numerical validation across layer types and scales.}
Experiments on interior, boundary, outflow, corner, curved, and
multiple-scale layers evaluate solution, gradient, layer-region, and
scaled errors. The results demonstrate that the integrated Cauchy trial
space improves accuracy and parameter efficiency across both strong and
variational formulations.

\end{itemize}

The remainder of the paper is organized as follows.
Section~\ref{sec:layer_trial_spaces} introduces the geometric integrated
Cauchy trial space and establishes the derivative-stable and
layer-thickness-uniform approximation results.
Section~\ref{sec:methodology} describes the strong and variational
realizations.
Section~\ref{sec:numerical_experiments} presents the numerical experiments
and comparisons.
Finally, Section~\ref{sec:conclusion} summarizes the conclusions and
discusses limitations and future directions.

\section{Layer-Thickness-Uniform Integrated Cauchy Approximation}
\label{sec:layer_trial_spaces}

Convection-dominated convection--diffusion problems develop thin and strongly
anisotropic solution structures. Across a layer, the solution undergoes an
$O(1)$ transition, whereas its first and second normal derivatives scale as
inverse powers of the physical layer thickness. Consequently, a trial space
that approximates only the solution values may still provide an inaccurate
representation of either the strong differential residual or the associated
variational form.

This section develops a layer-resolving trial space based on integrated
Cauchy ridge functions. The central point is not merely that the integrated
Cauchy function can be used as a neural activation. Rather, a single ridge
atom simultaneously represents a transition profile, its localized Cauchy
first derivative, and its localized sign-changing curvature. We show that,
for analytic layer profiles in the stretched coordinate, the same collection
of integrated Cauchy atoms approximates the profile and its first two
derivatives at one exponential rate. After rescaling to physical space, this
yields best-approximation estimates whose constants do not deteriorate as
the physical layer becomes thinner. The result applies to both
$O(\varepsilon)$ exponential layers and $O(\sqrt{\varepsilon})$
characteristic layers, with different atom widths matched to the respective
physical scales.

\subsection{Layer structures and integrated Cauchy atoms}
\label{subsec:canonical_layer_structures}

Consider the singularly perturbed convection--diffusion--reaction operator
\begin{equation}
    \mathcal L_\varepsilon u
    :=
    -\varepsilon \Delta u
    +
    \boldsymbol b(\boldsymbol x)\cdot\nabla u
    +
    c(\boldsymbol x)u,
    \qquad 0<\varepsilon\ll1.
    \label{eq:convection_diffusion_operator}
\end{equation}
Away from corners and other geometric singularities, the leading variation
of a thin layer can be described locally by a stretched normal coordinate.
We therefore begin with the canonical profile
\begin{equation}
    u_\delta(\boldsymbol x)
    =
    U(\tau),
    \qquad
    \tau
    :=
    \frac{\boldsymbol n\cdot\boldsymbol x-s}{\delta},
    \qquad
    \|\boldsymbol n\|=1,
    \label{eq:local_layer_profile}
\end{equation}
where $s$ specifies the layer location, $\boldsymbol n$ is its unit normal,
and $\delta>0$ is its physical thickness. The chain rule gives
\begin{equation}
    \nabla u_\delta
    =
    \delta^{-1}U'(\tau)\boldsymbol n,
    \qquad
    D^2u_\delta
    =
    \delta^{-2}U''(\tau)
    \boldsymbol n\otimes\boldsymbol n,
    \qquad
    \Delta u_\delta
    =
    \delta^{-2}U''(\tau).
    \label{eq:layer_gradient_hessian}
\end{equation}
Thus, for an order-one stretched profile,
\begin{equation}
    u_\delta=O(1),
    \qquad
    \nabla u_\delta=O(\delta^{-1}),
    \qquad
    D^2u_\delta=O(\delta^{-2}).
    \label{eq:layer_differential_scaling}
\end{equation}
Substitution into \eqref{eq:convection_diffusion_operator} yields
\begin{equation}
    \mathcal L_\varepsilon u_\delta
    =
    -\frac{\varepsilon}{\delta^2}U''(\tau)
    +
    \frac{\boldsymbol b(\boldsymbol x)\cdot\boldsymbol n}{\delta}
    U'(\tau)
    +
    c(\boldsymbol x)U(\tau).
    \label{eq:operator_on_layer_profile}
\end{equation}
Equation \eqref{eq:operator_on_layer_profile} identifies the three
structures that a layer-resolving trial space should reproduce: the bounded
transition $U$, the localized gradient $U'$, and the localized curvature
$U''$, all on the same physical scale $\delta$.

Two distinct layer scales are relevant for convection-dominated problems.
First, if the normal convection is nonzero, the balance
\begin{equation}
    \frac{\varepsilon}{\delta_E^2}
    \asymp
    \frac{|\boldsymbol b\cdot\boldsymbol n_E|}{\delta_E}
    \label{eq:exponential_layer_balance}
\end{equation}
gives the exponential-layer thickness
\begin{equation}
    \delta_E
    \asymp
    \frac{\varepsilon}{|\boldsymbol b\cdot\boldsymbol n_E|}.
    \label{eq:exponential_layer_width}
\end{equation}
In particular, when $|\boldsymbol b\cdot\boldsymbol n_E|=O(1)$,
$\delta_E=O(\varepsilon)$.

Second, on a characteristic boundary where
$\boldsymbol b\cdot\boldsymbol n_P=0$, the normal diffusion is balanced by
an order-one tangential transport or reaction scale. The resulting
characteristic, or parabolic, layer has thickness
\begin{equation}
    \delta_P\asymp\sqrt{\varepsilon}.
    \label{eq:characteristic_layer_width}
\end{equation}
The analysis below is formulated for a general thickness $\delta$ and is
then applied with $\delta=\delta_E$ and $\delta=\delta_P$. For a
characteristic layer with a slowly varying tangential amplitude, the result
applies to the fast normal profile uniformly in the tangential coordinate,
provided that the corresponding analyticity constants are uniform.

\paragraph{Integrated Cauchy layer atoms.}
\label{subsec:geometric_integrated_cauchy_atoms}

The one-dimensional Cauchy activation and its integrated form are
\begin{equation}
    \phi_{\lambda_1,\lambda_2,d}(z)
    =
    \frac{\lambda_1z+\lambda_2}{z^2+d^2},
    \qquad
    \Phi_{\lambda_1,\lambda_2,d}(z)
    =
    \frac{\lambda_1}{2}\log(z^2+d^2)
    +
    \frac{\lambda_2}{d}
    \arctan\!\left(\frac{z}{d}\right),
    \qquad d>0,
    \label{eq:cauchy_integrated_cauchy}
\end{equation}
with
\begin{equation}
    \Phi'_{\lambda_1,\lambda_2,d}
    =
    \phi_{\lambda_1,\lambda_2,d}.
    \label{eq:integrated_cauchy_derivative}
\end{equation}
Consider one ridge term with output coefficient $\alpha$,
\begin{equation}
    v(\boldsymbol x)
    =
    \alpha
    \Phi_{\lambda_1,\lambda_2,d}
    (\boldsymbol w^\top\boldsymbol x+b),
    \qquad \boldsymbol w\neq0.
    \label{eq:integrated_cauchy_ridge}
\end{equation}
Introduce
\begin{equation}
    \kappa:=\|\boldsymbol w\|,
    \qquad
    \boldsymbol n:=\frac{\boldsymbol w}{\|\boldsymbol w\|},
    \qquad
    s:=-\frac{b}{\|\boldsymbol w\|},
    \qquad
    \rho:=\frac{d}{\|\boldsymbol w\|}.
    \label{eq:geometric_parameters}
\end{equation}
Then
\begin{equation}
    \boldsymbol w^\top\boldsymbol x+b
    =
    \kappa(\boldsymbol n\cdot\boldsymbol x-s),
    \qquad
    d=\kappa\rho.
    \label{eq:affine_geometric_factorization}
\end{equation}
The parameters $\boldsymbol n$, $s$, and $\rho$ represent the layer normal,
location, and physical width, respectively.

\medskip
\noindent
\textbf{Proposition 1 (Geometric factorization).}
Up to an additive constant, the ridge term
\eqref{eq:integrated_cauchy_ridge} can be written as
\begin{equation}
    \Psi_{A,B,\boldsymbol n,s,\rho}(\boldsymbol x)
    =
    A\arctan\!\left(
        \frac{\boldsymbol n\cdot\boldsymbol x-s}{\rho}
    \right)
    +
    \frac{B}{2}
    \log\!\left(
        (\boldsymbol n\cdot\boldsymbol x-s)^2+\rho^2
    \right),
    \label{eq:geometric_layer_atom}
\end{equation}
where
\begin{equation}
    A=\frac{\alpha\lambda_2}{\kappa\rho},
    \qquad
    B=\alpha\lambda_1.
    \label{eq:geometric_amplitudes}
\end{equation}

\noindent
\textit{Proof.}
Using \eqref{eq:affine_geometric_factorization},
\begin{align}
    \alpha\Phi_{\lambda_1,\lambda_2,d}
    (\boldsymbol w^\top\boldsymbol x+b)
    & =
    \alpha\lambda_1\log\kappa
    +
    \frac{\alpha\lambda_1}{2}
    \log\!\left(
        (\boldsymbol n\cdot\boldsymbol x-s)^2+\rho^2
    \right)
    \notag\\
    &\quad+
    \frac{\alpha\lambda_2}{\kappa\rho}
    \arctan\!\left(
        \frac{\boldsymbol n\cdot\boldsymbol x-s}{\rho}
    \right).
    \label{eq:geometric_factorization_proof}
\end{align}
The first term is independent of $\boldsymbol x$ and can be absorbed into
the output bias. \hfill$\square$

The arctangent component carries a bounded transition across the hyperplane
\begin{equation}
    \Gamma_{\boldsymbol n,s}
    :=
    \{\boldsymbol x:\boldsymbol n\cdot\boldsymbol x=s\},
    \label{eq:layer_hyperplane}
\end{equation}
whereas the logarithmic component supplies an additional even rational
mode. In particular, the pure transition atom corresponds to $B=0$.
Let
\begin{equation}
    r(\boldsymbol x)
    :=
    \boldsymbol n\cdot\boldsymbol x-s,
    \qquad
    \tau:=\frac{r(\boldsymbol x)}{\rho}.
    \label{eq:atom_stretched_coordinate}
\end{equation}
Direct differentiation gives
\begin{equation}
    \nabla\Psi_{A,B,\boldsymbol n,s,\rho}
    =
    \rho^{-1}Q_1^{A,B}(\tau)\boldsymbol n,
    \qquad
    Q_1^{A,B}(\tau)
    :=
    \frac{A+B\tau}{1+\tau^2},
    \label{eq:geometric_atom_gradient}
\end{equation}
and
\begin{equation}
    D^2\Psi_{A,B,\boldsymbol n,s,\rho}
    =
    \rho^{-2}Q_2^{A,B}(\tau)
    \boldsymbol n\otimes\boldsymbol n,
    \qquad
    Q_2^{A,B}(\tau)
    :=
    \frac{-2A\tau+B(1-\tau^2)}{(1+\tau^2)^2}.
    \label{eq:geometric_atom_hessian}
\end{equation}
Consequently,
\begin{equation}
    \Delta\Psi_{A,B,\boldsymbol n,s,\rho}
    =
    \rho^{-2}Q_2^{A,B}(\tau).
    \label{eq:geometric_atom_laplacian}
\end{equation}
Thus one atom reproduces the complete differential hierarchy of a thin
layer: its value varies on the scale $\rho$, its gradient is a localized
Cauchy-type profile of magnitude $O(\rho^{-1})$, and its curvature is a
localized Cauchy-derivative profile of magnitude $O(\rho^{-2})$.

For the pure transition component,
\begin{equation}
    \frac{\partial}{\partial r}
    \left[
        A\arctan\!\left(\frac{r}{\rho}\right)
    \right]
    =
    \frac{A\rho}{r^2+\rho^2}.
    \label{eq:pure_transition_cauchy_kernel}
\end{equation}
Its derivative reaches half of its maximum magnitude at $|r|=\rho$, so the
full width at half maximum equals $2\rho$. Hence
\begin{equation}
    \boxed{\rho=\frac{d}{\|\boldsymbol w\|}}
    \label{eq:physical_neural_width}
\end{equation}
is the physical layer width represented by the neuron, rather than the
internal activation scale $d$ alone.

The corresponding geometric trial class is
\begin{equation}
    \mathcal V_M^{\mathrm{IC}}
    :=
    \left\{
        a_0+
        \sum_{m=1}^{M}
        \Psi_{A_m,B_m,\boldsymbol n_m,s_m,\rho_m}
        :
        \|\boldsymbol n_m\|=1,
        \ \rho_m>0
    \right\}.
    \label{eq:geometric_trial_class}
\end{equation}
Applying the differential operator to one atom gives
\begin{equation}
\begin{aligned}
    \mathcal L_\varepsilon
    \Psi_{A,B,\boldsymbol n,s,\rho}
    ={}&
    -\frac{\varepsilon}{\rho^2}Q_2^{A,B}(\tau)
    +
    \frac{\boldsymbol b(\boldsymbol x)\cdot\boldsymbol n}{\rho}
    Q_1^{A,B}(\tau)
    \\
    &+
    c(\boldsymbol x)
    \Psi_{A,B,\boldsymbol n,s,\rho}(\boldsymbol x).
\end{aligned}
    \label{eq:operator_on_geometric_atom}
\end{equation}
Comparison with \eqref{eq:operator_on_layer_profile} shows the structural
correspondence
\begin{equation}
    U\longleftrightarrow\Psi,
    \qquad
    U'\longleftrightarrow Q_1,
    \qquad
    U''\longleftrightarrow Q_2.
    \label{eq:differential_hierarchy_correspondence}
\end{equation}
To match a physical layer of thickness $\delta$, the atom widths should
therefore satisfy
\begin{equation}
    \rho_m\asymp\delta.
    \label{eq:atom_physical_width_matching}
\end{equation}

\subsection{Derivative-stable approximation in the stretched coordinate}
\label{subsec:complex_neighborhood_cauchy_approximation}

The simultaneous control of a layer profile and its derivatives requires a
Cauchy approximation that is uniform in a fixed complex neighborhood, not
only on the real interval. We therefore record the precise
Cauchy-integral--trapezoidal quadrature estimate used below.

\begin{lem}[Cauchy integral--trapezoidal quadrature]
\label{lem:cauchy_trapezoidal_quadrature}
Let $K\subset\mathbb C$ be compact, and let $\Gamma$ be a positively
oriented analytic Jordan curve whose interior $D$ satisfies
\begin{equation}
    K\Subset D.
    \label{eq:compact_inside_cauchy_contour}
\end{equation}
Assume that $\Gamma$ admits a $2\pi$-periodic parametrization
$\gamma:\mathbb R\to\Gamma$ that extends holomorphically to the strip
\begin{equation}
    S_a
    :=
    \{
        \theta\in\mathbb C:
        |\operatorname{Im}\theta|<a
    \}
    \label{eq:analytic_parameter_strip}
\end{equation}
for some $a>0$. Assume that there exists $a_1\in(0,a)$ such that $F$
is holomorphic in an open set containing
$\overline D\cup\gamma(\overline S_{a_1})$, where
$\overline S_{a_1}=\{\theta:|\operatorname{Im}\theta|\le a_1\}$.

Then there exists $a_0\in(0,a_1)$ such that, for every integer $N\ge1$, the
$N$-point trapezoidal Cauchy sum
\begin{equation}
    Q_N(z)
    :=
    \frac{1}{iN}
    \sum_{j=0}^{N-1}
    \frac{
        F(\zeta_j)\gamma'(\theta_j)
    }{
        \zeta_j-z
    },
    \qquad
    \theta_j
    =
    \frac{2\pi(j+\vartheta)}{N},
    \qquad
    \zeta_j=\gamma(\theta_j),
    \label{eq:trapezoidal_cauchy_sum}
\end{equation}
where $\vartheta\in[0,1)$ is any fixed phase shift, satisfies
\begin{equation}
    \sup_{z\in K}
    |F(z)-Q_N(z)|
    \le
    \frac{2H_{a_0}}{e^{a_0N}-1}
    \le
    C_{a_0}e^{-a_0N}.
    \label{eq:uniform_trapezoidal_cauchy_error}
\end{equation}
Here
\begin{equation}
    H_{a_0}
    :=
    \sup_{\substack{z\in K\\
                     |\operatorname{Im}\theta|\le a_0}}
    \left|
        \frac{
            F(\gamma(\theta))\gamma'(\theta)
        }{
            \gamma(\theta)-z
        }
    \right|
    <\infty,
    \label{eq:uniform_cauchy_integrand_bound}
\end{equation}
and the constants are independent of $N$.

Suppose in addition that $K$ and $\Gamma$ are symmetric with respect to the
real axis,
\begin{equation}
    F(\overline z)=\overline{F(z)},
    \qquad
    \gamma(-\theta)=\overline{\gamma(\theta)},
    \label{eq:conjugation_symmetry_assumptions}
\end{equation}
and that the parametrization is chosen so that
$\gamma(\theta)\in\mathbb R$ only for $\theta\in\pi\mathbb Z$.
For an even number $N=2M$, take the half-shifted nodes
$\vartheta=1/2$. They occur in conjugate pairs and none lies on the real
axis. Then $Q_{2M}$ can be written as an $M$-term
real Cauchy sum
\begin{equation}
    Q_{2M}(z)
    =
    \sum_{m=1}^{M}
    \frac{
        \lambda_{1,m}(z-\beta_m)+\lambda_{2,m}
    }{
        (z-\beta_m)^2+\widehat d_m^{\,2}
    },
    \qquad
    \beta_m\in\mathbb R,
    \quad
    \widehat d_m>0,
    \label{eq:paired_real_cauchy_sum}
\end{equation}
and
\begin{equation}
    \sup_{z\in K}
    |F(z)-Q_{2M}(z)|
    \le
    C\sigma^{-M},
    \qquad
    \sigma=e^{2a_0}>1.
    \label{eq:paired_cauchy_exponential_rate}
\end{equation}
\end{lem}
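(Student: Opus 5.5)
By Cauchy's integral formula, for every $z\in K$ we can write
\[
F(z)=\frac{1}{2\pi i}\int_0^{2\pi}g_z(\theta)\,d\theta,
\qquad
g_z(\theta):=\frac{F(\gamma(\theta))\gamma'(\theta)}{\gamma(\theta)-z}.
\]
The sum $Q_N(z)$ is exactly $\frac{1}{2\pi i}$ times the $N$-point (shifted) trapezoidal rule for this integral. Indeed, $\frac{2\pi}{N}\cdot\frac{1}{2\pi i}=\frac{1}{iN}$.

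The plan is therefore to apply the classical exponential convergence theorem for the periodic trapezoidal rule (Trefethen--Weideman). It states: if $g$ is $2\pi$-periodic and holomorphic and bounded by $H$ in $S_{a_0}$, then the error is at most $4\pi H/(e^{a_0N}-1)$. Dividing by $2\pi$ gives the constant $2H_{a_0}$. I will recall its proof briefly, because it also handles an arbitrary phase $\vartheta$. Expand $g$ in a Fourier series. Then shift the contour to $\operatorname{Im}\theta=\pm a_0$, which gives $|\hat g_k|\le He^{-a_0|k|}$. The rule integrates modes $k\notin N\mathbb Z$ exactly, so the error is a sum over aliased modes $k\in N\mathbb Z\setminus\{0\}$, each carrying the unimodular factor $e^{2\pi i k\vartheta/N}$. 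The phase factor does not affect the bound. Summing the geometric series gives $2\sum_{l\ge1}e^{-a_0lN}=2/(e^{a_0N}-1)$.

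The main obstacle is choosing $a_0$ so that $g_z$ is holomorphic and uniformly bounded on $\overline S_{a_0}$ for all $z\in K$.
\begin{itemize}
\item Holomorphy of the numerator: $F\circ\gamma$ and $\gamma'$ are holomorphic on $S_{a_1}\subset S_a$ by hypothesis.
\item Separation of the denominator: $\gamma(\mathbb R)=\Gamma$ has positive distance $\eta$ from the compact set $K\Subset D$. By uniform continuity of $\gamma$ on the compact set $[0,2\pi]\times[-a_1,a_1]$, together with periodicity, there is $a_0\in(0,a_1)$ such that $|\gamma(\theta)-\gamma(\operatorname{Re}\theta)|<\eta/2$ whenever $|\operatorname{Im}\theta|\le a_0$. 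Hence $|\gamma(\theta)-z|\ge\eta/2$ for all such $\theta$ and all $z\in K$.
\item Finiteness of $H_{a_0}$: continuity of $F\circ\gamma$ and $\gamma'$ on the compact closed strip (modulo periodicity), combined with this lower bound, gives $H_{a_0}<\infty$ uniformly in $z$.
\end{itemize}
This also explains why holomorphy of $F$ on $\gamma(\overline S_{a_1})$ is assumed, and not only on $\overline D$.

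For the real pairing, take $N=2M$ and $\vartheta=1/2$, so $\theta_j=\pi(2j+1)/(2M)$. The map $j\mapsto 2M-1-j$ sends $\theta_j$ to $2\pi-\theta_j\equiv-\theta_j$. Using $\gamma(-\theta)=\overline{\gamma(\theta)}$, differentiation gives $\gamma'(-\theta)=-\overline{\gamma'(\theta)}$. No node lies in $\pi\mathbb Z$, so no $\zeta_j$ is real, and the nodes split into $M$ conjugate pairs. Let $c_j=F(\zeta_j)\gamma'(\theta_j)/(iN)$. The partner coefficient is then
\[
\frac{\overline{F(\zeta_j)}\,\bigl(-\overline{\gamma'(\theta_j)}\bigr)}{iN}=\overline{c_j}.
\]
Hence each pair contributes $c/(\zeta-z)+\bar c/(\bar\zeta-z)$. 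Writing $\zeta=\beta+i\widehat d$ with $\widehat d>0$ (choosing the representative in the upper half-plane), this becomes a real rational function
\[
\frac{\lambda_1(z-\beta)+\lambda_2}{(z-\beta)^2+\widehat d^{\,2}},
\qquad
\lambda_1=-2\operatorname{Re}c,\quad \lambda_2=-2\widehat d\,\operatorname{Im}c,
\]
or the analogous expression after this routine algebra. Finally, the error bound at $N=2M$ gives $Ce^{-2a_0M}=C\sigma^{-M}$.
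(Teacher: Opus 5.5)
Your proposal is correct and follows the paper's proof essentially step for step. Both arguments use Cauchy's formula in the parameter $\theta$, choose $a_0$ so that $\gamma(\theta)$ stays uniformly away from $K$ on the closed strip, apply the periodic trapezoidal bound $4\pi H/(e^{a_0N}-1)$, and pair conjugate nodes with conjugate coefficients. You add detail the paper omits: you re-derive the trapezoidal bound via Fourier aliasing, which makes the irrelevance of $\vartheta$ explicit, and you check the conjugate-coefficient claim via $\gamma'(-\theta)=-\overline{\gamma'(\theta)}$. The one slip is a sign in the pairing formula: with $c=p+iq$ one gets $\lambda_2=+2\widehat d\,\operatorname{Im}c=2q\widehat d$ (for example, $c=i$ and $\zeta=i$ give $2/(1+z^2)$). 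This is harmless, since only the reality of $\lambda_1$ and $\lambda_2$ matters.
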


\begin{proof}
Since $K\Subset D$ and $\gamma$ is continuous on the real axis, the
separation
\begin{equation}
    \operatorname{dist}(K,\Gamma)>0
    \label{eq:real_contour_separation}
\end{equation}
holds. By continuity of the holomorphic extension of $\gamma$, one can
choose $a_0\in(0,a_1)$ sufficiently small such that
\begin{equation}
    d_0
    :=
    \inf_{\substack{z\in K\\
                     |\operatorname{Im}\theta|\le a_0}}
    |\gamma(\theta)-z|
    >0.
    \label{eq:complex_strip_contour_separation}
\end{equation}
For each fixed $z\in K$, define the $2\pi$-periodic function
\begin{equation}
    h_z(\theta)
    :=
    \frac{
        F(\gamma(\theta))\gamma'(\theta)
    }{
        \gamma(\theta)-z
    }.
    \label{eq:cauchy_periodic_integrand}
\end{equation}
The assumptions and \eqref{eq:complex_strip_contour_separation} imply that
$h_z$ is holomorphic in $S_{a_0}$ and satisfies
\begin{equation}
    \sup_{z\in K}
    \sup_{|\operatorname{Im}\theta|\le a_0}
    |h_z(\theta)|
    \le
    H_{a_0}.
    \label{eq:periodic_integrand_uniform_bound}
\end{equation}
By the Cauchy integral formula,
\begin{equation}
    F(z)
    =
    \frac{1}{2\pi i}
    \int_0^{2\pi}
    h_z(\theta)\,d\theta,
    \qquad z\in K.
    \label{eq:parametrized_cauchy_formula}
\end{equation}
The standard error bound for the $N$-point trapezoidal rule applied to a
$2\pi$-periodic function analytic and bounded by $H_{a_0}$ in
$S_{a_0}$ gives
\begin{equation}
    \left|
        \int_0^{2\pi}h_z(\theta)\,d\theta
        -
        \frac{2\pi}{N}
        \sum_{j=0}^{N-1}h_z(\theta_j)
    \right|
    \le
    \frac{4\pi H_{a_0}}{e^{a_0N}-1}.
    \label{eq:periodic_trapezoidal_error_bound}
\end{equation}
Dividing by $2\pi$ and using
\eqref{eq:parametrized_cauchy_formula} proves the first inequality in
\eqref{eq:uniform_trapezoidal_cauchy_error}. The second follows by
absorbing the finitely many small values of $N$ into the constant
$C_{a_0}$.

It remains to verify the real Cauchy representation. Under
\eqref{eq:conjugation_symmetry_assumptions}, conjugate quadrature nodes have
conjugate residues. More precisely, the coefficient of the pole
$\zeta_j$ in \eqref{eq:trapezoidal_cauchy_sum} is
\begin{equation}
    c_j
    :=
    \frac{1}{iN}
    F(\zeta_j)\gamma'(\theta_j).
    \label{eq:quadrature_pole_coefficient}
\end{equation}
For one conjugate pair, write
\begin{equation}
    \zeta_m
    =
    \beta_m+i\widehat d_m,
    \qquad
    c_m=p_m+iq_m,
    \qquad
    \widehat d_m>0.
    \label{eq:paired_pole_residue_notation}
\end{equation}
Then
\begin{equation}
\begin{aligned}
    \frac{c_m}{\zeta_m-z}
    +
    \frac{\overline{c_m}}{\overline{\zeta_m}-z}
    & =
    \frac{
        -2p_m(z-\beta_m)
        +2q_m\widehat d_m
    }{
        (z-\beta_m)^2+\widehat d_m^{\,2}
    }.
\end{aligned}
    \label{eq:conjugate_pair_to_real_cauchy_atom}
\end{equation}
Thus each conjugate pair is exactly one real Cauchy atom with
\begin{equation}
    \lambda_{1,m}=-2p_m,
    \qquad
    \lambda_{2,m}=2q_m\widehat d_m.
    \label{eq:real_cauchy_coefficients_from_residue}
\end{equation}
Pairing all $2M$ terms yields \eqref{eq:paired_real_cauchy_sum}. Finally,
setting $N=2M$ in \eqref{eq:uniform_trapezoidal_cauchy_error} gives
\eqref{eq:paired_cauchy_exponential_rate}.
\end{proof}

\begin{remark}
The phase shift in \eqref{eq:trapezoidal_cauchy_sum} does not change the
exponential trapezoidal-rule estimate. The half-shift used in the symmetric
case avoids the real-axis intersections at $\theta=0$ and $\theta=\pi$,
so every paired scale $\widehat d_m$ is strictly positive.
\end{remark}

\begin{remark}
The complex-neighborhood estimate in
Lemma~\ref{lem:cauchy_trapezoidal_quadrature} is essential. A value-only
approximation estimate on the real interval does not, by itself, control
the derivative of the approximation error.
\end{remark}

\paragraph{From Cauchy quadrature to integrated Cauchy approximation.}
\label{subsec:derivative_stable_approximation}

We now prove that one integrated Cauchy representation controls the layer
profile and its first two derivatives with the same exponential rate.

\begin{thm}[Derivative-stable integrated Cauchy approximation]
\label{thm:derivative_stable_integrated_cauchy}
Let $J\subset\mathbb R$ be a compact interval and let $\Omega_0$ be a
bounded domain, symmetric with respect to the real axis, such that
$J\Subset\Omega_0$. Assume that there exists a symmetric analytic Jordan
curve $\Gamma$ with interior $D$ satisfying
$\overline{\Omega_0}\Subset D$, and that $\Gamma$ has an analytic
parametrization satisfying the assumptions of
Lemma~\ref{lem:cauchy_trapezoidal_quadrature}. Suppose further that $U$ is
holomorphic in the corresponding open neighborhood of
$\overline D\cup\gamma(\overline S_{a_1})$ and satisfies
$U(\overline z)=\overline{U(z)}$. Then
there exist constants $C>0$ and $\sigma>1$, independent of $M$, and an
$M$-term integrated Cauchy function
\begin{equation}
\begin{aligned}
    G_M(t)
    =
    U(t_0)
    +
    \sum_{m=1}^{M}
    \Big[
        &\Phi_{\lambda_{1,m},\lambda_{2,m},\widehat d_m}
        (t-\beta_m)
        \\
        &-
        \Phi_{\lambda_{1,m},\lambda_{2,m},\widehat d_m}
        (t_0-\beta_m)
    \Big],
\end{aligned}
    \label{eq:integrated_cauchy_profile_approximation}
\end{equation}
where $t_0\in J$, such that
\begin{equation}
    \sum_{k=0}^{2}
    \|U^{(k)}-G_M^{(k)}\|_{L^\infty(J)}
    \le
    C\sigma^{-M}.
    \label{eq:stretched_coordinate_c2_rate}
\end{equation}
Consequently,
\begin{equation}
    \|U-G_M\|_{H^2(J)}
    \le
    C_J\sigma^{-M},
    \label{eq:stretched_coordinate_h2_rate}
\end{equation}
where $C_J$ is independent of $M$.
\end{thm}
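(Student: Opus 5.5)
We apply Lemma~\ref{lem:cauchy_trapezoidal_quadrature} to $F:=U'$ rather than to $U$. Integrating the resulting Cauchy sum gives $G_M$. The derivative losses are then absorbed by uniform complex-neighbourhood control and Cauchy estimates.

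\emph{Step 1: approximate $U'$.} Since $U$ is holomorphic on a neighbourhood of $\overline D\cup\gamma(\overline S_{a_1})$, so is $F=U'$. Moreover $U(\overline z)=\overline{U(z)}$ gives $F(\overline z)=\overline{F(z)}$. Take $K:=\overline{\Omega_0}$. This set is compact, symmetric, and satisfies $K\Subset D$. Lemma~\ref{lem:cauchy_trapezoidal_quadrature} with $N=2M$ and half-shifted nodes then yields a real $M$-term Cauchy sum
\[
Q_{2M}(z)=\sum_{m=1}^M\phi_{\lambda_{1,m},\lambda_{2,m},\widehat d_m}(z-\beta_m)
\]
with $\sup_{K}|U'-Q_{2M}|\le C\sigma^{-M}$. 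Here we use that $\frac{\lambda_1(z-\beta)+\lambda_2}{(z-\beta)^2+\widehat d^2}$ is exactly $\phi_{\lambda_1,\lambda_2,\widehat d}(z-\beta)$. The bound is uniform on the complex set $\overline{\Omega_0}$, not just on $J$, and this is the key point.

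\emph{Step 2: integrate.} By \eqref{eq:integrated_cauchy_derivative}, the function $G_M$ in \eqref{eq:integrated_cauchy_profile_approximation} satisfies $G_M'=Q_{2M}$ on $\mathbb R$ and $G_M(t_0)=U(t_0)$. Hence for $t\in J$,
\[
U(t)-G_M(t)=\int_{t_0}^{t}(U'-Q_{2M})(s)\,ds,
\]
so $\|U-G_M\|_{L^\infty(J)}\le|J|\,C\sigma^{-M}$. The $k=1$ term is $\|U'-Q_{2M}\|_{L^\infty(J)}\le C\sigma^{-M}$ directly, since $J\subset K$.

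\emph{Step 3: second derivative, the main obstacle.} The real-line bound does not control $U''-Q_{2M}'$, so we use the complex neighbourhood. Since $J$ is compact and $J\Subset\Omega_0$ with $\Omega_0$ open, there is $r>0$ with $\{z:\operatorname{dist}(z,J)\le r\}\subset\Omega_0$. The function $E:=U'-Q_{2M}$ is holomorphic there: the poles $\beta_m\pm i\widehat d_m=\zeta_j\in\Gamma$ lie outside $\overline{\Omega_0}$. The Cauchy estimate on discs of radius $r$ centred at points of $J$ gives
\[
\|E'\|_{L^\infty(J)}\le r^{-1}\sup_{\overline{\Omega_0}}|E|\le r^{-1}C\sigma^{-M}.
\]
One subtlety needs checking: the lemma's bound holds on all of $K=\overline{\Omega_0}$, including nonreal points. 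This works because the pole-free distance $d_0>0$ in \eqref{eq:complex_strip_contour_separation} was taken over all $z\in K$.

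\emph{Step 4: sum and pass to $H^2$.} Summing the three bounds gives \eqref{eq:stretched_coordinate_c2_rate} with a constant depending on $|J|$, $r$ and $H_{a_0}$, but not on $M$. Finally, $\|g\|_{L^2(J)}\le|J|^{1/2}\|g\|_{L^\infty(J)}$ applied to $k=0,1,2$ gives \eqref{eq:stretched_coordinate_h2_rate} with $C_J=|J|^{1/2}C$.
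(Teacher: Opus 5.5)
Your proposal is correct and follows essentially the same route as the paper's proof. You apply Lemma~\ref{lem:cauchy_trapezoidal_quadrature} to $F=U'$ on the complex set $\overline{\Omega_0}$, integrate the resulting real Cauchy sum to obtain $G_M$ with $G_M(t_0)=U(t_0)$, bound the value error by integrating along $J$, and control the second derivative by a Cauchy derivative estimate using $J\Subset\Omega_0$. Your explicit choices of $K=\overline{\Omega_0}$ and the radius $r$, and your check that the poles $\zeta_j\in\Gamma$ lie outside $\overline{\Omega_0}$, make precise details the paper leaves implicit.
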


\begin{proof}
Apply Lemma~\ref{lem:cauchy_trapezoidal_quadrature} to the
holomorphic derivative profile $F=U'$. There exists an $M$-term real Cauchy sum $R_M$ of
the form \eqref{eq:paired_real_cauchy_sum} satisfying
\begin{equation}
    \|U'-R_M\|_{L^\infty(\Omega_0)}
    \le
    C_1\sigma^{-M}.
    \label{eq:derivative_complex_approximation}
\end{equation}
Define $G_M$ by \eqref{eq:integrated_cauchy_profile_approximation}. Then
\begin{equation}
    G_M'=R_M,
    \qquad
    G_M(t_0)=U(t_0).
    \label{eq:integrated_derivative_identity}
\end{equation}
Hence
\begin{equation}
    \|U'-G_M'\|_{L^\infty(J)}
    \le
    C_1\sigma^{-M}.
    \label{eq:first_derivative_profile_rate}
\end{equation}
Moreover,
\begin{equation}
    U(t)-G_M(t)
    =
    \int_{t_0}^{t}
    \bigl(U'(\zeta)-R_M(\zeta)\bigr)\,d\zeta,
    \label{eq:value_error_integral_identity}
\end{equation}
so that
\begin{equation}
    \|U-G_M\|_{L^\infty(J)}
    \le
    |J|C_1\sigma^{-M}.
    \label{eq:value_profile_rate}
\end{equation}
Finally, $U'-R_M$ is holomorphic in $\Omega_0$. Since
$J\Subset\Omega_0$, Cauchy's derivative estimate gives
\begin{equation}
\begin{aligned}
    \|U''-G_M''\|_{L^\infty(J)}
    & =
    \|(U'-R_M)'\|_{L^\infty(J)}
    \\
    &\le
    C_{J,\Omega_0}
    \|U'-R_M\|_{L^\infty(\Omega_0)}
    \le
    C_2\sigma^{-M}.
\end{aligned}
    \label{eq:second_derivative_profile_rate}
\end{equation}
Combining \eqref{eq:first_derivative_profile_rate},
\eqref{eq:value_profile_rate}, and
\eqref{eq:second_derivative_profile_rate} proves
\eqref{eq:stretched_coordinate_c2_rate}. The $H^2(J)$ estimate follows
because $J$ is bounded.
\end{proof}

Theorem~\ref{thm:derivative_stable_integrated_cauchy} is the central
one-dimensional approximation result. The Cauchy functions approximate the
localized derivative profile $U'$, their antiderivatives approximate the
transition profile $U$, and holomorphic stability transfers the same
exponential rate to the curvature profile $U''$. Importantly, these are not
three independently fitted approximations: $G_M$, $G_M'$, and $G_M''$ are
generated by the same atom locations, scales, and coefficients.

\subsection{Thickness-uniform approximation and PDE consequences}
\label{subsec:layer_scaled_physical_approximation}

Let
\begin{equation}
    \Gamma
    \subset
    \{\boldsymbol x:\boldsymbol n\cdot\boldsymbol x=s\}
    \label{eq:layer_cross_section}
\end{equation}
be a bounded $(d-1)$-dimensional cross-section, and define the layer patch
\begin{equation}
    D_\delta
    :=
    \left\{
        \boldsymbol y+\delta t\boldsymbol n:
        \boldsymbol y\in\Gamma,
        \ t\in J
    \right\}.
    \label{eq:physical_layer_patch}
\end{equation}
Consider
\begin{equation}
    u_\delta(\boldsymbol x)
    =
    U\!\left(
        \frac{\boldsymbol n\cdot\boldsymbol x-s}{\delta}
    \right),
    \qquad \boldsymbol x\in D_\delta,
    \label{eq:physical_layer_target}
\end{equation}
and define
\begin{equation}
    g_{M,\delta}(\boldsymbol x)
    =
    G_M\!\left(
        \frac{\boldsymbol n\cdot\boldsymbol x-s}{\delta}
    \right).
    \label{eq:physical_integrated_cauchy_approximation}
\end{equation}

\begin{thm}[Uniform value--gradient--curvature approximation]
\label{thm:uniform_value_gradient_curvature}
Under the assumptions of
Theorem~\ref{thm:derivative_stable_integrated_cauchy},
\begin{equation}
\begin{aligned}
    &\|u_\delta-g_{M,\delta}\|_{L^\infty(D_\delta)}
    +
    \delta
    \|\nabla u_\delta-\nabla g_{M,\delta}\|_{L^\infty(D_\delta)}
    \\
    &\qquad+
    \delta^2
    \|D^2u_\delta-D^2g_{M,\delta}\|_{L^\infty(D_\delta)}
    \le
    C\sigma^{-M},
\end{aligned}
    \label{eq:uniform_c2_physical_rate}
\end{equation}
where $C$ and $\sigma$ are independent of $M$ and $\delta$.
\end{thm}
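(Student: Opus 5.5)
The proof will be a direct rescaling argument: Theorem~\ref{thm:derivative_stable_integrated_cauchy} is applied once in the stretched coordinate, and the physical-space estimate is obtained by the chain rule. Take $G_M$ to be the $M$-term integrated Cauchy function provided by Theorem~\ref{thm:derivative_stable_integrated_cauchy}, with constants $C$ and $\sigma$ from \eqref{eq:stretched_coordinate_c2_rate}. Define $g_{M,\delta}$ by \eqref{eq:physical_integrated_cauchy_approximation}. Since $U$, $J$, $\Omega_0$ and $\Gamma$ do not depend on $\delta$, neither do these constants. This is the point that will deliver uniformity in $\delta$.

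The first step is to identify the range of the stretched coordinate on the patch. For $\boldsymbol x=\boldsymbol y+\delta t\boldsymbol n\in D_\delta$ with $\boldsymbol y\in\Gamma$, we have $\boldsymbol n\cdot\boldsymbol y=s$ and $\|\boldsymbol n\|=1$. Hence $\tau(\boldsymbol x)=(\boldsymbol n\cdot\boldsymbol x-s)/\delta=t\in J$, so $\tau$ maps $D_\delta$ onto $J$. It follows that $\sup_{D_\delta}|u_\delta-g_{M,\delta}|=\|U-G_M\|_{L^\infty(J)}$.

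The second step applies the chain rule, exactly as in \eqref{eq:layer_gradient_hessian}, to the difference $e:=U-G_M$, which is smooth on $J$. This gives
\[
\nabla(u_\delta-g_{M,\delta})=\delta^{-1}e'(\tau)\boldsymbol n,\qquad
D^2(u_\delta-g_{M,\delta})=\delta^{-2}e''(\tau)\,\boldsymbol n\otimes\boldsymbol n.
\]
Because $\|\boldsymbol n\|=1$, the Euclidean norm of the first and the operator norm of the second equal $\delta^{-1}|e'(\tau)|$ and $\delta^{-2}|e''(\tau)|$, respectively. A Frobenius-norm convention would give the same result, since $\|\boldsymbol n\otimes\boldsymbol n\|_F=1$. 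Multiplying by $\delta$ and $\delta^2$ cancels these factors exactly. The left side of \eqref{eq:uniform_c2_physical_rate} therefore equals $\sum_{k=0}^2\|e^{(k)}\|_{L^\infty(J)}$, which is at most $C\sigma^{-M}$.

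There is no analytic obstacle. The only care needed is to check that the scaled norms contain exactly the powers of $\delta$ produced by differentiation, and that the cross-section $\Gamma$ does not enter the $L^\infty$ bounds. I would also note that $g_{M,\delta}$ lies in $\mathcal V_M^{\mathrm{IC}}$ with widths $\rho_m=\delta\widehat d_m$ and locations $s_m=s+\delta\beta_m$. To see this, write
\[
\Phi_{\lambda_1,\lambda_2,\widehat d}\bigl(\tau-\beta\bigr)=\Phi\bigl(\delta^{-1}(\boldsymbol n\cdot\boldsymbol x-s-\delta\beta)\bigr),
\]
which is a ridge term with $\boldsymbol w=\boldsymbol n/\delta$ and $d=\widehat d$, so $\rho=\delta\widehat d$. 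Then apply Proposition~1.
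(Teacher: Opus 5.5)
Your proposal is correct and follows the paper's proof. You set $E_M=U-G_M$ in the stretched variable $t=(\boldsymbol n\cdot\boldsymbol x-s)/\delta$, use the chain rule to obtain $\nabla(u_\delta-g_{M,\delta})=\delta^{-1}E_M'(t)\boldsymbol n$ and $D^2(u_\delta-g_{M,\delta})=\delta^{-2}E_M''(t)\,\boldsymbol n\otimes\boldsymbol n$, and invoke \eqref{eq:stretched_coordinate_c2_rate}, whose constants do not involve $\delta$. Your extra checks are correct and make explicit what the paper leaves implicit: that $t$ ranges over $J$ on $D_\delta$, that $\|\boldsymbol n\otimes\boldsymbol n\|=1$, and the identification $\rho_m=\delta\widehat d_m$, $s_m=s+\delta\beta_m$, which the paper records in Theorem~\ref{thm:layer_scaled_exponential_approximation}.
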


\begin{proof}
Let
\begin{equation}
    t
    =
    \frac{\boldsymbol n\cdot\boldsymbol x-s}{\delta},
    \qquad
    E_M(t):=U(t)-G_M(t).
    \label{eq:physical_stretched_error}
\end{equation}
Then
\begin{equation}
    u_\delta-g_{M,\delta}=E_M(t),
    \qquad
    \nabla(u_\delta-g_{M,\delta})
    =
    \delta^{-1}E_M'(t)\boldsymbol n,
    \label{eq:physical_error_gradient}
\end{equation}
and
\begin{equation}
    D^2(u_\delta-g_{M,\delta})
    =
    \delta^{-2}E_M''(t)
    \boldsymbol n\otimes\boldsymbol n.
    \label{eq:physical_error_hessian}
\end{equation}
The result follows directly from
\eqref{eq:stretched_coordinate_c2_rate}.
\end{proof}

For an $L^2$ formulation, introduce the normal layer-scaled norm
\begin{equation}
\begin{aligned}
    \|v\|_{\mathcal X_\delta(D_\delta)}^2
    :=
    &\,
    \delta^{-1}\|v\|_{L^2(D_\delta)}^2
    +
    \delta
    \|\partial_{\boldsymbol n}v\|_{L^2(D_\delta)}^2
    \\
    &+
    \delta^3
    \|\partial_{\boldsymbol n\boldsymbol n}v\|_{L^2(D_\delta)}^2.
\end{aligned}
    \label{eq:layer_scaled_norm}
\end{equation}

\begin{thm}[Layer-scaled exponential approximation]
\label{thm:layer_scaled_exponential_approximation}
Under the assumptions of
Theorem~\ref{thm:derivative_stable_integrated_cauchy},
\begin{equation}
    \|u_\delta-g_{M,\delta}\|_{\mathcal X_\delta(D_\delta)}
    \le
    C_\Gamma\sigma^{-M},
    \label{eq:layer_scaled_exponential_rate}
\end{equation}
where $C_\Gamma$ and $\sigma$ are independent of both $M$ and $\delta$.
If the $m$th stretched-coordinate atom has center $\beta_m$ and scale
$\widehat d_m$, then the corresponding physical atom has center and width
\begin{equation}
    s_m=s+\delta\beta_m,
    \qquad
    \rho_m=\delta\widehat d_m.
    \label{eq:stretched_to_physical_atom_geometry}
\end{equation}
\end{thm}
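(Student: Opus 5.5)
The plan is a change of variables in the layer patch $D_\delta$, reducing everything to the one-dimensional bound \eqref{eq:stretched_coordinate_c2_rate} of Theorem~\ref{thm:derivative_stable_integrated_cauchy}. Write $E_M:=U-G_M$ as in \eqref{eq:physical_stretched_error}. I will use coordinates $\boldsymbol x=\boldsymbol y+\delta t\boldsymbol n$ with $\boldsymbol y\in\Gamma$ and $t\in J$. This map is affine with Jacobian $\delta$ relative to the product measure $d\mathcal H^{d-1}(\boldsymbol y)\,dt$. Since $\boldsymbol n\cdot\boldsymbol y=s$ on $\Gamma$, the stretched coordinate equals $t$ exactly. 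Hence
\[
u_\delta-g_{M,\delta}=E_M(t),\qquad
\partial_{\boldsymbol n}(u_\delta-g_{M,\delta})=\delta^{-1}E_M'(t),\qquad
\partial_{\boldsymbol n\boldsymbol n}(u_\delta-g_{M,\delta})=\delta^{-2}E_M''(t),
\]
exactly as in \eqref{eq:physical_error_gradient}--\eqref{eq:physical_error_hessian}.

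Next I compute each weighted term of \eqref{eq:layer_scaled_norm} by Fubini. Let $|\Gamma|$ denote the $(d-1)$-measure of the cross-section. Then
\[
\delta^{-1}\|v\|_{L^2(D_\delta)}^2=|\Gamma|\,\|E_M\|_{L^2(J)}^2,
\qquad
\delta\|\partial_{\boldsymbol n}v\|_{L^2(D_\delta)}^2=\delta\cdot\delta\cdot\delta^{-2}|\Gamma|\,\|E_M'\|_{L^2(J)}^2,
\]
and similarly the third term equals $\delta^3\cdot\delta\cdot\delta^{-4}|\Gamma|\,\|E_M''\|_{L^2(J)}^2$. Every power of $\delta$ cancels. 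This yields the identity
\[
\|u_\delta-g_{M,\delta}\|_{\mathcal X_\delta(D_\delta)}^2=|\Gamma|\,\|E_M\|_{H^2(J)}^2 .
\]
Then \eqref{eq:stretched_coordinate_h2_rate} gives the bound with $C_\Gamma=|\Gamma|^{1/2}C_J$, and $\sigma$ is the same as in Theorem~\ref{thm:derivative_stable_integrated_cauchy}. Neither constant involves $\delta$, since $G_M$ and $J$ do not depend on $\delta$. Alternatively, one can bound each $L^2(J)$ norm by $|J|^{1/2}$ times the $L^\infty$ norm, using \eqref{eq:stretched_coordinate_c2_rate}.

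For the atom geometry \eqref{eq:stretched_to_physical_atom_geometry}, I will expand one term of $g_{M,\delta}$:
\[
\Phi_{\lambda_{1,m},\lambda_{2,m},\widehat d_m}\!\left(\frac{\boldsymbol n\cdot\boldsymbol x-s}{\delta}-\beta_m\right),
\]
which is a ridge term with $\boldsymbol w=\boldsymbol n/\delta$, $b=-(s+\delta\beta_m)/\delta$ and $d=\widehat d_m$. Then \eqref{eq:geometric_parameters} gives $\kappa=\delta^{-1}$, normal $\boldsymbol n$, location $-b/\kappa=s+\delta\beta_m$, and width $\rho_m=d/\kappa=\delta\widehat d_m$. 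The subtracted constants $\Phi(t_0-\beta_m)$ and the $\log\kappa$ term of Proposition~1 are absorbed into the output bias $a_0$. Consequently $g_{M,\delta}\in\mathcal V_M^{\mathrm{IC}}$.

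The only point needing care is the measure-theoretic bookkeeping. The patch $D_\delta$ must be a genuine product under the map $(\boldsymbol y,t)\mapsto\boldsymbol y+\delta t\boldsymbol n$, and the map must be injective; this holds because $\boldsymbol n$ is normal to the flat cross-section. Once that is checked, the cancellation of the $\delta$ weights is exact, and the rest is immediate.
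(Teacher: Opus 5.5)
Your proposal is correct and takes the same route as the paper. You use the change of variables $\boldsymbol x=\boldsymbol y+\delta t\boldsymbol n$ with Jacobian $\delta$, derive the exact identity $\|u_\delta-g_{M,\delta}\|_{\mathcal X_\delta(D_\delta)}=|\Gamma|^{1/2}\|U-G_M\|_{H^2(J)}$, and read off $s_m=s+\delta\beta_m$, $\rho_m=\delta\widehat d_m$ from the affine argument $t-\beta_m$; your identification of $\boldsymbol w=\boldsymbol n/\delta$, $b=-(s+\delta\beta_m)/\delta$, with the constants absorbed into $a_0$, just spells out that last step in more detail.
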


\begin{proof}
Under the parametrization
\begin{equation}
    \boldsymbol x
    =
    \boldsymbol y+\delta t\boldsymbol n,
    \qquad
    \boldsymbol y\in\Gamma,
    \quad t\in J,
    \label{eq:layer_patch_parametrization}
\end{equation}
the volume element is
$d\boldsymbol x=\delta\,d\sigma(\boldsymbol y)\,dt$. Therefore,
\begin{equation}
\begin{aligned}
    \delta^{-1}
    \|u_\delta-g_{M,\delta}\|_{L^2(D_\delta)}^2
    & =
    |\Gamma|\|E_M\|_{L^2(J)}^2,
    \\
    \delta
    \|\partial_{\boldsymbol n}(u_\delta-g_{M,\delta})\|_{L^2(D_\delta)}^2
    & =
    |\Gamma|\|E_M'\|_{L^2(J)}^2,
    \\
    \delta^3
    \|\partial_{\boldsymbol n\boldsymbol n}(u_\delta-g_{M,\delta})\|_{L^2(D_\delta)}^2
    & =
    |\Gamma|\|E_M''\|_{L^2(J)}^2.
\end{aligned}
    \label{eq:scaled_norm_identity_components}
\end{equation}
Hence
\begin{equation}
    \|u_\delta-g_{M,\delta}\|_{\mathcal X_\delta(D_\delta)}
    =
    |\Gamma|^{1/2}
    \|U-G_M\|_{H^2(J)},
    \label{eq:scaled_norm_exact_identity}
\end{equation}
and \eqref{eq:layer_scaled_exponential_rate} follows from
\eqref{eq:stretched_coordinate_h2_rate}. Finally,
\begin{equation}
    t-\beta_m
    =
    \frac{\boldsymbol n\cdot\boldsymbol x-(s+\delta\beta_m)}{\delta},
    \label{eq:physical_center_mapping}
\end{equation}
so the physical center and width are given by
\eqref{eq:stretched_to_physical_atom_geometry}.
\end{proof}

For a prescribed tolerance $\eta>0$, it is sufficient to choose
\begin{equation}
    M
    \ge
    \frac{\log(C_\Gamma/\eta)}{\log\sigma}.
    \label{eq:layer_complexity_bound}
\end{equation}
Thus the number of layer atoms is logarithmic in the target accuracy and,
at the level of best approximation, independent of the inverse physical
layer thickness $\delta^{-1}$.

\paragraph{Exponential and characteristic layers.}
\label{subsec:exponential_characteristic_layers}

The preceding result applies to both principal layer scales of a
convection-dominated problem.

\medskip
\noindent
\textbf{Corollary 1 (Exponential layers).}
Let
\begin{equation}
    u_\varepsilon^E(\boldsymbol x)
    =
    U_E\!\left(
        \frac{\boldsymbol n_E\cdot\boldsymbol x-s_E}{\delta_E}
    \right),
    \qquad
    \delta_E
    \asymp
    \frac{\varepsilon}{|\boldsymbol b\cdot\boldsymbol n_E|},
    \label{eq:exponential_layer_profile}
\end{equation}
where $U_E$ satisfies the assumptions of
Theorem~\ref{thm:derivative_stable_integrated_cauchy}. Then there exists an
$M$-term integrated Cauchy approximation $g_{M,\varepsilon}^E$ such that
\begin{equation}
    \|u_\varepsilon^E-g_{M,\varepsilon}^E\|_{\mathcal X_{\delta_E}}
    \le
    C_E\sigma_E^{-M},
    \label{eq:exponential_layer_uniform_rate}
\end{equation}
with constants independent of $\varepsilon$. The effective widths satisfy
\begin{equation}
    \rho_m^E=\delta_E\widehat d_m=O(\varepsilon)
    \label{eq:exponential_atom_widths}
\end{equation}
whenever $|\boldsymbol b\cdot\boldsymbol n_E|=O(1)$.

\medskip
\noindent
\textbf{Corollary 2 (Characteristic layers).}
Let
\begin{equation}
    u_\varepsilon^P(\boldsymbol x)
    =
    U_P\!\left(
        \frac{\boldsymbol n_P\cdot\boldsymbol x-s_P}{\sqrt\varepsilon}
    \right),
    \qquad
    \boldsymbol b\cdot\boldsymbol n_P=0,
    \label{eq:characteristic_layer_profile}
\end{equation}
where $U_P$ satisfies the assumptions of
Theorem~\ref{thm:derivative_stable_integrated_cauchy}. Then there exists an
$M$-term integrated Cauchy approximation $g_{M,\varepsilon}^P$ such that
\begin{equation}
    \|u_\varepsilon^P-g_{M,\varepsilon}^P\|_{\mathcal X_{\sqrt\varepsilon}}
    \le
    C_P\sigma_P^{-M},
    \label{eq:characteristic_layer_uniform_rate}
\end{equation}
with constants independent of $\varepsilon$. The effective widths satisfy
\begin{equation}
    \rho_m^P=\sqrt\varepsilon\,\widehat d_m
    =O(\sqrt\varepsilon).
    \label{eq:characteristic_atom_widths}
\end{equation}

The two corollaries show that one integrated Cauchy family accommodates two
different physical scales through different learned or prescribed values of
$\rho$. The method does not impose the incorrect universal rule
$\rho=O(\varepsilon)$: exponential layers require
$\rho=O(\varepsilon)$, whereas characteristic layers require
$\rho=O(\sqrt\varepsilon)$.

\paragraph{Coexisting layers and representation complexity.}
\label{subsec:coexisting_layers_complexity}

Consider a solution decomposition on mutually separated local patches,
\begin{equation}
\begin{aligned}
    u_\varepsilon
    =
    u_\varepsilon^{\mathrm{reg}}
    &+
    \sum_{\ell=1}^{N_E}
    U_{E,\ell}\!\left(
        \frac{\boldsymbol n_{E,\ell}\cdot\boldsymbol x-s_{E,\ell}}
             {\delta_{E,\ell}}
    \right)
    \\
    &+
    \sum_{q=1}^{N_P}
    U_{P,q}\!\left(
        \frac{\boldsymbol n_{P,q}\cdot\boldsymbol x-s_{P,q}}
             {\delta_{P,q}}
    \right),
\end{aligned}
    \label{eq:coexisting_layer_decomposition}
\end{equation}
where
\begin{equation}
    \delta_{E,\ell}=O(\varepsilon),
    \qquad
    \delta_{P,q}=O(\sqrt\varepsilon),
    \label{eq:coexisting_layer_scales}
\end{equation}
and $N_E$ and $N_P$ are independent of $\varepsilon$. Assume that all
stretched profiles satisfy the analyticity assumptions above with uniform
constants. Approximating each layer component separately and adding the
results gives the following consequence.

\begin{thm}[Thickness-independent layer complexity]
\label{thm:thickness_independent_layer_complexity}
For every tolerance $\eta>0$, the layer part of
\eqref{eq:coexisting_layer_decomposition} admits an integrated Cauchy
approximation with total number of atoms
\begin{equation}
    M_{\mathrm{layer}}
    \le
    C_0(N_E+N_P)
    \log\!\left(\frac{C_1}{\eta}\right),
    \label{eq:total_layer_complexity}
\end{equation}
such that the sum of the corresponding local layer-scaled errors is at most
$\eta$. The constants $C_0$ and $C_1$ are independent of
$\min_\ell\delta_{E,\ell}$,
$\min_q\delta_{P,q}$, and hence independent of the physical layer
thicknesses.
\end{thm}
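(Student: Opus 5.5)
The argument is a superposition of the single-layer result. I will apply Theorem~\ref{thm:layer_scaled_exponential_approximation} to each of the $N_E+N_P$ layer components separately, with a common number of atoms per component, and then sum the resulting approximations.

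\textbf{Step 1: uniform per-component constants.} Fix a component, say $U_{E,\ell}$ on its patch $D_{\delta_{E,\ell}}$ with cross-section $\Gamma_{E,\ell}$. The standing hypothesis is that all stretched profiles satisfy the analyticity assumptions of Theorem~\ref{thm:derivative_stable_integrated_cauchy} with uniform constants. I read this as follows: the domains $J\Subset\Omega_0\Subset D$, the contour $\Gamma$, the strip widths $a_1,a_0$, and a bound on $|U|$ over the relevant neighbourhood can all be chosen independently of $\ell,q$.

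Under that reading, Lemma~\ref{lem:cauchy_trapezoidal_quadrature} gives a common $\sigma=e^{2a_0}>1$ and a common $H_{a_0}$. The constants $C_1$ and $C_2$ in the proof of Theorem~\ref{thm:derivative_stable_integrated_cauchy} are also common, since they depend only on $H_{a_0}$, $|J|$, and $\operatorname{dist}(J,\partial\Omega_0)$. Finally, $C_\Gamma=|\Gamma|^{1/2}C_J$ is bounded by a common $\widehat C$, provided the cross-section measures $|\Gamma_{E,\ell}|$ and $|\Gamma_{P,q}|$ are bounded, which holds because the domain is bounded. None of these constants involves $\delta$: by \eqref{eq:scaled_norm_exact_identity}, the scaled norm reduces exactly to the stretched $H^2(J)$ error.

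\textbf{Step 2: choice of $M$ and summation.} Let $K:=N_E+N_P$ and apply the single-layer estimate to each component with the same $M$. This produces approximations $g^{E,\ell}$ and $g^{P,q}$ with atoms of physical widths $\delta_{E,\ell}\widehat d_m$ and $\delta_{P,q}\widehat d_m$, as in \eqref{eq:stretched_to_physical_atom_geometry}. Each local error then satisfies
\[
\|\cdot\|_{\mathcal X_{\delta}}\le \widehat C\sigma^{-M},
\]
so the sum of the local layer-scaled errors is at most $K\widehat C\sigma^{-M}$. Following \eqref{eq:layer_complexity_bound}, it suffices to take
\[
M=\left\lceil \frac{\log(K\widehat C/\eta)}{\log\sigma}\right\rceil .
\]

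The total atom count is then $KM\le K\bigl(1+\log(K\widehat C/\eta)/\log\sigma\bigr)$. To reach the form \eqref{eq:total_layer_complexity}, I take $C_0=2/\log\sigma$ and $C_1=\max(K\widehat C,e^{\log\sigma})$. With this $C_1$ we have $\log(C_1/\eta)\ge\log\sigma$ whenever $\eta\le1$, so the additive $K$ is absorbed into $\tfrac{1}{\log\sigma}K\log(C_1/\eta)$; the bounded range $\eta>1$ is handled by enlarging $C_1$. The constants depend on $K$, but $K$ is independent of $\varepsilon$ by hypothesis. They do not depend on any $\delta_{E,\ell}$ or $\delta_{P,q}$.

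\textbf{Main obstacle.} The delicate point is Step 1: making precise that every constant in the chain Lemma~\ref{lem:cauchy_trapezoidal_quadrature} $\to$ Theorem~\ref{thm:derivative_stable_integrated_cauchy} $\to$ Theorem~\ref{thm:layer_scaled_exponential_approximation} depends only on the uniform analyticity data and on $|\Gamma|$. The statement also measures the error only locally, as a sum of layer-scaled errors over the separated patches. So I will not need to control the tails of one component's integrated Cauchy atoms on another component's patch. Those tails are generally not small, since the logarithmic parts grow and the arctangent parts tend to nonzero constants. Such cross-interaction would matter only for a global error bound, and that is outside the claim as stated.
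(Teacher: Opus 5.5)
Your proposal is correct and follows the paper's proof: you apply Theorem~\ref{thm:layer_scaled_exponential_approximation} to each of the $N_E+N_P$ components with a common $M$, which is equivalent to the paper's per-component tolerance $\eta/(N_E+N_P)$, and then add up finitely many atom counts. Your Step~1 spells out the uniformity of constants that the paper only assumes. The one loose end in your constant bookkeeping is inherited from the statement itself: the range $\eta>1$ is not bounded, and for $\eta>C_1$ the right-hand side of \eqref{eq:total_layer_complexity} is negative. This is fixed by restricting to $\eta\le\eta_0$ or by reading the logarithm as its positive part.
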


\begin{proof}
Apply Theorem~\ref{thm:layer_scaled_exponential_approximation} to every
layer profile, with tolerance $\eta/(N_E+N_P)$. Each component requires at
most $C\log(C'(N_E+N_P)/\eta)$ atoms. Summing over the finite number of
layers yields \eqref{eq:total_layer_complexity}.
\end{proof}

Theorem~\ref{thm:thickness_independent_layer_complexity} is a
best-approximation statement. It does not claim that optimization
iterations, conditioning, or quadrature cost are independent of
$\varepsilon$. Those issues concern the realization of the trial space and
must be assessed separately.

\paragraph{Consequences for strong and variational formulations.}
\label{subsec:strong_variational_consequences}

We finally connect the approximation result to the two residual
formulations used in the numerical method.

\medskip\noindent\textit{Strong residual.}

Let
\begin{equation}
    e_{M,\delta}
    :=
    u_\delta-g_{M,\delta}.
    \label{eq:layer_approximation_error}
\end{equation}
Since $e_{M,\delta}=E_M(t)$ on the flat layer patch,
\begin{equation}
    \nabla e_{M,\delta}
    =
    \delta^{-1}E_M'(t)\boldsymbol n,
    \qquad
    \Delta e_{M,\delta}
    =
    \delta^{-2}E_M''(t).
    \label{eq:layer_error_derivatives}
\end{equation}

\begin{thm}[Layer-scaled strong-residual estimate]
\label{thm:layer_scaled_strong_residual}
Assume that $\boldsymbol b,c\in L^\infty(D_\delta)$. Then
\begin{equation}
\begin{aligned}
    \delta^{1/2}
    \|\mathcal L_\varepsilon e_{M,\delta}\|_{L^2(D_\delta)}
    \le
    |\Gamma|^{1/2}
    \Big(
        &\frac{\varepsilon}{\delta}
        \|E_M''\|_{L^2(J)}
        \\
        &+
        \|\boldsymbol b\cdot\boldsymbol n\|_{L^\infty(D_\delta)}
        \|E_M'\|_{L^2(J)}
        \\
        &+
        \delta
        \|c\|_{L^\infty(D_\delta)}
        \|E_M\|_{L^2(J)}
    \Big).
\end{aligned}
    \label{eq:normalized_layer_residual_estimate}
\end{equation}
Consequently,
\begin{equation}
\begin{aligned}
    \delta^{1/2}
    \|\mathcal L_\varepsilon
    (u_\delta-g_{M,\delta})\|_{L^2(D_\delta)}
    \le
    C
    \left(
        \frac{\varepsilon}{\delta}
        +
        \|\boldsymbol b\cdot\boldsymbol n\|_{L^\infty(D_\delta)}
        +
        \delta\|c\|_{L^\infty(D_\delta)}
    \right)
    \sigma^{-M}.
\end{aligned}
    \label{eq:operator_scaled_exponential_rate}
\end{equation}
\end{thm}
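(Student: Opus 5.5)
The plan is to expand $\mathcal L_\varepsilon e_{M,\delta}$ pointwise on the flat patch with the chain-rule identities \eqref{eq:layer_error_derivatives}, take the $L^2(D_\delta)$ norm, and pull each term back to the stretched interval $J$ through the change of variables already used in Theorem~\ref{thm:layer_scaled_exponential_approximation}. By \eqref{eq:layer_error_derivatives}, for $\boldsymbol x=\boldsymbol y+\delta t\boldsymbol n\in D_\delta$,
\[
    \mathcal L_\varepsilon e_{M,\delta}(\boldsymbol x)
    =
    -\frac{\varepsilon}{\delta^2}E_M''(t)
    +
    \frac{\boldsymbol b(\boldsymbol x)\cdot\boldsymbol n}{\delta}E_M'(t)
    +
    c(\boldsymbol x)E_M(t).
\]
The Minkowski inequality in $L^2(D_\delta)$ splits the norm into three terms. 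In the two lower-order terms we bound the coefficients pointwise by $\|\boldsymbol b\cdot\boldsymbol n\|_{L^\infty(D_\delta)}$ and $\|c\|_{L^\infty(D_\delta)}$. This is legitimate because $\boldsymbol b,c\in L^\infty$ and $E_M$, $E_M'$, $E_M''$ are continuous on $J$.

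Each remaining term has the form $\|\phi(t)\|_{L^2(D_\delta)}$ with $\phi$ a function of $t$ alone. Using $d\boldsymbol x=\delta\,d\sigma(\boldsymbol y)\,dt$, we get
\[
    \|\phi(t)\|_{L^2(D_\delta)}=\delta^{1/2}|\Gamma|^{1/2}\|\phi\|_{L^2(J)}.
\]
Multiplying by $\delta^{1/2}$ gives the following prefactors:
\begin{itemize}
    \item diffusion term: $\delta\cdot\varepsilon\delta^{-2}=\varepsilon/\delta$;
    \item convection term: $\delta\cdot\delta^{-1}=1$;
    \item reaction term: $\delta\cdot 1=\delta$.
\end{itemize}
These are exactly the weights in \eqref{eq:normalized_layer_residual_estimate}. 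One point should be stated explicitly: since the patch is a product $\Gamma\times J$ under an isometric parametrization (with $\boldsymbol n$ orthogonal to the hyperplane containing $\Gamma$), the Jacobian is exactly $\delta$. The pulled-back coefficient bound should be applied \emph{before} integrating, so that no spatial dependence remains inside the $t$-integral.

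For \eqref{eq:operator_scaled_exponential_rate}, I will bound $\|E_M^{(k)}\|_{L^2(J)}\le |J|^{1/2}\|E_M^{(k)}\|_{L^\infty(J)}$ for $k=0,1,2$ and invoke \eqref{eq:stretched_coordinate_c2_rate} from Theorem~\ref{thm:derivative_stable_integrated_cauchy}. Alternatively, I can use \eqref{eq:stretched_coordinate_h2_rate} directly. Either way each of the three norms is at most $C\sigma^{-M}$, and absorbing $|\Gamma|^{1/2}$ and $|J|^{1/2}$ into $C$ gives the claim. This $C$ is independent of $\delta$, $\varepsilon$, and $M$, since the stretched-coordinate constants do not involve $\delta$.

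There is no real obstacle here; the argument is a direct scaling computation. The only steps needing care are the exact Jacobian $\delta$ and the bookkeeping of powers of $\delta$. One should also remark that the estimate is informative because $\varepsilon/\delta=O(1)$ for $\delta=\delta_E$ (by \eqref{eq:exponential_layer_width}) and $\varepsilon/\delta=\sqrt\varepsilon$ for $\delta=\delta_P$. Hence the prefactor stays bounded uniformly in $\varepsilon$ in both regimes.
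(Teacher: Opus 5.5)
Your proposal is correct and follows the paper's proof: write $\mathcal L_\varepsilon e_{M,\delta}$ in the stretched coordinate using \eqref{eq:layer_error_derivatives}, apply the triangle inequality, convert each term with $d\boldsymbol x=\delta\,d\sigma(\boldsymbol y)\,dt$, and then invoke Theorem~\ref{thm:derivative_stable_integrated_cauchy} for the exponential rate. Your explicit bookkeeping of the powers of $\delta$, the exact Jacobian, and the $|J|^{1/2}$ factor fills in details that the paper leaves implicit.
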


\begin{proof}
Using \eqref{eq:layer_error_derivatives},
\begin{equation}
    \mathcal L_\varepsilon e_{M,\delta}
    =
    -\frac{\varepsilon}{\delta^2}E_M''(t)
    +
    \frac{\boldsymbol b(\boldsymbol x)\cdot\boldsymbol n}{\delta}
    E_M'(t)
    +
    c(\boldsymbol x)E_M(t).
    \label{eq:operator_on_layer_error}
\end{equation}
The change of variables
$d\boldsymbol x=\delta\,d\sigma(\boldsymbol y)\,dt$ and the triangle
inequality give \eqref{eq:normalized_layer_residual_estimate}. The second
estimate follows from
Theorem~\ref{thm:derivative_stable_integrated_cauchy}.
\end{proof}

For an exponential layer,
$\delta=\delta_E\asymp\varepsilon/|\boldsymbol b\cdot\boldsymbol n_E|$,
so the factor in parentheses in
\eqref{eq:operator_scaled_exponential_rate} remains bounded under the
usual order-one coefficient assumptions. For a canonical characteristic
layer with $\delta=\sqrt\varepsilon$ and
$\boldsymbol b\cdot\boldsymbol n_P=0$, the same factor is
$O(\sqrt\varepsilon)$ up to the bounded reaction coefficient. Thus both
layer classes inherit an exponential strong-residual rate from the same
one-dimensional derivative-stable approximation theorem.

\medskip\noindent\textit{Variational residual.}

Let
\begin{equation}
    a_D(w,v)
    :=
    \int_{D_\delta}
    \left(
        \varepsilon\nabla w\cdot\nabla v
        +
        (\boldsymbol b\cdot\nabla w)v
        +
        cwv
    \right)d\boldsymbol x
    \label{eq:local_variational_form}
\end{equation}
be the local bilinear form on the layer patch. The weak formulation only
requires the value and first derivative of the trial function. Define the
first-order layer-scaled norm
\begin{equation}
    \|v\|_{\mathcal Y_\delta(D_\delta)}^2
    :=
    \delta^{-1}\|v\|_{L^2(D_\delta)}^2
    +
    \delta
    \|\partial_{\boldsymbol n}v\|_{L^2(D_\delta)}^2.
    \label{eq:first_order_layer_scaled_norm}
\end{equation}
Theorem~\ref{thm:layer_scaled_exponential_approximation} immediately gives
\begin{equation}
    \|u_\delta-g_{M,\delta}\|_{\mathcal Y_\delta(D_\delta)}
    \le
    C\sigma^{-M}.
    \label{eq:first_order_layer_scaled_rate}
\end{equation}
Moreover, for every $v\in H^1(D_\delta)$,
\begin{equation}
\begin{aligned}
    |a_D(e_{M,\delta},v)|
    \le{}&
    \varepsilon
    \|\nabla e_{M,\delta}\|_{L^2(D_\delta)}
    \|\nabla v\|_{L^2(D_\delta)}
    \\
    &+
    \|\boldsymbol b\|_{L^\infty(D_\delta)}
    \|\nabla e_{M,\delta}\|_{L^2(D_\delta)}
    \|v\|_{L^2(D_\delta)}
    \\
    &+
    \|c\|_{L^\infty(D_\delta)}
    \|e_{M,\delta}\|_{L^2(D_\delta)}
    \|v\|_{L^2(D_\delta)}.
\end{aligned}
    \label{eq:variational_residual_continuity}
\end{equation}
Using the scaled estimates for $e_{M,\delta}$ and its gradient yields
\begin{equation}
\begin{aligned}
    \delta^{1/2}
    \sup_{0\neq v\in H^1(D_\delta)}
    \frac{|a_D(e_{M,\delta},v)|}
         {\|v\|_{H^1(D_\delta)}}
    \le
    C
    \left(
        \varepsilon
        +
        \|\boldsymbol b\|_{L^\infty(D_\delta)}
        +
        \delta\|c\|_{L^\infty(D_\delta)}
    \right)
    \sigma^{-M}.
\end{aligned}
    \label{eq:variational_residual_exponential_rate}
\end{equation}
Hence the same integrated Cauchy trial space is compatible with both
formulations:
\begin{equation}
    \boxed{
    \begin{aligned}
        \text{strong form:}\quad
        &G_M,\ G_M',\ G_M'',
        \\
        \text{variational form:}\quad
        &G_M,\ G_M'.
    \end{aligned}}
    \label{eq:strong_weak_compatibility}
\end{equation}
The strong formulation exploits the full value--gradient--curvature
hierarchy, whereas integration by parts reduces the weak formulation to the
transition and localized-gradient levels.

\begin{remark}
The estimates in this section establish approximation and consistency of
the integrated Cauchy trial space. They do not, by themselves, imply a
discrete inf--sup condition, quasi-optimality of a particular VPINN
realization, or convergence of a nonconvex optimizer. Those solver-level
questions are separate from the layer-thickness-uniform best-approximation
property proved here.
\end{remark}

\begin{remark}[Curved layers]
Let $r(\boldsymbol x)$ be the signed-distance function to a smooth
interface and consider
\begin{equation}
    u_\delta(\boldsymbol x)
    =
    U\!\left(\frac{r(\boldsymbol x)}{\delta}\right).
    \label{eq:curved_layer_profile}
\end{equation}
In a tubular neighborhood where $\|\nabla r\|=1$,
\begin{equation}
    D^2u_\delta
    =
    \delta^{-2}U''\!\left(\frac r\delta\right)
    \nabla r\otimes\nabla r
    +
    \delta^{-1}U'\!\left(\frac r\delta\right)D^2r,
    \label{eq:curved_layer_hessian}
\end{equation}
and
\begin{equation}
    \Delta u_\delta
    =
    \delta^{-2}U''\!\left(\frac r\delta\right)
    +
    \delta^{-1}(\Delta r)
    U'\!\left(\frac r\delta\right).
    \label{eq:curved_layer_laplacian}
\end{equation}
The leading term retains the same normal $O(\delta^{-2})$ structure as the
planar analysis, while curvature contributes a lower-order
$O(\delta^{-1})$ correction. Consequently, affine integrated Cauchy atoms
provide a patchwise approximation of a smooth curved layer, with different
local normals used along the interface.
\end{remark}

Table~\ref{tab:layer_activation} summarizes how the integrated Cauchy
activation, its derivative, and its second derivative match the dominant
layer structures in the benchmark problems considered below.

\begin{table}[H]
\centering
\caption{Layer structures in Tasks 2--6 and their matching with the integrated Cauchy activation. The common mechanism is value-level transition together with derivative-level localization effects.}
\footnotesize
\resizebox{\textwidth}{!}{
\begin{tabular}{m{0.08\textwidth} m{0.16\textwidth} m{0.27\textwidth} m{0.24\textwidth} m{0.29\textwidth}}
\toprule
Task & Layer type & Representative layer component & Dominant derivative structure & Integrated Cauchy matching \\
\midrule
Task 2 & Planar interior layer & $T((x-1/2)/\varepsilon)$ with $T=\tanh$ & $u_x=O(\varepsilon^{-1})$ localized near $x=1/2$ & The affine coordinate aligns with the $x$-normal direction; $d/\|w\|=O(\varepsilon)$ resolves the layer width. \\[4pt]
\midrule
Task 3 & Boundary layer & $(1-\exp(-(1-x)/\varepsilon))\sin(\pi y)$ & $u_x=O(\varepsilon^{-1})$ localized near $x=1$ & The activation value represents the boundary transition, while the derivative represents the sharp normal layer derivative. \\[4pt]
\midrule
Task 4 & Mixed interior-boundary layer & $\tanh((x-1/2)/\varepsilon)+1-\exp(-(1-x)/\varepsilon)$ & Superposition of interior-layer and boundary-layer localized derivatives & Multiple neurons can align with different layer locations and scales, resolving coexisting localized structures. \\[4pt]
\midrule
Task 5 & Outflow layers and corner layer & $\exp(2(x-1)/\varepsilon)$, $\exp(3(y-1)/\varepsilon)$, and $\exp((2(x-1)+3(y-1))/\varepsilon)$ & Localized derivatives near outflow boundaries and the corner interaction region & Different affine directions align with $x$-outflow, $y$-outflow, and combined corner-layer directions. \\[4pt]
\midrule
Task 6 & Circular interior layer & $\arctan(200(0.25^2-(x-0.5)^2-(y-0.5)^2))$ & $\|\nabla u\|$ localized near a circular interface & The curved layer is represented patch by patch; each neuron learns a local normal coordinate through its affine projection. \\
\bottomrule
\end{tabular}}
\label{tab:layer_activation}
\end{table}

\section{Experimental Framework}
\subsection{Layer-Resolving XNet Physics-Informed Neural Network}

In this subsection, we briefly introduce the proposed LRX-PINN framework and its layer-adapted integrated Cauchy activation for convection-dominated convection--diffusion problems. We consider a general boundary value problem of the form
\begin{equation}
\mathcal{N}[u](\mathbf{x}) = f(\mathbf{x}), \quad \mathbf{x} \in \Omega,
\end{equation}
subject to boundary (or initial) conditions
\begin{equation*}
\mathcal{B}[u](\mathbf{x}) = g(\mathbf{x}), \quad \mathbf{x} \in \partial \Omega,
\end{equation*}
where \(\mathcal{N}\) denotes a differential operator and \(\mathcal{B}\) represents the associated boundary operator.

Within the PINN framework, the solution \(u(\mathbf{x})\) is approximated by a neural network \(u_\theta(\mathbf{x})\), whose parameters are determined by minimizing the residuals of the governing equation and boundary conditions.

In the proposed LRX-PINN, the network is constructed using integrated
Cauchy activation functions \(\Phi_d\). The network is trained by minimizing a loss function that enforces both the governing equation and the boundary conditions.

The corresponding PDE residual is defined as
\begin{equation*}
\mathcal{R}_\theta(\mathbf{x}) := \mathcal{N}[u_\theta](\mathbf{x}) - f(\mathbf{x}), \quad \mathbf{x} \in \Omega.
\end{equation*}

The total loss function is given by
\[
\mathcal{L}(\theta)
=
\lambda_f\mathcal{L}_{\text{PDE}}
+
\lambda_b\mathcal{L}_{\text{BC}},
\]
where
\[
\mathcal{L}_{\text{PDE}}
=
\frac{1}{N_f}
\sum_{i=1}^{N_f}
\left|
\mathcal{R}_\theta(\mathbf{x}_i^f)
\right|^2,
\]
and
\[
\mathcal{L}_{\text{BC}}
=
\frac{1}{N_b}
\sum_{i=1}^{N_b}
\left|
\mathcal{B}[u_\theta](\mathbf{x}_i^b)-g(\mathbf{x}_i^b)
\right|^2.
\]
Here, $\{\mathbf{x}_i^f\}_{i=1}^{N_f} \subset \Omega$ and $\{\mathbf{x}_i^b\}_{i=1}^{N_b} \subset \partial \Omega$ denote collocation points in the interior and on the boundary, respectively. The network parameters $\theta$ are optimized via gradient-based methods, where derivatives of $u_\theta$ are computed using automatic differentiation.

For convection-dominated convection--diffusion problems with $0<\varepsilon\ll1$,
the solution typically develops thin boundary or interior layers of width \(O(\varepsilon)\), leading to highly localized transition structures. Motivated by the layer-adapted analysis presented in Section 2, we employ the following integrated Cauchy activation:
\begin{equation}\label{modify}
\Phi_d(z)
=
\frac{\lambda_1}{2}\log(z^2+d^2)
+
\frac{\lambda_2}{d}
\arctan\!\left(\frac{z}{d}\right).
\end{equation}

For notational simplicity, we write \(\Phi_d(z)\) instead of
\(\Phi_{\lambda_1,\lambda_2,d}(z)\) when no confusion arises. The derivative of the activation satisfies $\Phi_d'(z)=\phi_d(z)$, 
thereby preserving the localized structure of the original Cauchy kernel while introducing a transition-type representation. Moreover, the scale parameter \(d\) controls the effective transition width of the activation and can be selected according to $\frac{d}{\|w\|} = O(\varepsilon)$, allowing the activation scale to adapt to the intrinsic width of the layer structure. Compared with the original Cauchy activation, the proposed variant provides
a more suitable neural representation for thin-layer solutions and leads to
improved accuracy and robustness in the small-diffusion regime.

\subsection{Benchmark Problems}
In this subsection, we present six representative convection--diffusion problems and classify them into three categories according to the complexity of their layer structures and the corresponding approximation difficulty.

\textbf{Regular problem.}\enspace
Task~1 is a smooth convection-diffusion problem without pronounced layer structures.

\textbf{Mild convection-dominated problems.}\enspace
Task~2 develops a sharp interior transition layer;
Task~3 forms thin boundary layers near the domain boundary;
Task~4 combines both, producing multiscale interior-boundary layer structures.
Together with the smooth case in Task~1, these benchmarks provide a progressive test set for evaluating the approximation capability of LRX-PINN under increasing layer complexity.

\textbf{Strongly convection-dominated problems.}\enspace
Task~5 \citep{john1997} exhibits thin outflow-boundary layers with highly localized
gradients;
Task~6 \citep{john1997} contains a circular interior layer with a curved
sharp-transition structure.
For these problems, the integrated Cauchy activation is embedded into more advanced
PINN frameworks.

Table~\ref{tab:layer_activation} clarifies the representation-oriented
route of this work. We begin from the singularly perturbed PDE structure,
identify the layer mechanisms that drive the approximation difficulty,
construct a Cauchy-based basis whose value and derivative match these
structures, and then test the resulting layer-adapted trial space across
planar, boundary, mixed, outflow, corner, and curved layers.

\subsection{Experimental Setup}

We compare the proposed LRX-PINN with two representative PINN-based approaches: PIKAN\footnote{Code available at \url{https://github.com/wanjiashan/PIKANs}}, based on Kolmogorov--Arnold representations, and Fourier PINN\footnote{Code available at \url{https://github.com/songc0a/Fourier-feature-PINN-based-multifrequency-multisource-Helmholtz-solver}}, based on Fourier feature embeddings. The implementations are adapted from the publicly available codes in \cite{wang2024kinn}. For the strongly convection-dominated benchmarks (Tasks 5--6), we further incorporate the proposed integrated Cauchy activation into hp-VPINN/hp-fastVPINN frameworks\footnote{Original fastVPINN code available at \url{https://github.com/airexlab/fastvpinns}} while keeping the other components unchanged.


For Tasks 1--4, all methods are trained using 8000 Adam iterations followed by 12000 L-BFGS iterations. As observed in \cite{si2026}, the L-BFGS stage is important for obtaining high-accuracy PINN solutions. We use fixed loss weights $\lambda_f=1$ and $\lambda_b=40$ for Tasks 1--4. No adaptive loss-balancing strategy is used, so that the effect of the proposed neural representation can be isolated more clearly.

For Tasks 5--6, following the settings in \cite{anandh2025,frerichs2026}, we employ the prescribed 50,000 or 100,000 Adam iterations and keep the original loss weights, stabilization parameters, and training schedules unchanged.

\paragraph{Computational Cost.}

Although LRX-PINN uses a much smaller number of trainable parameters than the baseline networks, its per-neuron computational cost is higher. For a single hidden layer with $N$ neurons and input dimension $d_{\rm in}$, the affine part has cost $O(Nd_{\rm in})$, while the activation evaluation remains linear in $N$ but has a larger constant factor because the integrated Cauchy activation contains rational, logarithmic, and arctangent operations. PINN training further requires automatic differentiation of first- and second-order derivatives, so the wall-clock time is not solely determined by parameter count. Therefore, LRX-PINN should be interpreted as an accuracy--parameter efficient model rather than a uniformly faster model. In the reported convection-dominated benchmarks, this additional constant cost is compensated by improved accuracy and a substantially smaller trainable representation.

\paragraph{Network Configurations.}

The proposed LRX-PINN adopts a single-hidden-layer architecture with $N_{\text{Cauchy}}=120$ neurons. Each neuron is equipped with trainable Cauchy activation parameters $\{\lambda_1,\lambda_2,d\}$, allowing the network to adaptively capture localized structures and sharp layers. For extremely thin-layer problems, the modified Cauchy activation $\Phi_d$ in (\ref{modify}) is employed. The resulting model contains $841$ trainable parameters.

PIKAN employs a residual KAN architecture with $D_{\text{KAN}}=3$ hidden layers of width $N_{\text{KAN}}=20$ and $N_{\text{basis}}=7$ Gaussian RBF basis functions per coordinate. Residual skip connections and layer normalization are incorporated to improve optimization stability. The total number of trainable parameters is $7065$.

Fourier PINN employs a Fourier feature embedding with mapping size $M_{\mathrm{Fourier}}=16$, followed by a fully connected network with $D_{\mathrm{Fourier}}=3$ hidden layers of width $N_{\mathrm{Fourier}}=32$. The Fourier scale parameter $\sigma$ is selected according to $\varepsilon$ and remains fixed during training. The resulting model contains 3201 trainable parameters.

\begin{table}[H]
\centering
\caption{Network configurations and parameter counts.}
\label{tab:model_size}
\begin{tabular}{lcc}
\toprule
Method & Architecture & Params \\
\midrule
LRX-PINN & 1 hidden layer, $N_{\text{Cauchy}}=120$ & 841 \\
PIKAN & $D_{\text{KAN}}=3$, $N_{\text{KAN}}=20$, $N_{\text{basis}}=7$ & 7065 \\
Fourier PINN & $D_{\text{Fourier}}=3$, $N_{\text{Fourier}}=32$, $M_{\text{Fourier}}=16$ & 3201 \\
\bottomrule
\end{tabular}\label{para}
\end{table}

The parameter reduction is substantial. Compared with PIKAN and Fourier PINN, the parameter ratios are
$$
\frac{N_{\rm PIKAN}}{N_{\rm LR}}=\frac{7065}{841}\approx 8.40,
\qquad
\frac{N_{\rm Fourier}}{N_{\rm LR}}=\frac{3201}{841}\approx 3.81.
$$
Equivalently, LRX-PINN uses only about $11.9\%$ of the PIKAN parameters and $26.3\%$ of the Fourier PINN parameters. Therefore, the comparison is not only an accuracy comparison, but also a parameter-efficiency comparison. Table~\ref{tab:model_size} summarizes the configurations and parameter counts of the compared models. We adopt a single-hidden-layer LRX-PINN with 120 neurons, which provides a compact architecture while retaining sufficient representational capacity for the benchmark problems considered in this work. To further highlight the advantages of the proposed method, we use baseline architectures with moderate depth and increased width following their original configurations, resulting in a larger number of parameters compared to LRX-PINN. This design indicates that the observed performance improvement is mainly associated with the proposed representation under the tested configurations, rather than with increased parameter counts or deeper network stacking.

\paragraph{Cauchy-Enhanced Baselines.}

For the strongly convection-dominated benchmarks Tasks 5--6, our objective is to assess the contribution of the proposed integrated Cauchy representation independently of a particular PINN formulation. To this end, we embed the proposed activation into state-of-the-art hp-VPINN-based frameworks from the literature while keeping their original loss functionals, stabilization strategies, and training settings unchanged. The resulting models are referred to as LRX-enhanced hp-VPINN.

For Task 5, we consider the improved hp-VPINN framework proposed in \cite{anandh2025}. The SUPG-stabilized loss functionals, namely $\mathcal{L}_{\tau}^{\mathrm{SUPG}}$ with constant $\tau$ and $\mathcal{L}_{\tau}^{\mathrm{SUPG}}$ with learned $\tau$, are kept identical to those in the original work, together with the same network configurations and training settings. The only modification is the replacement of the original activation function with the proposed integrated Cauchy activation.

For Task 6, since this benchmark was not considered in \cite{anandh2025}, we use the same LRX-enhanced hp-VPINN setting and choose the network depth, width, and training settings to match those used in \cite{frerichs2026}. The resulting model is compared with the best-performing configuration reported in \cite{frerichs2026}, namely the Limited Residual PINN equipped with the Crosswind Loss Functional.

Through Tasks 5--6, we isolate the effect of the proposed Cauchy-based activation structure from the effects of loss design, stabilization, and training strategy.

\paragraph{Evaluation Metrics.}

We evaluate all methods using the relative $L_2$ error (Rel-$L_2$), local relative error on the layer region (Rel-line), maximum boundary error (Bc-max), PDE residual root mean square error (RMSE) and parameter count (Params).

The global relative error is defined as
\[
\mathrm{Rel}\text{-}L_2=
\frac{\|u_\theta-u\|_{L_2(\Omega)}}
{\|u\|_{L_2(\Omega)}}.
\]

The local relative error on the target layer region $\Omega_{\mathrm{line}}$ is defined as
\[
\mathrm{Rel}\text{-line}=
\frac{\|u_\theta-u\|_{L_2(\Omega_{\mathrm{line}})}}
{\|u\|_{L_2(\Omega_{\mathrm{line}})}}.
\]

The maximum boundary error is
\[
\mathrm{Bc}\text{-max}
=
\max_{\mathbf{x}\in\partial\Omega}
|u_\theta(\mathbf{x})-u(\mathbf{x})|.
\]

The residual root mean square error is
\[
\mathrm{RMSE}
=
\left(
\frac{1}{|\Omega|}
\int_\Omega
|\mathcal{R}(u_\theta)|^2
\,d\mathbf{x}
\right)^{1/2}.
\]

Smaller values of Rel-$L_2$, Rel-line, Bc-max, and RMSE correspond to higher approximation accuracy and better PDE satisfaction.
  
\section{Numerical Results}\label{numerical results}

In this section, we systematically evaluate the proposed LRX-PINN on six representative convection--diffusion benchmarks with increasing layer complexity. The experiments are designed to assess the approximation capability of the proposed Cauchy-based network architecture, the effectiveness of the layer-adapted integrated Cauchy activation in strongly convection-dominated regimes, and the transferability of the proposed activation when embedded into advanced PINN frameworks.

For mildly convection-dominated problems (\textbf{Tasks 1--4} with $\varepsilon=0.5$, $0.1$, and $0.05$), the original and integrated Cauchy activations exhibit nearly identical performance. Therefore, only the results obtained using the original Cauchy activation are reported in these cases. To ensure a fair comparison and to isolate the contribution of the proposed method, the LRX-PINN is compared with the original PIKAN and Fourier PINN architectures without additional problem-specific modifications or optimizations. It is worth noting that both baseline models have been widely reported to achieve strong performance compared with conventional PINN methods on a variety of benchmark problems. Consequently, the performance improvements observed in this work are primarily associated with the proposed Cauchy-based neural representation under the considered experimental settings rather than to differences in training strategies or model tuning.

When convection becomes strongly dominant (\textbf{Tasks 1--4} with $\varepsilon=10^{-2}$ and \textbf{Tasks 5--6}), the integrated Cauchy activation consistently demonstrates significant advantages over the original formulation. This observation provides experimental evidence for the proposed
layer-adapted design and confirms the importance of matching the activation structure to the value-derivative signature of thin layers.

According to our experimental results, the benchmark models achieve satisfactory performance when the parameter $\varepsilon$ is relatively large. Performance degradation is observed only as $\varepsilon$ decreases. This behavior is consistent with the well-known representational limitations of conventional PINN-based methods when dealing with increasingly challenging multiscale or convection-diffusion problems. Therefore, the results indicate that the benchmark methods have already demonstrated their expected performance under appropriate settings. The observed performance gap between LRX-PINN and other methods is not solely attributable to differences in hyperparameter tuning, but rather reflects the intrinsic limitations of the underlying model architectures.

\subsection{Task 1: Smooth Convection-Diffusion Problem}

We consider the two-dimensional convection--diffusion equation
\[
-\Delta u(x,y) + \partial_x u(x,y) = f(x,y),
\qquad (x,y)\in \Omega=(0,1)^2,
\]
with Dirichlet boundary condition
\[
u(x,y)=u_{\mathrm{exact}}(x,y),
\qquad (x,y)\in\partial\Omega.
\]

The exact solution is chosen as
\[
u_{\mathrm{exact}}(x,y)
=
\sin(\pi x)\sin(\pi y).
\]

Hence,
\[
f(x,y)
=
-\Delta u_{\mathrm{exact}} + \partial_x u_{\mathrm{exact}}
=
2\pi^2 \sin(\pi x)\sin(\pi y)
+
\pi \cos(\pi x)\sin(\pi y).
\]
Error analysis and quantitative comparisons are presented in Table \ref{eps1}. Visualizations of the exact and predicted solutions are shown in Figure \ref{task1}.

\begin{table}[H]
\caption{Error comparison of Task 1.}
\centering
\footnotesize
\begin{tabular}{lccccccc}
\toprule
Method 
& Rel-$L_2$
& Bc-max
& RMSE
& Params\\
\midrule

PIKAN
& $9.007{\times}10^{-4}$
& $2.514{\times}10^{-3}$
& $1.325{\times}10^{-2}$
& 7065\\

Fourier PINN
& $3.354{\times}10^{-4}$
& $2.193{\times}10^{-3}$
& $1.780{\times}10^{-3}$
& 3201\\

LRX-PINN
& $\mathbf{6.202{\times}10^{-5}}$ 
& $\mathbf{1.848{\times}10^{-4}}$ 
& $\mathbf{9.865{\times}10^{-4}}$
& 841 \\

\bottomrule
\end{tabular}
\label{eps1}
\end{table}

\begin{figure}[H]
\centering
\includegraphics[width=\textwidth]{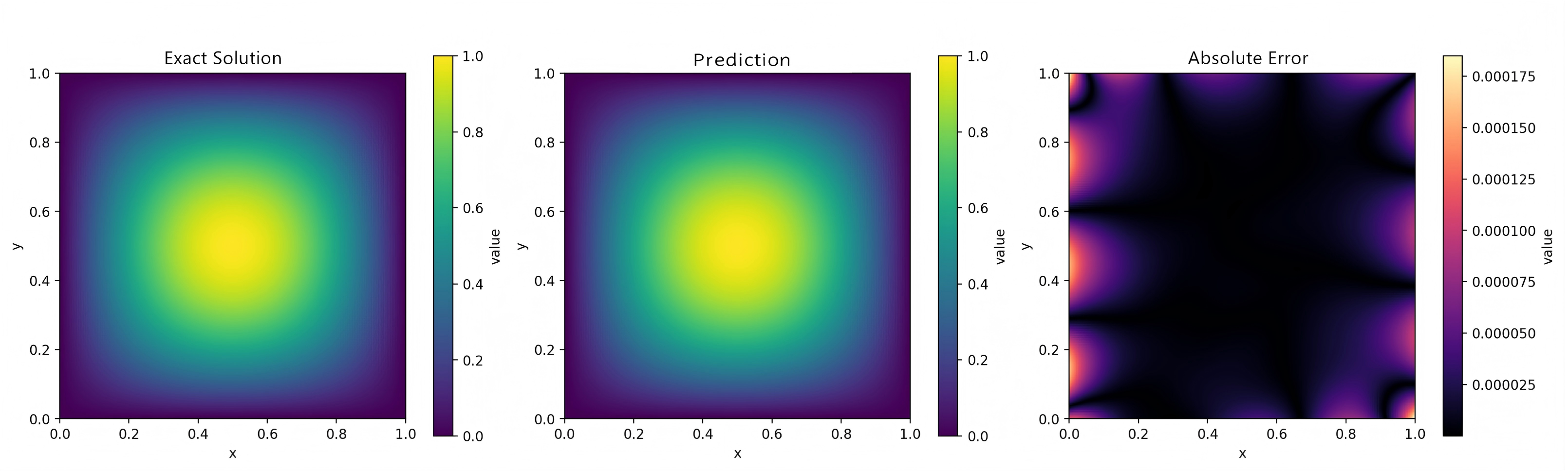}
\caption{Exact solution (left), LRX-PINN prediction (middle), and absolute error (right) for Task 1.}
\label{task1} 
\end{figure}

It should be noted that, for \textbf{Tasks 1-4}, the numbers of trainable parameters for all models are identical to those reported in Table \ref{para}. To avoid redundancy, the parameter counts are omitted from the subsequent tables.

As shown in Table~\ref{eps1}, the proposed LRX-PINN achieves the highest accuracy among all compared methods while using only 841 trainable parameters. The relative $L_2$ error reaches the order of $10^{-5}$, representing an improvement of approximately one order of magnitude over PIKAN and more than five times over Fourier PINN.

Since \textbf{Task~1} does not contain localized layer structures, this experiment primarily evaluates the approximation capability of the network itself. The results indicate that the proposed Cauchy-based architecture does not sacrifice accuracy on smooth solutions despite its strong localization properties. Instead, it provides a highly efficient representation that achieves superior accuracy with substantially fewer parameters than the competing approaches.

\subsection{Task 2: Interior-Layer Problem}

We consider the two-dimensional convection--diffusion equation
\[
-\varepsilon \Delta u(x,y) + \left(x-\frac12\right)\partial_x u(x,y) = f(x,y),
\qquad (x,y)\in(0,1)^2,
\]
with Dirichlet boundary condition
\[
u(x,y)=u_{\mathrm{exact}}(x,y),
\qquad (x,y)\in\partial\Omega,
\quad \Omega=(0,1)^2.
\]

The exact solution is chosen as
\[
u_{\mathrm{exact}}(x,y)
=
\operatorname{tanh}\!\left(\frac{x-\frac12}{\varepsilon}\right).
\]

In this case,
\[
f(x,y)=
\frac{2}{\varepsilon}
\operatorname{sech}^2\!\left(\frac{x-\frac12}{\varepsilon}\right)
\operatorname{tanh}\!\left(\frac{x-\frac12}{\varepsilon}\right)
+
\frac{x-\frac12}{\varepsilon}
\operatorname{sech}^2\!\left(\frac{x-\frac12}{\varepsilon}\right).
\]

The exact solution contains an interior transition layer around \(x=0.5\).
As \(\varepsilon\) decreases, the layer becomes thinner and the gradient
becomes sharper, making accurate numerical approximation increasingly
difficult. This poses a significant challenge for numerical methods due to the presence of sharp gradients within a narrow region. Such behavior is commonly observed in many engineering applications.

To evaluate the performance of LRX-PINN, we conduct experiments across different layer widths corresponding to varying values of \(\varepsilon\). In this problem, we consider $$ \varepsilon \in \left\{5\times10^{-1},\;1\times 10^{-1},\;5\times10^{-2},\;1\times10^{-2}\right\}$$.

\begin{table}[H]
\caption{Error comparison of Task 2 ($\varepsilon = 5\times 10^{-1}$).}
\centering
\footnotesize
\begin{tabular}{lccccccc}
\toprule
Method 
& Rel-$L_2$
& {Rel\text{-}line}
& Bc-max
& RMSE \\
\midrule

PIKAN
& $1.351{\times}10^{-4}$
& $2.147{\times}10^{-4}$
& $3.162{\times}10^{-4}$
& $1.808{\times}10^{-3}$\\

Fourier PINN
& $2.113{\times}10^{-4}$
& $1.827{\times}10^{-4}$
& $2.007{\times}10^{-3}$
& $1.071{\times}10^{-2}$\\

\textbf{LRX-PINN}
& $\mathbf{5.267{\times}10^{-5}}$
& $\mathbf{7.303{\times}10^{-5}}$
& $\mathbf{9.778{\times}10^{-5}}$
& $\mathbf{7.440{\times}10^{-4}}$\\

\bottomrule
\end{tabular}

\label{eps25e-1}
\end{table}

\begin{figure}[H]
\centering
\includegraphics[width=1\textwidth]{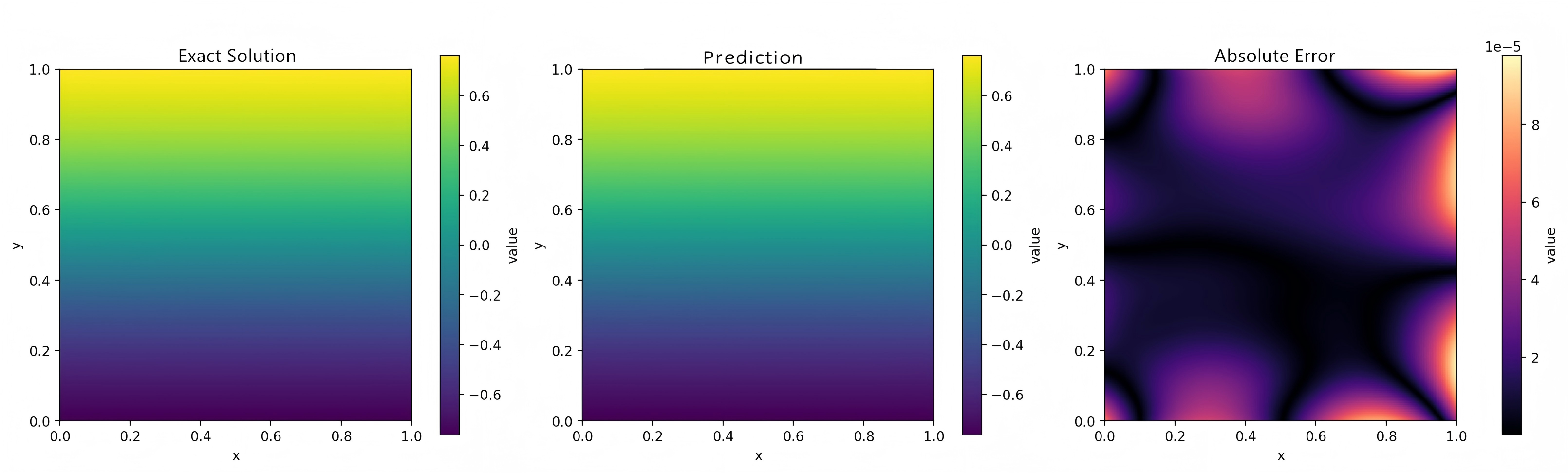}
\caption{Exact solution (left), LRX-PINN prediction (middle), and absolute error (right) for Task 2 ($\varepsilon=5\times 10^{-1}$).}
\label{task25e-1}
\end{figure}

\begin{table}[H]
\caption{Error comparison of Task 2 ($\varepsilon = 1\times 10^{-1}$).}
\centering
\footnotesize
\begin{tabular}{lccccccc}
\toprule
Method 
& Rel-$L_2$
& {Rel\text{-}line}
& Bc-max
& RMSE \\
\midrule

PIKAN
& $3.149{\times}10^{-4}$
& $3.019{\times}10^{-4}$
& $1.429{\times}10^{-3}$
& $8.062{\times}10^{-3}$\\

Fourier PINN
& $5.046{\times}10^{-4}$
& $5.260{\times}10^{-4}$
& $4.970{\times}10^{-3}$
& $2.859{\times}10^{-2}$\\

\textbf{LRX-PINN}
& $\mathbf{6.194{\times}10^{-5}}$
& $\mathbf{5.149{\times}10^{-5}}$
& $\mathbf{8.540{\times}10^{-5}}$
& $\mathbf{2.955{\times}10^{-4}}$\\
\bottomrule
\end{tabular}

\label{eps21e-1}
\end{table}

\begin{figure}[H]
\centering
\includegraphics[width=1\textwidth]{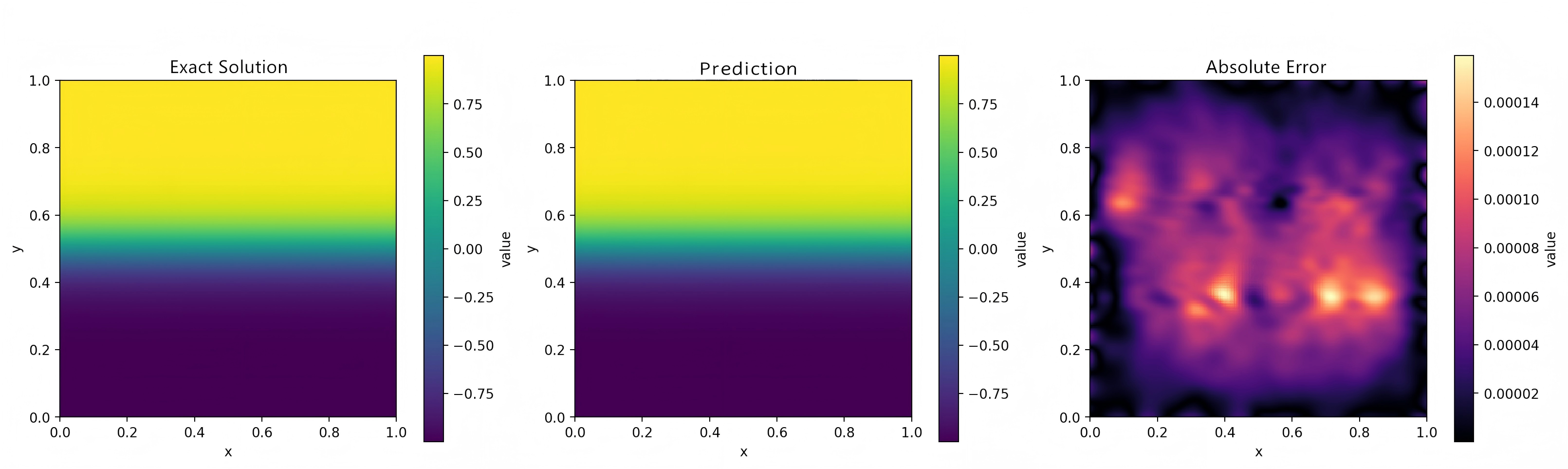}
\caption{Exact solution (left), LRX-PINN prediction (middle), and absolute error (right) for Task 2 ($\varepsilon=1\times 10^{-1}$).}
\label{task21e-1}
\end{figure}

\begin{table}[H]
\caption{Error comparison of Task 2 ($\varepsilon = 5\times 10^{-2}$).}
\centering
\footnotesize
\begin{tabular}{lccccccc}
\toprule
Method 
& Rel-$L_2$
& {Rel\text{-}line}
& Bc-max
& RMSE \\
\midrule

PIKAN
& $5.623{\times}10^{-3}$
& $6.293{\times}10^{-3}$
& $3.997{\times}10^{-3}$
& $2.828{\times}10^{-2}$\\

Fourier PINN
& $1.455{\times}10^{-3}$
& $1.535{\times}10^{-3}$
& $1.114{\times}10^{-2}$
& $8.425{\times}10^{-2}$\\

\textbf{LRX-PINN}
& $\mathbf{2.022{\times}10^{-4}}$
& $\mathbf{5.345{\times}10^{-5}}$
& $\mathbf{1.323{\times}10^{-4}}$
& $\mathbf{4.171{\times}10^{-3}}$\\

\bottomrule
\end{tabular}

\label{eps25e-2}
\end{table}

\begin{figure}[H]
\centering
\includegraphics[width=1\textwidth]{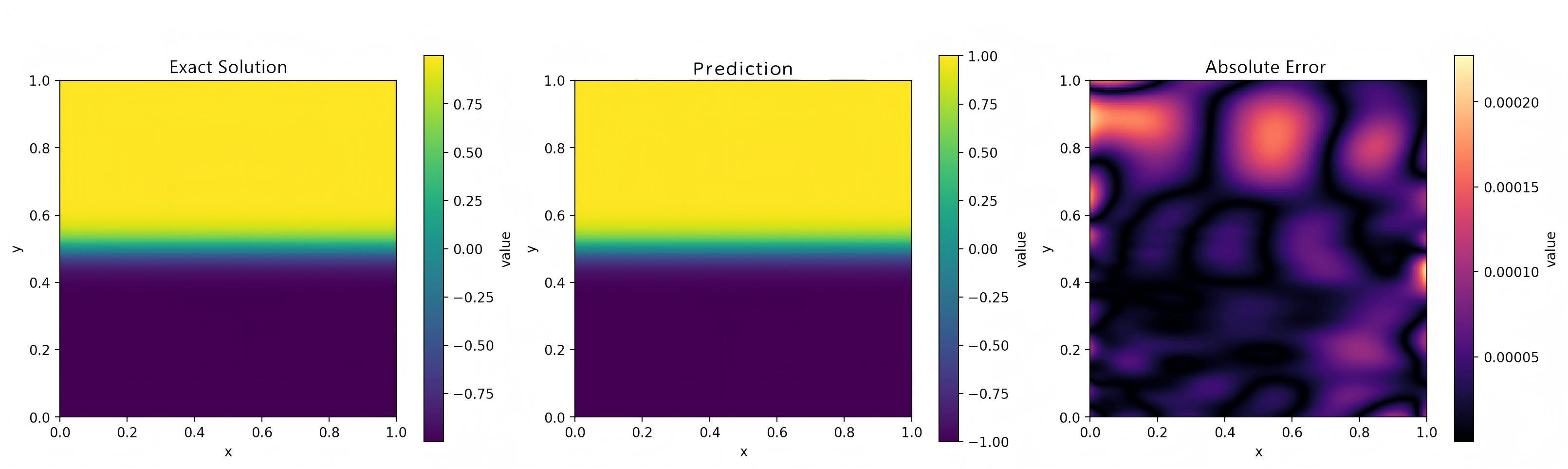}
\caption{Exact solution (left), LRX-PINN prediction (middle), and absolute error (right) for Task 2 ($\varepsilon=5\times 10^{-2}$).}
\label{task25e-2}
\end{figure}

Tables~\ref{eps25e-1}--\ref{eps25e-2} demonstrate that the proposed LRX-PINN consistently achieves the best performance across all moderately convection-dominated cases. As the diffusion parameter decreases, the interior layer becomes progressively thinner and the approximation task becomes more challenging. While both PIKAN and Fourier PINN exhibit noticeable accuracy degradation, the proposed method maintains stable approximation quality and preserves local layer resolution.

In particular, even for $\varepsilon=5\times10^{-2}$, the relative $L_2$ error remains at the order of $10^{-4}$ and the local Rel-line error remains at the order of $10^{-5}$. These results indicate that the original Cauchy activation already provides an effective localized representation for moderately thin interior layers.

\begin{table}[H]
\caption{Error comparison of Task 2 ($\varepsilon = 1\times 10^{-2}$).}
\centering
\footnotesize
\begin{tabular}{lcccccccc}
\toprule
Method 
& Rel-$L_2$
& {Rel\text{-}line}
& Bc-max
& RMSE \\
\midrule

PIKAN
& $4.854{\times}10^{-2}$
& $8.980{\times}10^{-2}$
& $8.343{\times}10^{-2}$
& $2.276{\times}10^{-1}$\\

Fourier PINN
& $5.782{\times}10^{-2}$
& $6.718{\times}10^{-2}$
& $5.449{\times}10^{-2}$
& $3.856{\times}10^{-1}$\\

LRX-PINN (Cauchy activation)
& $1.714{\times}10^{-3}$
& $1.008{\times}10^{-3}$
& $2.366{\times}10^{-3}$
& $5.399{\times}10^{-3}$\\

\textbf{LRX-PINN (integrated Cauchy activation)}
& $\mathbf{2.668{\times}10^{-4}}$
& $\mathbf{5.369{\times}10^{-4}}$
& $\mathbf{1.708{\times}10^{-4}}$
& $\mathbf{2.046{\times}10^{-3}}$\\

\bottomrule
\end{tabular}
\label{eps21e-2}
\end{table}

\begin{figure}[H]
\centering
\includegraphics[width=1\textwidth]{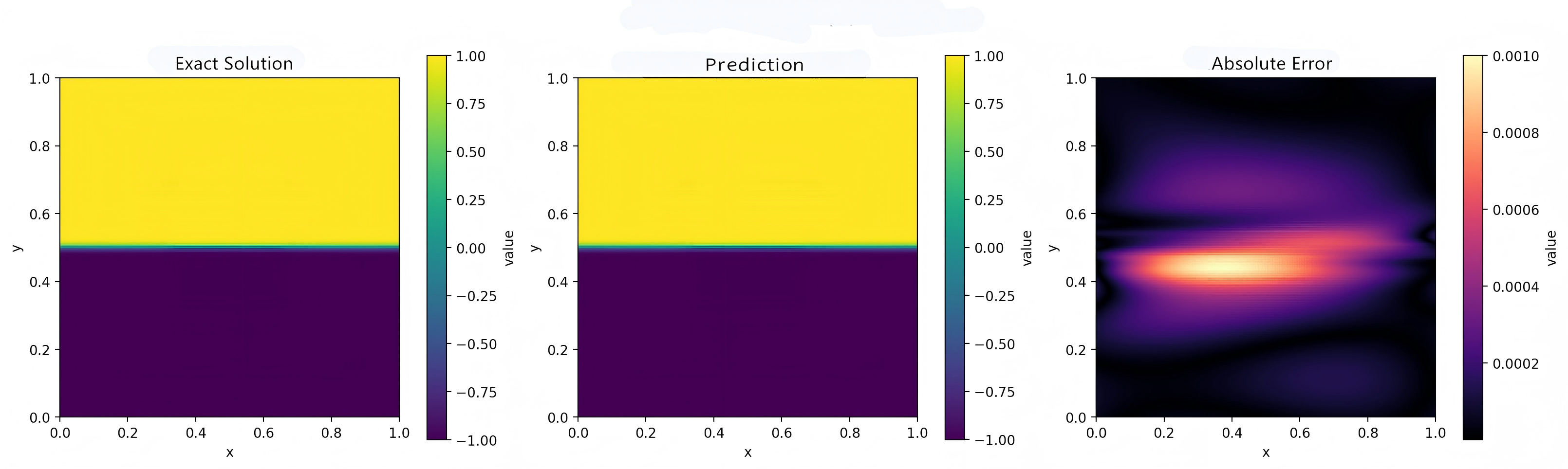}
\caption{Exact solution (left), LRX-PINN prediction (middle), and absolute error (right) for Task 2 ($\varepsilon=1\times 10^{-2}$).}
\label{task21e-2}
\end{figure}
Table~\ref{eps21e-2} corresponds to the convection-dominated regime with $\varepsilon=10^{-2}$, where the interior-layer width becomes extremely small and the solution exhibits highly localized transition behavior. In this setting, both PIKAN and Fourier PINN experience substantial degradation, yielding relative errors of order $10^{-2}$ and failing to accurately resolve the sharp interior layer.

Although the original Cauchy activation already provides a significant improvement over the competing approaches, the proposed integrated Cauchy activation further reduces the relative $L_2$ error from $1.714\times10^{-3}$ to $2.668\times10^{-4}$ and improves the boundary and residual errors by nearly one order of magnitude. This observation confirms that, when the layer thickness approaches the intrinsic approximation scale of the network, incorporating a transition-type activation whose effective width is aligned with the layer width becomes crucial for accurately representing thin interior layers.

The substantial reduction in both Rel-$L_2$ and Rel-line demonstrates that the proposed layer-resolving activation not only improves global accuracy but also significantly enhances the resolution of highly localized structures.

\subsection{Task 3: Boundary-Layer Problem}

We consider the two-dimensional convection-diffusion equation
\[
-\varepsilon \Delta u(x,y) + \partial_x u(x,y) = f(x,y),
\qquad (x,y)\in \Omega=(0,1)^2,
\]
with Dirichlet boundary condition
\[
u(x,y)=u_{\mathrm{exact}}(x,y),
\qquad (x,y)\in\partial\Omega.
\]

The exact solution is chosen as
\[
u_{\mathrm{exact}}(x,y)
=
\left(1-\exp\!\left(-\frac{1-x}{\varepsilon}\right)\right)\sin(\pi y).
\]

Hence,
\[
f(x,y)
=
-\varepsilon \Delta u_{\mathrm{exact}} + \partial_x u_{\mathrm{exact}}
=
\varepsilon \pi^2
\left(1-\exp\!\left(-\frac{1-x}{\varepsilon}\right)\right)\sin(\pi y).
\]

We still consider $\varepsilon \in \left\{5\times10^{-1},\;1\times 10^{-1},\;5\times10^{-2},\;1\times10^{-2}\right\}$.

\begin{table}[H]
\caption{Error comparison of Task 3 ($\varepsilon = 5\times 10^{-1}$).}
\centering
\footnotesize
\begin{tabular}{lccccccc}
\toprule
Method 
& Rel-$L_2$
& {Rel\text{-}line}
& Bc-max
& RMSE \\
\midrule

PIKAN
& $5.333{\times}10^{-3}$
& $2.497{\times}10^{-2}$
& $2.086{\times}10^{-2}$
& $3.818{\times}10^{-2}$\\

Fourier PINN
& $1.720{\times}10^{-4}$
& $4.364{\times}10^{-4}$
& $5.937{\times}10^{-4}$
& $4.648{\times}10^{-3}$\\

\textbf{LRX-PINN}
& $\mathbf{3.360{\times}10^{-5}}$
& $\mathbf{2.807{\times}10^{-5}}$
& $\mathbf{1.678{\times}10^{-4}}$
& $\mathbf{1.747{\times}10^{-4}}$\\

\bottomrule
\end{tabular}
\label{eps35e-1}
\end{table}

\begin{figure}[H]
\centering
\includegraphics[width=1\textwidth]{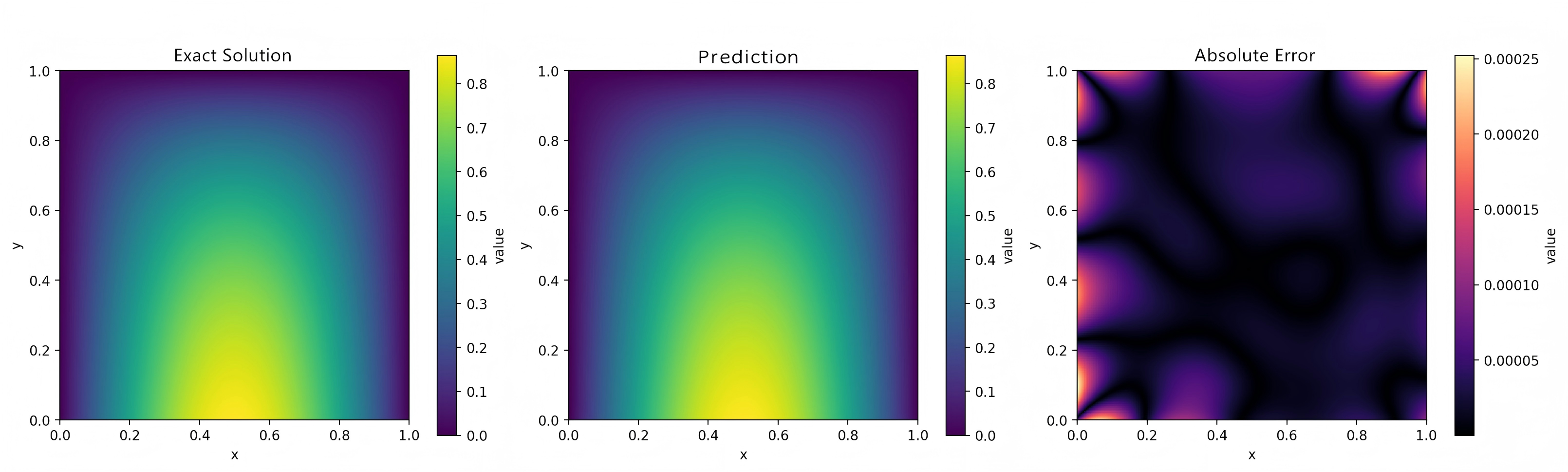}
\caption{Exact solution (left), LRX-PINN prediction (middle), and absolute error (right) for Task 3 ($\varepsilon=5\times 10^{-1}$).}
\label{task35e-1}
\end{figure}

\begin{table}[H]
\caption{Error comparison of Task 3 ($\varepsilon = 1\times 10^{-1}$).}
\centering
\footnotesize
\begin{tabular}{lccccccc}
\toprule
Method 
& Rel-$L_2$
& {Rel\text{-}line}
& Bc-max
& RMSE \\
\midrule

PIKAN
& $4.064{\times}10^{-3}$
& $9.560{\times}10^{-3}$
& $3.570{\times}10^{-2}$
& $2.289{\times}10^{-2}$\\

Fourier PINN
& $5.715{\times}10^{-3}$
& $6.986{\times}10^{-3}$
& $1.744{\times}10^{-2}$
& $1.253{\times}10^{-1}$\\

\textbf{LRX-PINN}
& $\mathbf{2.018{\times}10^{-5}}$
& $\mathbf{7.357{\times}10^{-6}}$
& $\mathbf{1.591{\times}10^{-4}}$
& $\mathbf{6.419{\times}10^{-4}}$\\

\bottomrule
\end{tabular}

\label{eps31e-1}
\end{table}

\begin{figure}[H]
\centering
\includegraphics[width=1\textwidth]{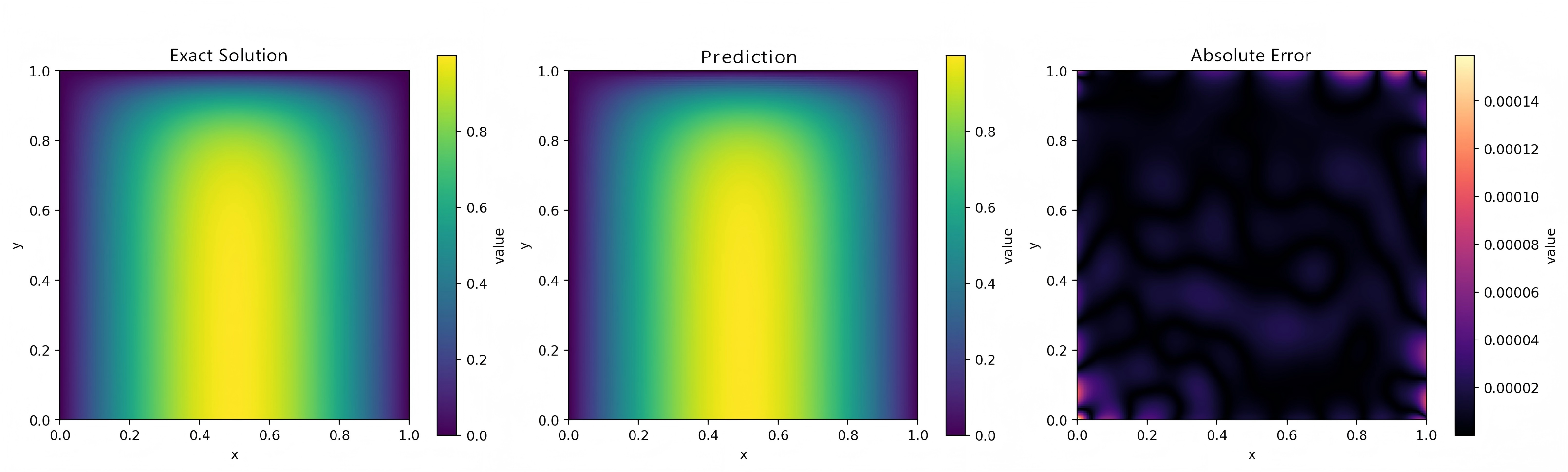}
\caption{Exact solution (left), LRX-PINN prediction (middle), and absolute error (right) for Task 3 ($\varepsilon=1\times 10^{-1}$).}
\label{task31e-1}
\end{figure}

\begin{table}[H]
\caption{Error comparison of Task 3 ($\varepsilon = 5\times 10^{-2}$).}
\centering
\footnotesize
\begin{tabular}{lccccccc}
\toprule
Method 
& Rel-$L_2$
& {Rel\text{-}line}
& Bc-max
& RMSE \\
\midrule

PIKAN
& $7.916{\times}10^{-2}$
& $1.153{\times}10^{-1}$
& $1.235{\times}10^{-1}$
& $1.397{\times}10^{-1}$\\

Fourier PINN
& $6.445{\times}10^{-3}$
& $8.477{\times}10^{-3}$
& $1.316{\times}10^{-2}$
& $1.245{\times}10^{-1}$\\

\textbf{LRX-PINN}
& $\mathbf{5.444{\times}10^{-5}}$
& $\mathbf{3.126{\times}10^{-5}}$
& $\mathbf{4.056{\times}10^{-4}}$
& $\mathbf{1.459{\times}10^{-3}}$\\

\bottomrule
\end{tabular}

\label{eps35e-2}
\end{table}

\begin{figure}[H]
\centering
\includegraphics[width=1\textwidth]{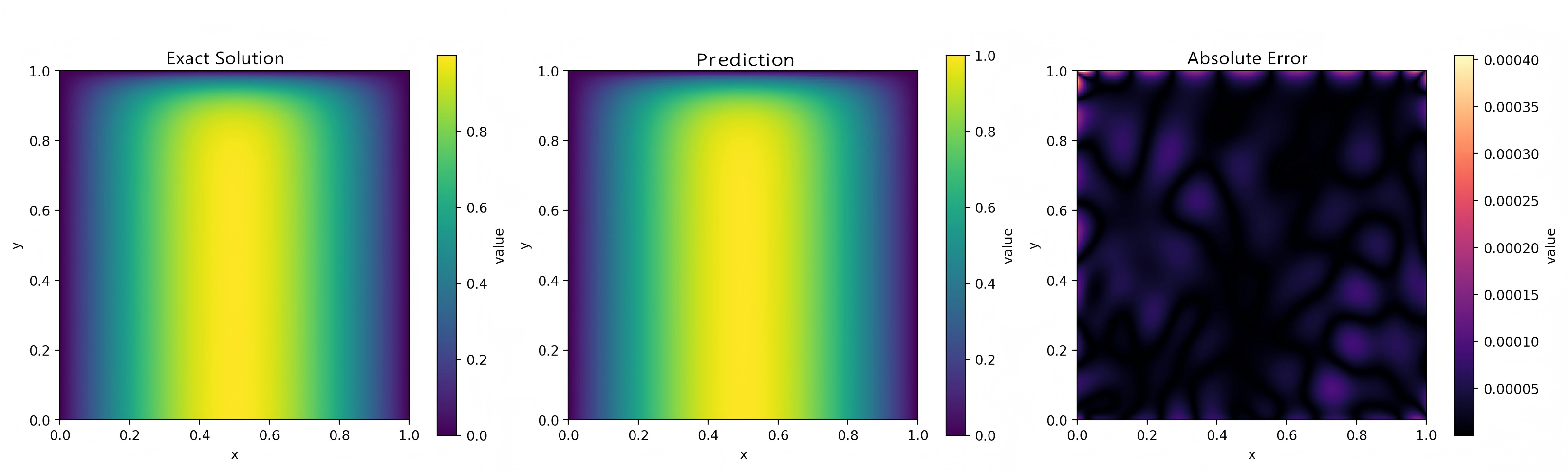}
\caption{Exact solution (left), LRX-PINN prediction (middle), and absolute error (right) for Task 3 ($\varepsilon=5\times 10^{-2}$).}
\label{task35e-2}
\end{figure}

Tables~\ref{eps35e-1}--\ref{eps35e-2} show that the proposed method remains accurate as the boundary layer becomes increasingly thin. While the errors of PIKAN and Fourier PINN grow significantly as $\varepsilon$ decreases, LRX-PINN maintains relative errors at the order of $10^{-5}$ across all tested cases.

This behavior indicates that the Cauchy-based representation naturally captures the highly localized structure of boundary layers. Consequently, the Cauchy-based representation remains relatively robust as
the layer width decreases.

\begin{table}[H]
\caption{Error comparison of Task 3 ($\varepsilon = 1\times 10^{-2}$).}
\centering
\footnotesize
\begin{tabular}{lcccccccc}
\toprule
Method 
& Rel-$L_2$
& {Rel\text{-}line}
& Bc-max
& RMSE \\
\midrule

PIKAN
& $6.444{\times}10^{-1}$
& $9.802{\times}10^{-1}$
& $1.999{\times}10^{-2}$
& $7.839{\times}10^{-1}$\\

Fourier PINN
& $7.881{\times}10^{-1}$
& $9.976{\times}10^{-1}$
& $1.235{\times}10^{-2}$
& $2.233{\times}10^{0}$\\

LRX-PINN (Cauchy activation)
& $1.471{\times}10^{-1}$
& $2.283{\times}10^{-1}$
& $1.210{\times}10^{-2}$
& $2.433{\times}10^{-1}$\\

\textbf{LRX-PINN (integrated Cauchy activation)}
& $\mathbf{4.645{\times}10^{-3}}$
& $\mathbf{6.867{\times}10^{-3}}$
& $\mathbf{1.743{\times}10^{-3}}$
& $\mathbf{2.237{\times}10^{-2}}$\\

\bottomrule
\end{tabular}

\label{eps31e-2}
\end{table}

\begin{figure}[H]
\centering
\includegraphics[width=1\textwidth]{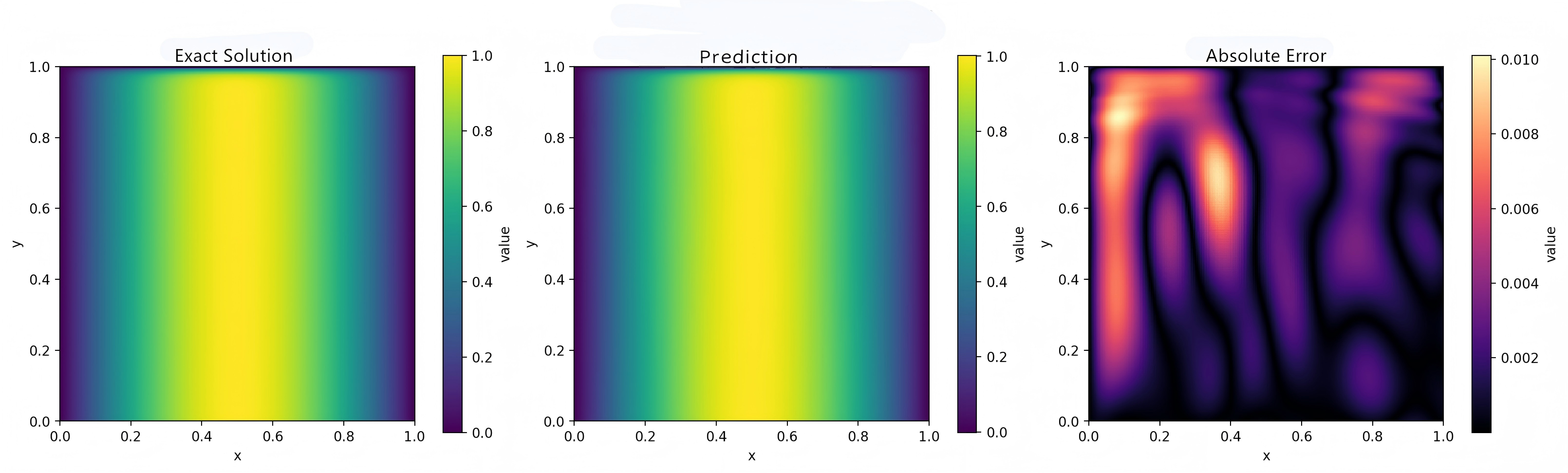}
\caption{Exact solution (left), LRX-PINN prediction (middle), and absolute error (right) for Task 3 ($\varepsilon=1\times 10^{-2}$).}
\label{task31e-2}
\end{figure}

Table~\ref{eps31e-2} presents a considerably more challenging boundary-layer problem with $\varepsilon=10^{-2}$. In this regime, the layer thickness becomes extremely small and standard PINN architectures struggle to capture the sharp boundary transition. As a result, both PIKAN and Fourier PINN exhibit severe performance deterioration, with relative errors approaching unity.

The original Cauchy activation remains capable of capturing the boundary-layer structure to some extent; however, its relative error is still at the order of $10^{-1}$. By contrast, the integrated Cauchy activation reduces the relative error to $4.645\times10^{-3}$ and decreases the residual RMSE by more than one order of magnitude. Such a dramatic improvement highlights the importance of explicitly incorporating layer-adapted structures into the activation design.

These results indicate that, for extremely thin boundary layers, the proposed integrated activation provides a significantly more suitable representation than both conventional neural activations and the original Cauchy formulation.

\subsection{Task 4: Mixed Interior-Boundary Layer Problem}

We consider the two-dimensional convection-diffusion equation
\[
-\varepsilon \Delta u(x,y) + \left(x-\frac12\right)\partial_x u(x,y) = f(x,y),
\qquad (x,y)\in \Omega=(0,1)^2,
\]
with Dirichlet boundary condition
\[
u(x,y)=u_{\mathrm{exact}}(x,y),
\qquad (x,y)\in\partial\Omega.
\]

The exact solution is chosen as a combination of a boundary layer and an interior layer:
\[
u_{\mathrm{exact}}(x,y)
=
\left(1-\exp\!\left(-\frac{1-x}{\varepsilon}\right)\right)
+
\tanh\!\left(\frac{x-\frac12}{\varepsilon}\right).
\]
This problem is more challenging because the solution simultaneously contains an interior transition layer and a boundary layer. The coexistence of multiple localized structures significantly increases the difficulty of accurately approximating the global solution profile.

We take $\varepsilon \in \left\{5\times10^{-1},\;1\times 10^{-1},\;5\times10^{-2},\;1\times10^{-2}\right\}$.

\begin{table}[H]
\caption{Error comparison of Task 4 ($\varepsilon = 5\times 10^{-1}$).}
\centering
\footnotesize
\begin{tabular}{lcccccccc}
\toprule
Method 
& Rel-$L_2$
& {Rel\text{-}line}
& Bc-max
& RMSE \\
\midrule

PIKAN
& $1.879{\times}10^{-4}$
& $1.305{\times}10^{-4}$
& $1.653{\times}10^{-3}$
& $3.724{\times}10^{-3}$\\

Fourier PINN
& $8.656{\times}10^{-4}$
& $4.449{\times}10^{-4}$
& $3.633{\times}10^{-3}$
& $4.389{\times}10^{-2}$\\

\textbf{LRX-PINN}
& $\mathbf{3.731{\times}10^{-5}}$
& $\mathbf{3.196{\times}10^{-5}}$
& $\mathbf{2.221{\times}10^{-4}}$
& $\mathbf{1.463{\times}10^{-3}}$\\

\bottomrule
\end{tabular}

\label{eps45e-1}
\end{table}

\begin{figure}[H]
\centering
\includegraphics[width=1\textwidth]{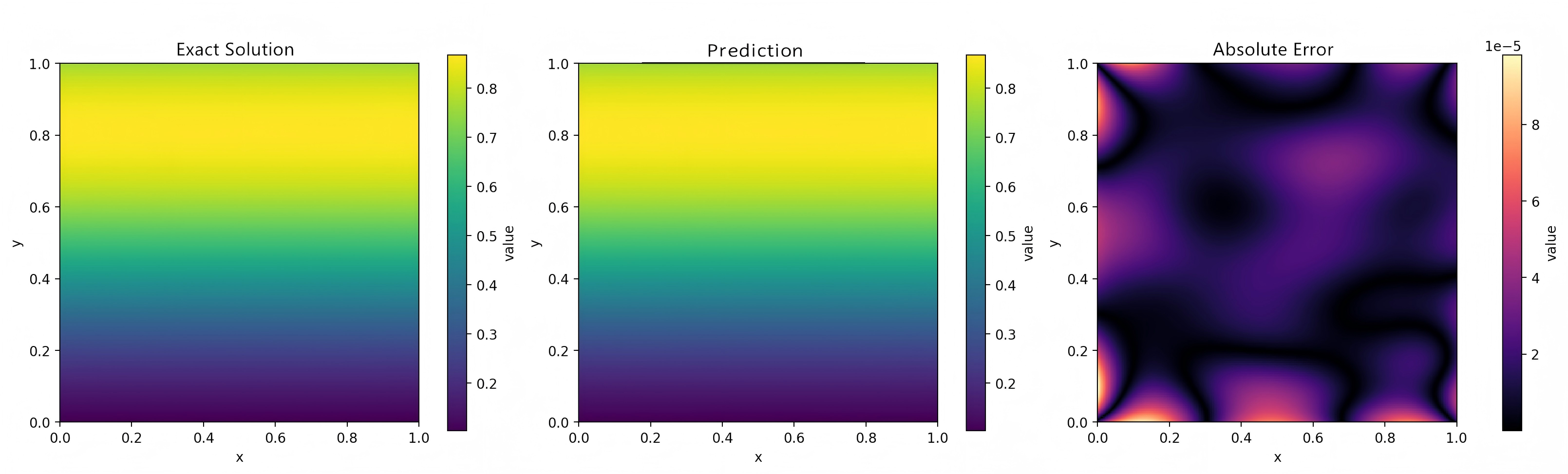}
\caption{Exact solution (left), LRX-PINN prediction (middle), and absolute error (right) for Task 4 ($\varepsilon=5\times 10^{-1}$).}
\label{task45e-1}
\end{figure}

\begin{table}[H]
\caption{Error comparison of Task 4 ($\varepsilon = 1\times 10^{-1}$).}
\centering
\footnotesize
\begin{tabular}{lcccccccc}
\toprule
Method 
& Rel-$L_2$
& {Rel\text{-}line}
& Bc-max
& RMSE \\
\midrule

PIKAN
& $6.224{\times}10^{-4}$
& $6.240{\times}10^{-4}$
& $2.598{\times}10^{-3}$
& $1.334{\times}10^{-2}$\\

Fourier PINN
& $4.478{\times}10^{-4}$
& $4.257{\times}10^{-4}$
& $3.633{\times}10^{-3}$
& $2.798{\times}10^{-2}$\\

\textbf{LRX-PINN}
& $\mathbf{4.847{\times}10^{-5}}$
& $\mathbf{4.096{\times}10^{-5}}$
& $\mathbf{2.076{\times}10^{-4}}$
& $\mathbf{1.460{\times}10^{-3}}$\\

\bottomrule
\end{tabular}

\label{eps41e-1}
\end{table}

\begin{figure}[H]
\centering
\includegraphics[width=1\textwidth]{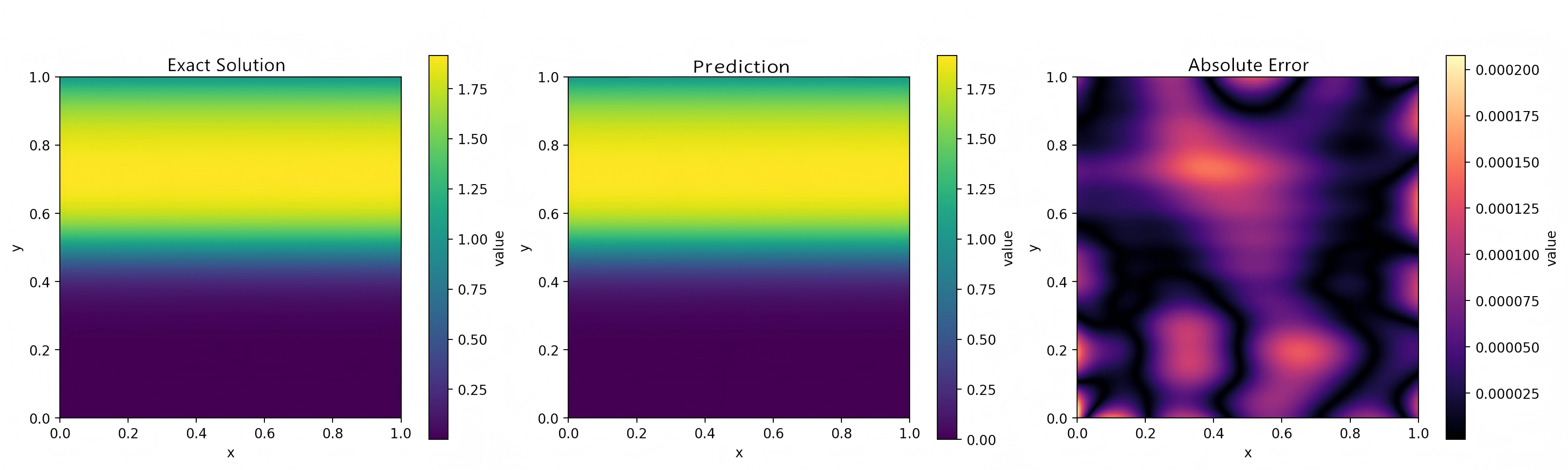}
\caption{Exact solution (left), LRX-PINN prediction (middle), and absolute error (right) for Task 4 ($\varepsilon=1\times 10^{-1}$).}
\label{task41e-1}
\end{figure}

\begin{table}[H]
\caption{Error comparison of Task 4 ($\varepsilon = 5\times 10^{-2}$).}
\centering
\footnotesize
\begin{tabular}{lcccccccc}
\toprule
Method 
& Rel-$L_2$
& {Rel\text{-}line}
& Bc-max
& RMSE \\
\midrule

PIKAN
& $1.834{\times}10^{-2}$
& $2.057{\times}10^{-2}$
& $5.663{\times}10^{-3}$
& $5.154{\times}10^{-2}$\\

Fourier PINN
& $9.711{\times}10^{-2}$
& $6.039{\times}10^{-2}$
& $3.651{\times}10^{-2}$
& $7.947{\times}10^{-1}$\\

\textbf{LRX-PINN}
& $\mathbf{2.090{\times}10^{-4}}$
& $\mathbf{2.186{\times}10^{-4}}$
& $\mathbf{4.285{\times}10^{-4}}$
& $\mathbf{4.147{\times}10^{-3}}$\\

\bottomrule
\end{tabular}

\label{eps45e-2}
\end{table}

\begin{figure}[H]
\centering
\includegraphics[width=1\textwidth]{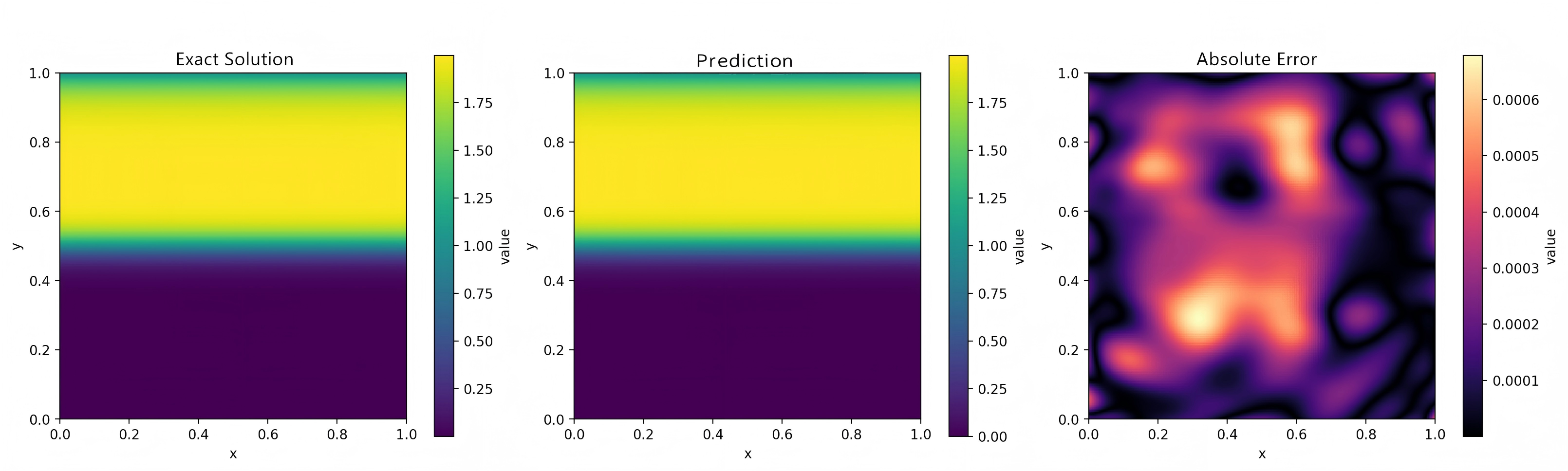}
\caption{Exact solution (left), LRX-PINN prediction (middle), and absolute error (right) for Task 4 ($\varepsilon=5\times 10^{-2}$).}
\label{task45e-2}
\end{figure}

Tables~\ref{eps45e-1} and~\ref{eps41e-1} show that the proposed LRX-PINN maintains excellent approximation accuracy for mixed-layer problems containing both interior and boundary layers. Compared with PIKAN and Fourier PINN, the proposed method consistently achieves the smallest global and local errors while using substantially fewer trainable parameters.

Unlike \textbf{Tasks~2} and \textbf{Tasks~3}, the present benchmark requires the network to simultaneously resolve two different types of localized structures. Nevertheless, the proposed Cauchy-based architecture accurately captures both the interior transition layer and the boundary layer, indicating strong representational capability for multiscale solution profiles.

As the diffusion parameter decreases to $\varepsilon=5\times10^{-2}$, the problem becomes considerably more challenging due to the interaction of increasingly thin interior and boundary layers. As shown in Table~\ref{eps45e-2}, LRX-PINN still achieves a relative $L_2$ error of $2.090\times10^{-4}$, whereas the errors of PIKAN and Fourier PINN increase to the orders of $10^{-2}$ and $10^{-1}$, respectively. This result demonstrates the robustness of the proposed architecture even in the presence of multiple thin localized structures.

\begin{table}[H]
\caption{Error comparison of Task 4 ($\varepsilon = 1\times 10^{-2}$).}
\centering
\footnotesize
\begin{tabular}{lcccccccc}
\toprule
Method 
& Rel-$L_2$
& {Rel\text{-}line}
& Bc-max
& RMSE \\
\midrule

PIKAN
& $1.225{\times}10^{-1}$
& $1.893{\times}10^{-1}$
& $1.363{\times}10^{-2}$
& $2.818{\times}10^{-1}$\\

Fourier PINN
& $7.104{\times}10^{-1}$
& $3.041{\times}10^{-1}$
& $1.498{\times}10^{-1}$
& $1.217{\times}10^{-1}$\\

LRX-PINN (Cauchy activation)
& $5.476{\times}10^{-2}$
& $8.298{\times}10^{-2}$
& $3.912{\times}10^{-3}$
& $2.480{\times}10^{-1}$\\

\textbf{LRX-PINN (integrated Cauchy activation)}
& $\mathbf{8.295{\times}10^{-3}}$
& $\mathbf{3.940{\times}10^{-3}}$
& $\mathbf{5.364{\times}10^{-3}}$
& $\mathbf{1.103{\times}10^{-2}}$\\

\bottomrule
\end{tabular}

\label{eps41e-2}
\end{table}

\begin{figure}[H]
\centering
\includegraphics[width=1\textwidth]{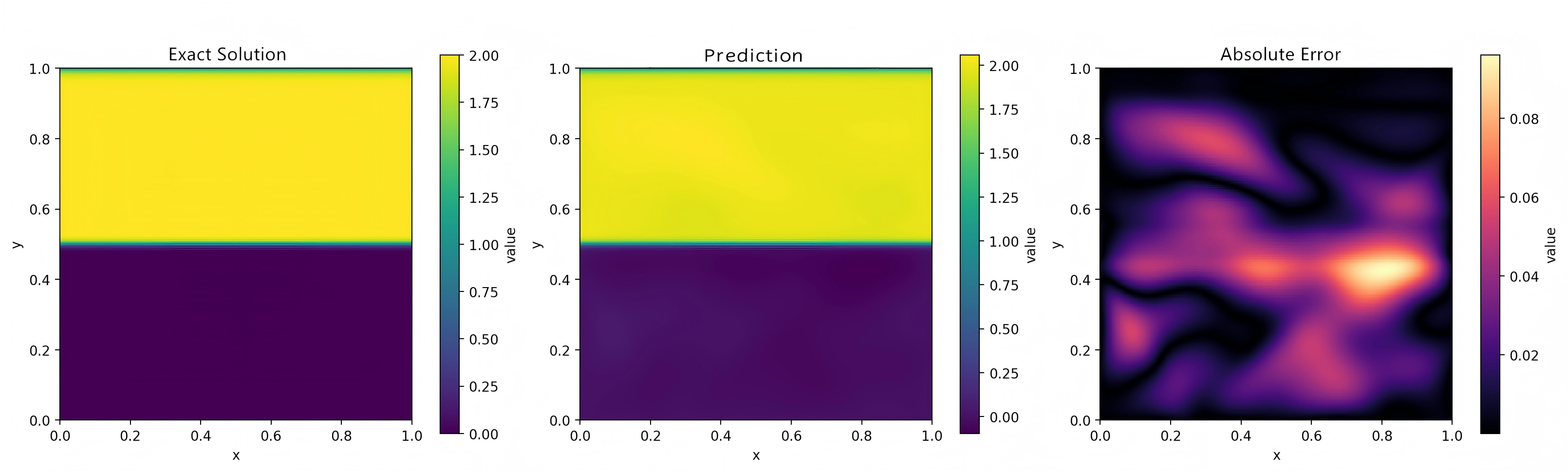}
\caption{Exact solution (left), LRX-PINN prediction (middle), and absolute error (right) for Task 4 ($\varepsilon=1\times 10^{-2}$).}
\label{task41e-2}
\end{figure}

The strongly convection-dominated case with $\varepsilon=10^{-2}$ further highlights the importance of the proposed layer-resolving activation. As shown in Table~\ref{eps41e-2}, both PIKAN and Fourier PINN suffer substantial accuracy degradation in this multiscale setting. Although the original Cauchy activation already outperforms the competing approaches, the proposed integrated Cauchy activation further reduces the relative $L_2$ error from $5.476\times10^{-2}$ to $8.295\times10^{-3}$ and decreases the residual RMSE by more than one order of magnitude.

Compared with \textbf{Tasks~2} and \textbf{Tasks~3}, this benchmark is particularly challenging because multiple thin layers coexist simultaneously. The significant improvement obtained by the integrated activation demonstrates that the proposed layer-resolving transition structure remains effective even when several localized features interact within the same solution. This robustness is particularly important for practical convection-dominated applications, where complex multiscale layer structures frequently arise.

\subsection{Task 5: Outflow Layer Problem}
This benchmark problem was originally proposed by John et al.~\cite{john1997} and has subsequently been widely used for evaluating numerical methods for strongly convection-dominated convection--diffusion equations \cite{anandh2025,frerichs2026}. The problem features two outflow boundary layers whose widths are of order $O(\varepsilon)$ according to asymptotic analysis.

\begin{equation*}
-\varepsilon \Delta u + 2\partial_x u +3\partial_y u+u= f,
\qquad \mathbf{(x,y)}\in(0,1)^2,
\end{equation*}
where $\varepsilon=10^{-8}$, the exact solution has the form:
\[
u(x,y)=xy^2-y^2e^{\frac{2(x-1)}{\varepsilon}}-xe^{\frac{3(y-1)}{\varepsilon}}+e^{\frac{2(x-1)+3(y-1)}{\varepsilon}}
\]
Dirichlet boundary conditions are satisfied. By construction, the outflow boundary layers are located near \(x=1\) and \(y=1\), with a corner-layer interaction near the upper-right corner of the domain. 

\begin{table}[H]
\caption{$L^2$ error for the outflow-layer problem using different PINN-based methods.}
\centering
\footnotesize
\begin{tabular}{lcccc}
\toprule
Method
& $L_{t_0}^{lr}$ loss
& \\
\midrule
$\text{PINNs (various loss functionals, \text{best reported result in \cite{frerichs2026}})}$
& $3.419\times 10^{-2}$\\
\midrule

& $L_\tau^{SUPG}$ constant $\tau$ 
& $L_\tau^{SUPG}$ learnt $\tau$\\
\midrule

$\text{Improved hp-VPINN (\text{best reported result in \cite{anandh2025}})}$
& $1.340\times 10^{-4}$
& $1.037\times 10^{-4}$\\

$\text{LRX-enhanced hp-VPINN (Cauchy activation)}$
& $1.122\times 10^{-4} $
& $1.005\times 10^{-4} $ \\

$\textbf{LRX-enhanced hp-VPINN (integrated Cauchy activation)}$
& $\mathbf{6.318\times 10^{-5}}$
& $\mathbf{3.543\times 10^{-5}}$  \\

\bottomrule
\end{tabular}
\label{d}
\end{table}

\begin{figure}[H]
\centering
\includegraphics[width=0.32\textwidth]{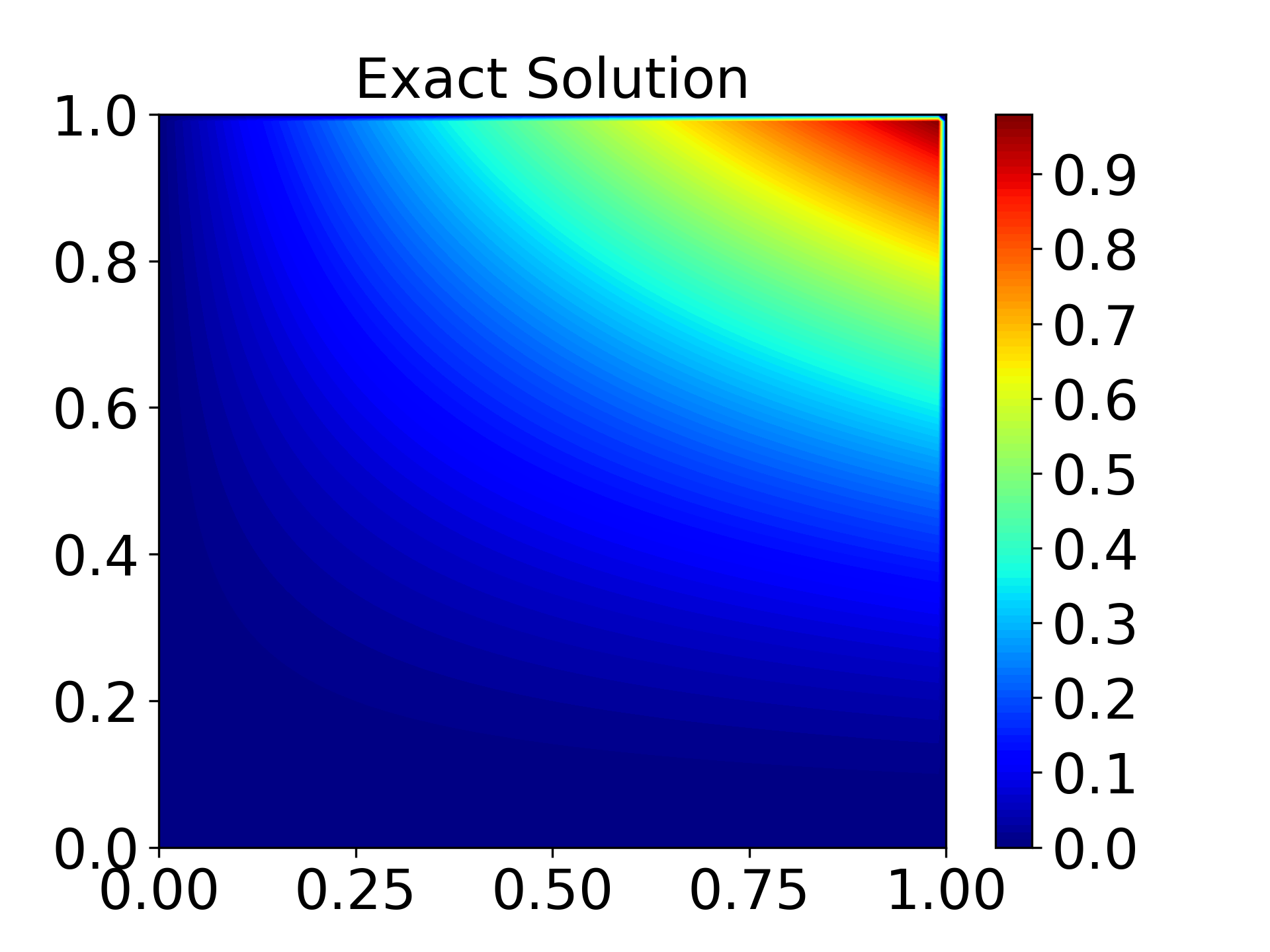}
\hfill
\includegraphics[width=0.32\textwidth]{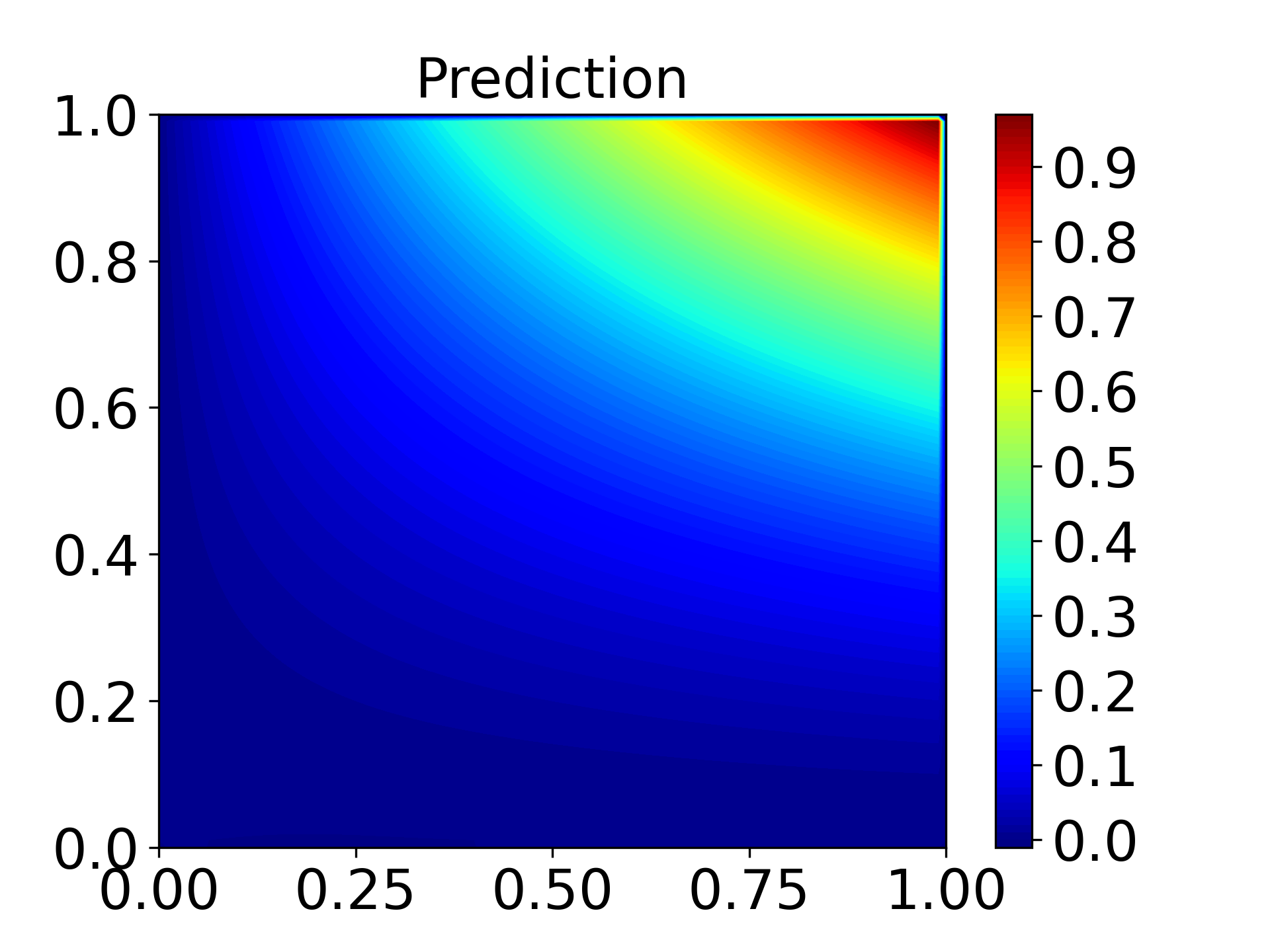}
\hfill
\includegraphics[width=0.32\textwidth]{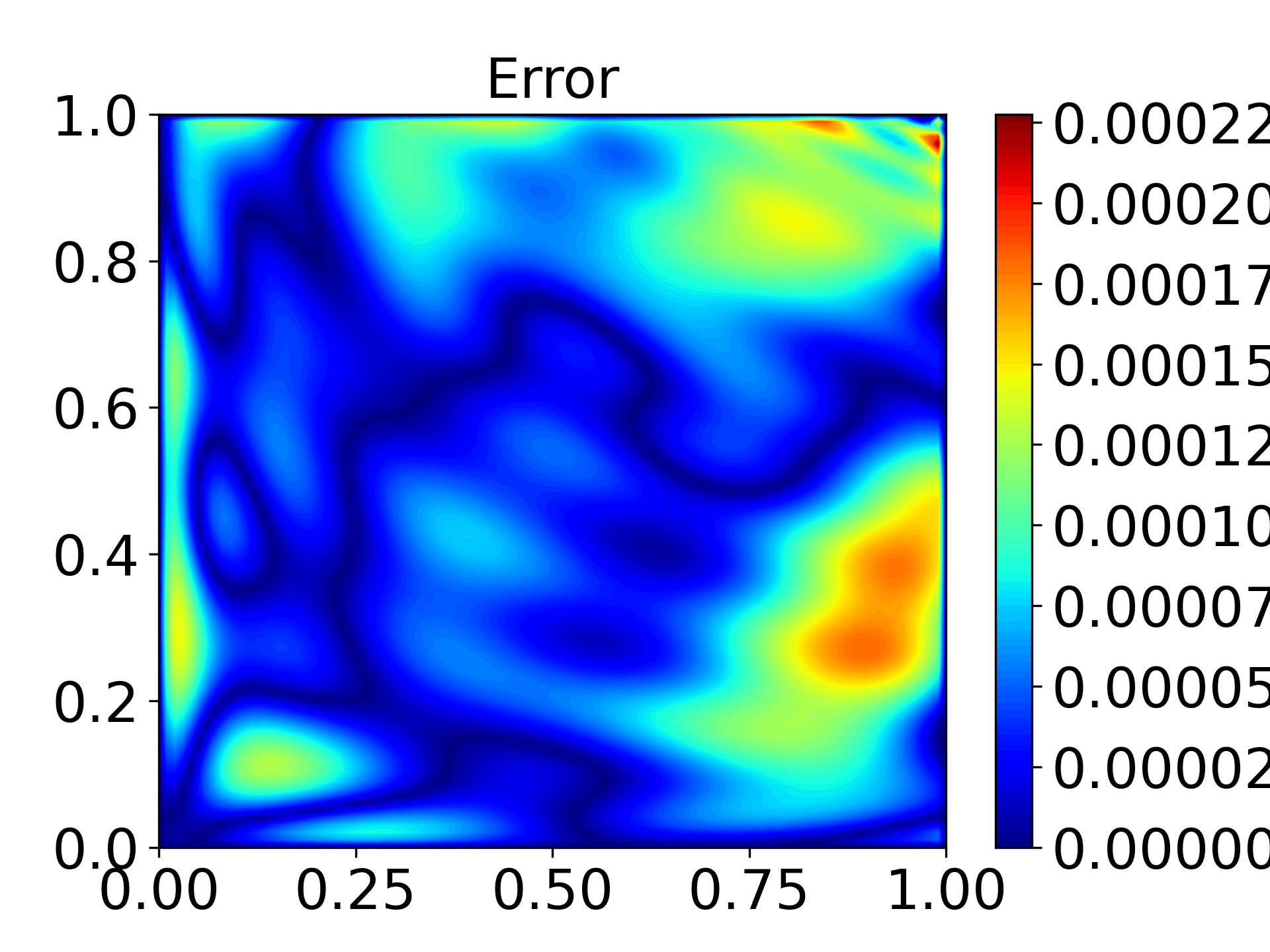}
\caption{Exact solution (left), LRX-enhanced hp-VPINN prediction with the integrated Cauchy activation (middle), and absolute error (right).}
\end{figure}

The outflow-layer problem constitutes an extremely convection-dominated benchmark with $\varepsilon=10^{-8}$, where two exponentially thin layers develop near the outflow boundaries and a corner singularity is present at the upper-right corner of the domain. Such features are notoriously difficult to approximate using standard PINN architectures.

The LRX-enhanced hp-VPINN, which incorporates the proposed integrated Cauchy representation into the improved hp-VPINN framework, consistently improves the solution accuracy under both SUPG formulations. Compared with the original improved hp-VPINN, the relative error is reduced from $1.340\times10^{-4}$ to $6.318\times10^{-5}$ for the constant-$\tau$ formulation and from $1.037\times10^{-4}$ to $3.543\times10^{-5}$ for the learned-$\tau$ formulation. Furthermore, the integrated activation substantially outperforms the original Cauchy activation under identical network architectures and training settings.

Most importantly, these improvements are obtained without modifying the stabilization strategy, loss functional, or network topology. While the original Cauchy activation already provides a slight improvement over the baseline hp-VPINN, the integrated Cauchy activation yields a substantially larger error reduction under both SUPG formulations. This indicates that the observed performance gain originates primarily from the proposed layer-resolving integrated Cauchy structure rather than from the use of a localized activation alone.

\subsection{Task 6: Circular Interior Layer Problem}
This benchmark was originally introduced by John et al.~\cite{john1997}
and has recently been revisited in \cite{matthaiou2025,frerichs2026}. Among the references considered here, the best reported result is provided by the loss-functional-oriented PINN framework in \cite{frerichs2026}.
\begin{equation*}
-\varepsilon \Delta u + 2\partial_x u +3\partial_y u+2u= f,
\qquad \mathbf{(x,y)}\in(0,1)^2,
\end{equation*}
where $\varepsilon=10^{-8}$, the exact solution has the form:
\begin{equation*}
u(x,y)=16x(1-x)(1-y)\left( \frac{1}{2}+\frac{\text{arctan}(200(0.25^2-(x-0.5)^2-(y-0.5)^2))}{\pi}\right)
\end{equation*}
Dirichlet boundary conditions are satisfied. 

\begin{table}[H]
\caption{$L^2$ error for the circular interior layers problem using different PINN-based methods.}
\centering
\footnotesize
\begin{tabular}{lcccc}
\toprule

Method
&
\\
\midrule

$\text{hp-VPINN with $L_{0.1}^{lrew}$ loss (\text{best reported result in \cite{frerichs2026}})}$
& $1.330\times 10^{-3}$ \\

$\text{Improved hp-VPINN \cite{anandh2025}}$
& $1.620\times 10^{-3}$ \\

$\text{LRX-enhanced hp-VPINN (Cauchy activation)}$
& $1.447 \times 10^{-3}$\\

$\textbf{LRX-enhanced hp-VPINN (integrated Cauchy activation)}$
& $\mathbf{8.097\times 10^{-4}}$\\

\bottomrule
\end{tabular}

\label{d2}
\end{table}

\begin{figure}[H]
\centering
\includegraphics[width=0.32\textwidth]{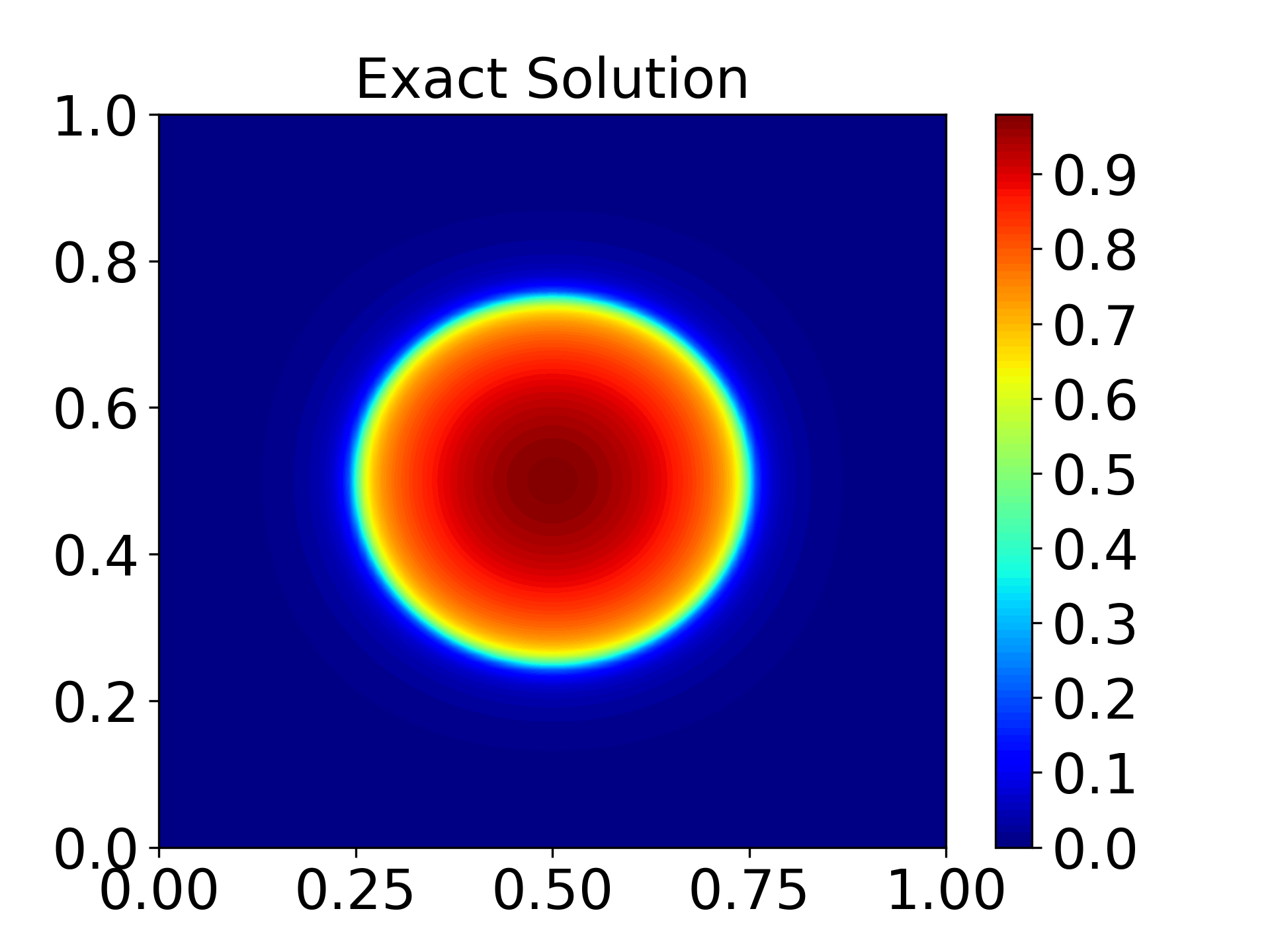}
\hfill
\includegraphics[width=0.32\textwidth]{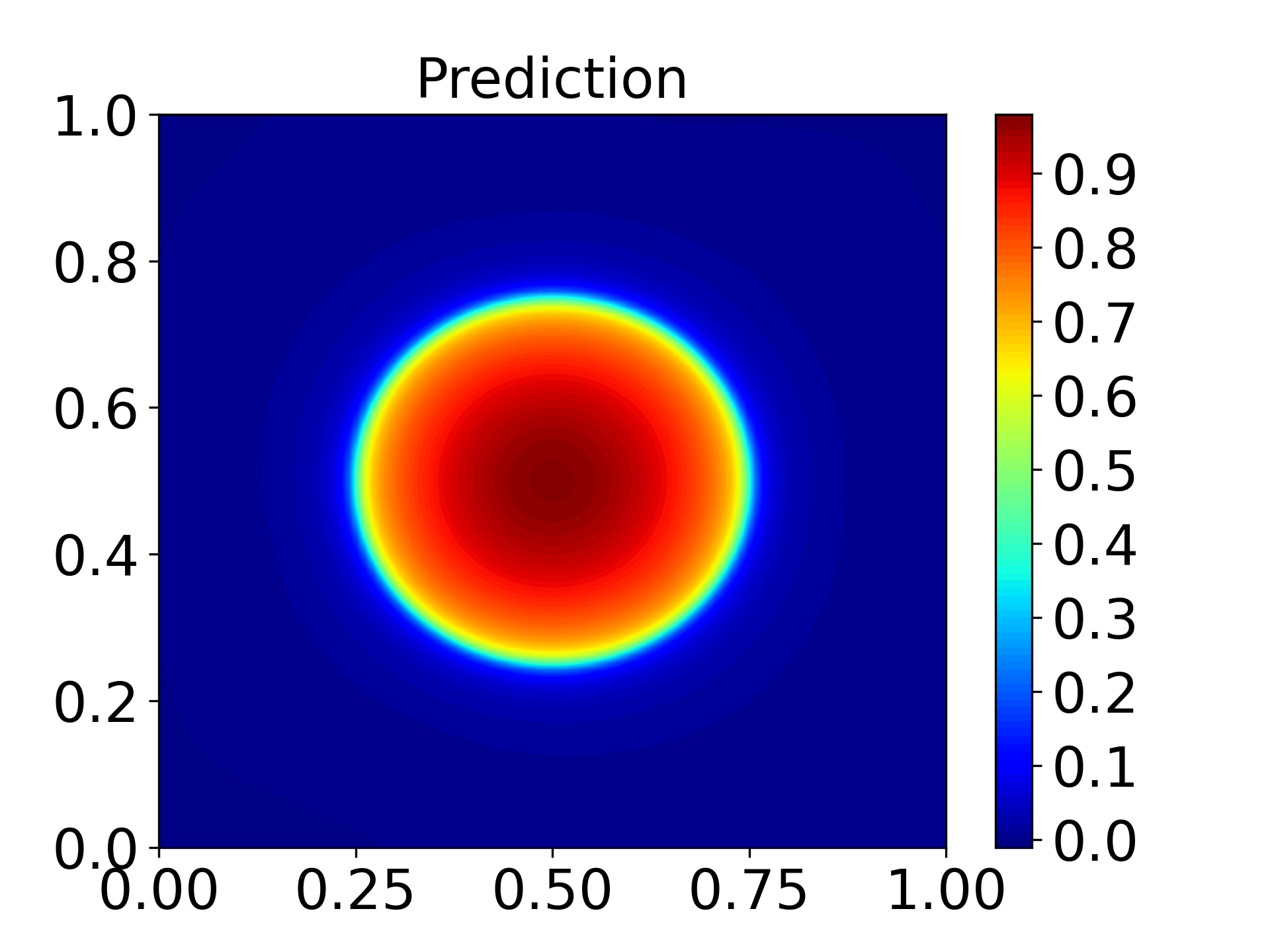}
\hfill
\includegraphics[width=0.32\textwidth]{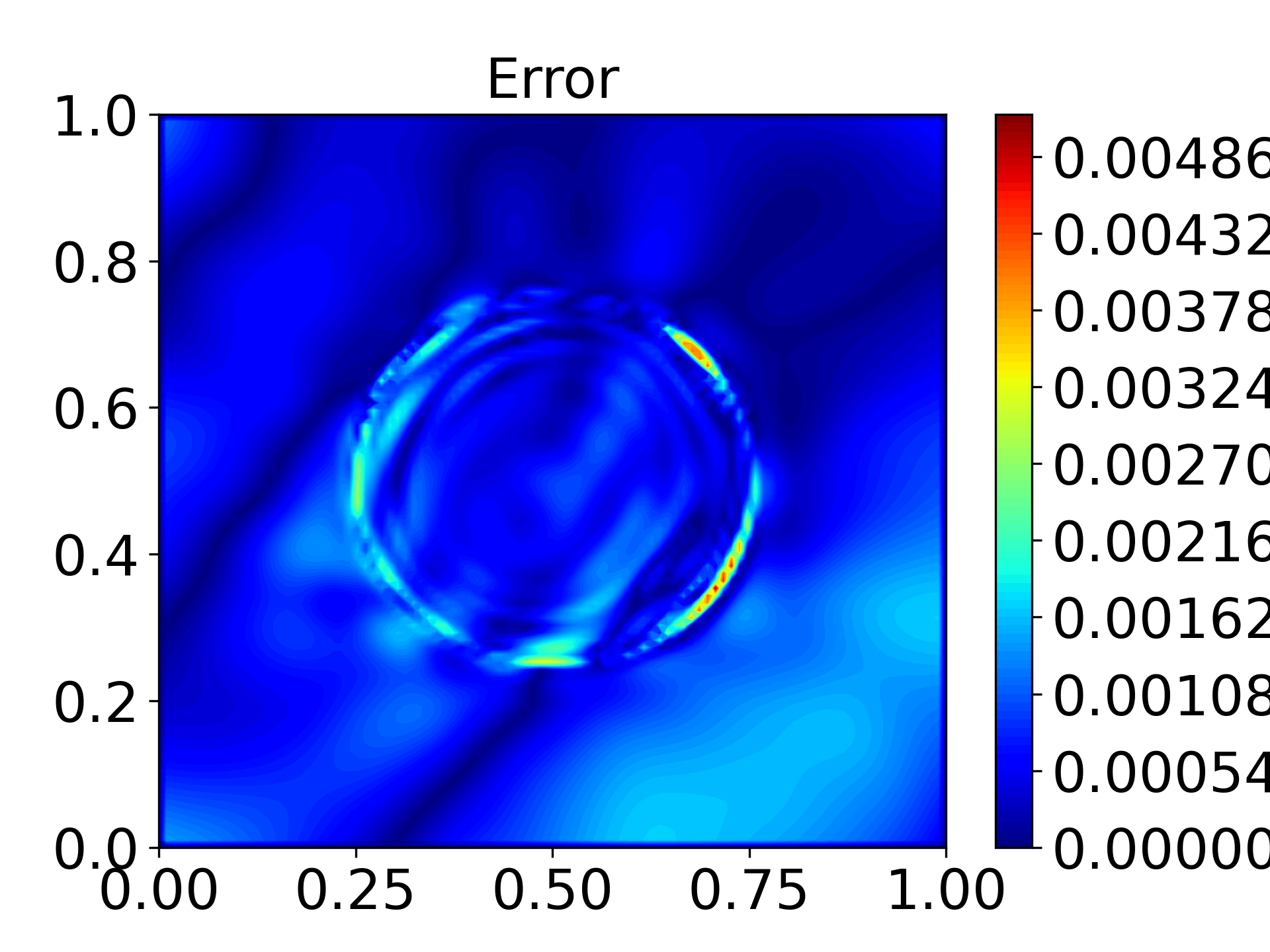}
\caption{Exact solution (left), LRX-enhanced hp-VPINN prediction with the integrated Cauchy activation (middle), and absolute error (right).}
\end{figure}

Unlike the outflow-layer benchmark, the circular interior-layer problem introduces additional geometric complexity due to the presence of a curved transition interface. Consequently, successful approximation requires not only localization but also the ability to accurately represent curved layer structures.

As shown in Table~\ref{d2}, the LRX-enhanced hp-VPINN with original Cauchy activation provides only a modest improvement over the improved hp-VPINN framework. In contrast, the LRX-enhanced hp-VPINN with proposed integrated Cauchy activation reduces the relative error from $1.447\times10^{-3}$ to $8.097\times10^{-4}$ and achieves the best overall result among all compared methods.

This observation reveals an important distinction between the original and integrated Cauchy activations. While the original activation already possesses strong localization properties, its improvement remains limited. The significantly larger gain obtained by the integrated activation demonstrates that the proposed layer-resolving transition structure, rather than localization alone, is the primary factor responsible for the observed performance improvement. Together with the results of \textbf{Task~5}, this provides strong evidence supporting the importance of the proposed integrated Cauchy activation in the overall performance improvements of the present work.

\subsection{Empirical Validation of the Layer-Resolving Representation}

Table~\ref{deps} summarizes the best-performing initial values of the scale parameter $d$ used in \textbf{Tasks 2--4}. Although these values were selected independently for different benchmark problems, a clear trend can be observed: problems with smaller diffusion parameters generally favor smaller initial scales. Since decreasing $\varepsilon$ produces increasingly narrow layer structures, this observation suggests that thinner layers require neural bases with finer representation scales.

\begin{table}[H] 
\caption{Best-performing initial values of the scale parameter $d$ used for Tasks 2--4.}
\centering
\footnotesize 
\resizebox{0.4\textwidth}{!}{ 
\begin{tabular}{lcccc} 
\toprule & Task 2 & Task 3 & Task 4\\ \midrule 
$\varepsilon=0.5$ & 0.5 & 0.3 & 0.5\\ 
$\varepsilon=0.1$ & 0.2 & 0.25 & 0.35\\ 
$\varepsilon=0.05$ & 0.18 & 0.2 & 0.2 \\ 
$\varepsilon=0.01$ & 0.02 & 0.06 & 0.09 \\ 
\bottomrule 
\end{tabular} 
} 

\label{deps} 
\end{table}

To further investigate this relationship, we perform an ablation study by varying the initial scale parameter $d$ and selecting the best-performing value after 5000 Adam iterations. The results are shown in Figure~\ref{initial}. Across different benchmark problems, the optimal initial scale parameter consistently decreases as $\varepsilon$ becomes smaller. This behavior agrees with the neural resolution principle established in Section~2, which predicts that the representation scale should decrease together with the physical layer width. Consequently, thinner layers are more effectively approximated by integrated Cauchy bases with smaller initial scales.

\begin{figure}[H]
\centering
\includegraphics[width=0.8\textwidth]{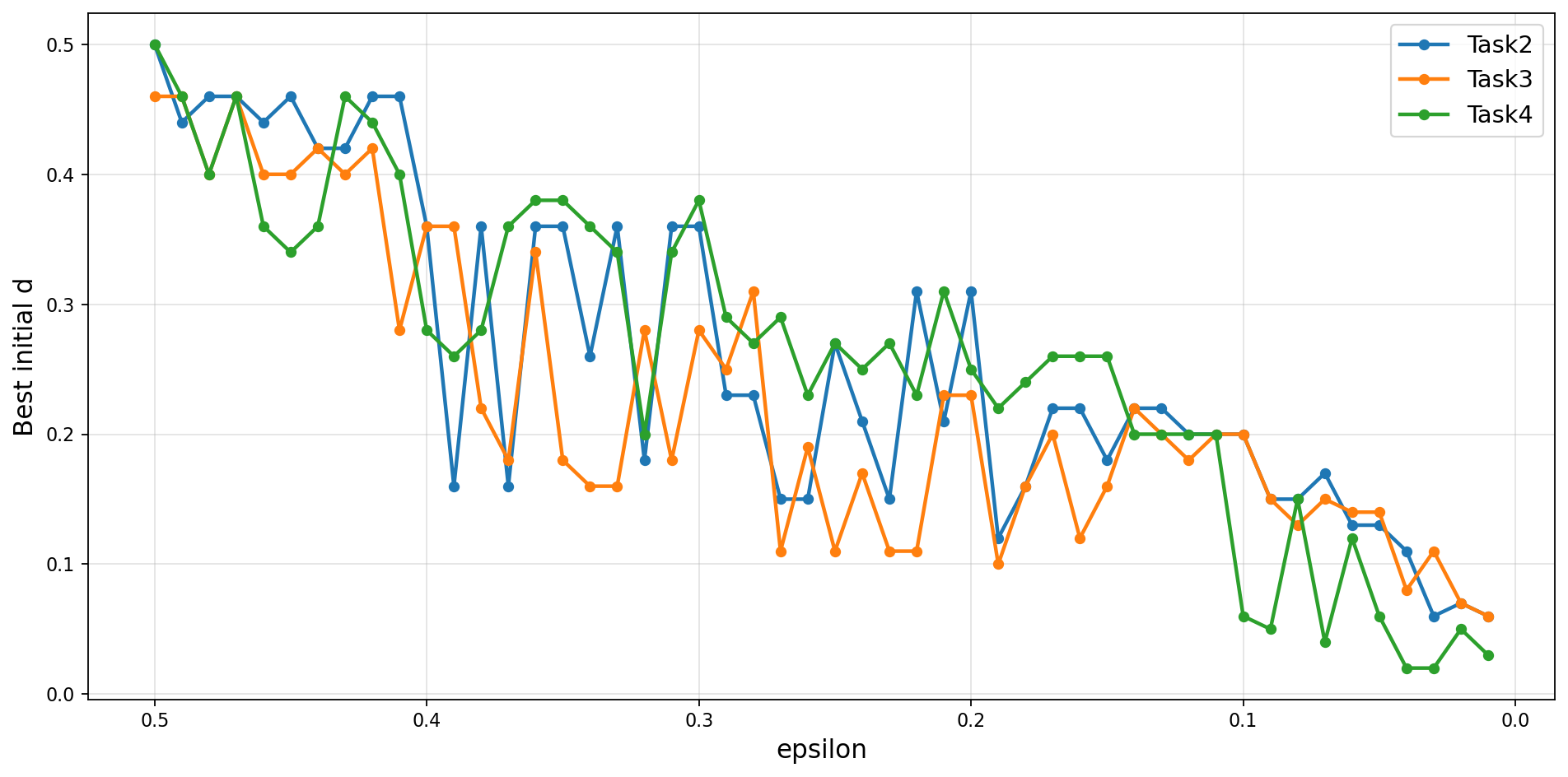}
\caption{Optimal initial scale parameter $d$ obtained from the ablation study under different parameters $\varepsilon$.}
\label{initial}
\end{figure}

We further evaluate robustness with respect to random initialization by repeating each experiment using six different random seeds 0-5. Table~\ref{seed} reports the mean $L^2$ error together with the maximum deviation from the mean. Across all benchmark problems, the proposed method achieves low errors with moderate variations across different initializations, indicating stable and reproducible performance. These results suggest that the observed accuracy improvements are not due to favorable random seeds or isolated training runs, but instead arise from the proposed layer-resolving representation itself.

\begin{table}[H]
\centering
\caption{Robustness evaluation over random seeds 0--5. Results are reported as mean $L^2$ error $\pm$ maximum deviation from the mean.}
\label{seed}
\begin{tabular}{lc}
\toprule
Tasks & $L^2$ Error with seeds 0-5\\
\midrule
Task 1 ($\varepsilon$=1)&  $7.677 \times 10^{-5} \pm 1.622 \times 10^{-5}$ \\
Task 2 ($\varepsilon$=0.5)& $ 5.862 \times 10^{-5} \pm 1.704 \times 10^{-5}$ \\
Task 2 ($\varepsilon$=0.1) & $ 5.945 \times 10^{-5} \pm 1.804 \times 10^{-5}$ \\
Task 2 ($\varepsilon$=0.05) & $ 2.239 \times 10^{-4} \pm 1.129 \times 10^{-4}$ \\
Task 2 ($\varepsilon$=0.01) & $ 2.889 \times 10^{-4} \pm 1.044 \times 10^{-4}$\\
Task 3 ($\varepsilon$=0.5) & $ 3.907 \times 10^{-5} \pm 1.191 \times 10^{-5}$\\
Task 3 ($\varepsilon$=0.1) & $ 2.650 \times 10^{-5} \pm 0.632 \times 10^{-5}$\\
Task 3 ($\varepsilon$=0.05) & $ 5.138 \times 10^{-5} \pm 1.682 \times 10^{-5}$\\
Task 3 ($\varepsilon$=0.01) & $ 5.566 \times 10^{-3} \pm 0.922 \times 10^{-3}$\\
Task 4 ($\varepsilon$=0.5) & $ 3.345 \times 10^{-5} \pm 0.386 \times 10^{-5}$\\
Task 4 ($\varepsilon$=0.1) & $ 5.105 \times 10^{-5} \pm 1.002 \times 10^{-5}$\\
Task 4 ($\varepsilon$=0.05) & $ 2.552 \times 10^{-4} \pm 0.537 \times 10^{-4}$\\
Task 4 ($\varepsilon$=0.01) & $ 8.855 \times 10^{-3} \pm 1.165 \times 10^{-3}$\\
Task 5 ($\varepsilon=10^{-8}$) & $ 3.335 \times 10^{-5} \pm 0.635 \times 10^{-5}$\\
Task 6 ($\varepsilon=10^{-8}$) & $ 7.254 \times 10^{-4} \pm 1.723 \times 10^{-4}$\\
\bottomrule
\end{tabular}
\end{table}

An additional practical advantage of the proposed LRX-PINN comes from the explicit analytic form of the integrated Cauchy basis. For a single-hidden-layer
Cauchy network, the function values and the derivatives entering the PDE
residual can be evaluated either by automatic differentiation or by the
closed-form identities derived in Section~2. These two implementations
represent the same trial space and the same residual formulation; therefore,
the closed-form implementation should be understood as a computational
acceleration rather than as a separate approximation mechanism.

Table \ref{time} reports the training time obtained with automatic differentiation and with the closed-form implementation. The closed-form version substantially reduces the computational overhead of LRX-PINN. Since the underlying approximation space and loss functional are unchanged, this replacement should not be interpreted as a separate accuracy-improving mechanism. Instead, the main benefit is that the explicit derivative structure of the integrated Cauchy basis can be exploited to accelerate residual and gradient evaluation.

\begin{table}[H] 
\caption{Training time of different PINNs.}
\centering
\footnotesize  
\begin{tabular}{lcccc} 
\toprule & PIKAN & Fourier PINN  & LRX-PINN & LRX-PINN with explicit gradient\\ \midrule 
Task 1  & 4041.87s & 441.39s & 1120.38s & 202.12s\\ 
Task 2 ($\varepsilon=0.5$) & 7856.24 s & 496.41 s & 903.45 s & 281.35s\\ 
Task 2 ($\varepsilon=0.1$) & 5651.36 s & 564.05 s & 1057.29s & 359.77s\\ 
Task 2 ($\varepsilon=0.05$) & 6451.36 s & 511.10 s  & 1158.20 s & 452.68s\\ 
Task 2 ($\varepsilon=0.01$) & 5647.80s  & 673.51s & 1168.91s  & 506.82s\\ 
Task 3 ($\varepsilon=0.5$) & 7705.79s & 555.64 s & 651.26s & 430.52s\\ 
Task 3 ($\varepsilon=0.1$) & 7874.28s  & 509.02s & 1046.17s & 434.38s\\ 
Task 3 ($\varepsilon=0.05$) & 8277.91s & 566.63s & 1826.48s & 499.21s\\ 
Task 3 ($\varepsilon=0.01$) & 8524.07 s & 576.49 s & 1232.77 s & 504.26s\\ 
Task 4 ($\varepsilon=0.5$) & 8890.24s & 507.84s  & 1160.69s & 424.62s\\ 
Task 4 ($\varepsilon=0.1$) & 9910.87 s & 535.06 s & 1123.81s & 531.45s\\ 
Task 4 ($\varepsilon=0.05$) & 8059.20 s & 532.76 s  & 1380.25s & 499.77s\\ 
Task 4 ($\varepsilon=0.01$) & 11285.41 s & 576.49 s & 1338.37 s & 517.83s \\ 
\bottomrule 
\end{tabular} \label{time}
\end{table}

The timing results show that the analytic implementation can make the compact LRX-PINN significantly faster in practice under the tested implementation. This acceleration is a consequence of the explicit structure of the single-layer integrated Cauchy representation: the residual terms involve only \(\Phi_d\), \(\phi_d\), and
\(\phi_d'\), which can be evaluated directly. We emphasize that this implementation does not change the model class, the loss functional, or the training objective. Hence the accuracy gains reported in the preceding sections should be attributed to the layer-resolving representation itself, whereas the closed-form implementation provides an additional computational benefit.

\section{Conclusion}\label{conclusion}

This work investigates convection-dominated convection--diffusion problems from the perspective of neural representation design. Motivated by the value--derivative structure commonly observed in thin-layer solutions, we develop a Layer-Resolving XNet Physics-Informed Neural Network (LRX-PINN) based on integrated Cauchy representations. By integrating localized Cauchy rational kernels, the proposed basis exhibits a transition-type structure at the solution level while recovering localized rational features after differentiation. This design aligns the neural trial space with the characteristic structure of convection-dominated layers, whose values remain bounded while their derivatives are sharply localized.

We establish a representation-level analysis based on layer scaling, approximation inheritance for analytic layer profiles, and residual-structure considerations. The analysis suggests that integrated Cauchy bases inherit the exponential approximation mechanism of Cauchy/XNet representations, while their derivatives provide localized Cauchy-type features in the convection--diffusion residual. The effective neural scale $d/\|w\|$ provides a natural resolution parameter for thin-layer structures.

Numerical experiments on interior-layer, boundary-layer, mixed-layer, outflow-layer, corner-layer, and curved-layer benchmarks show that LRX-PINN consistently improves accuracy, robustness, and parameter efficiency compared with existing PINN approaches. The improvement becomes more pronounced in strongly convection-dominated regimes where extremely thin layers are present. Additional experiments with hp-VPINN-based frameworks further indicate that the performance gains are primarily associated with the proposed layer-resolving representation, rather than with a specific loss functional, stabilization strategy, or optimization setting.

A key outcome of this study is the parameter efficiency of the proposed representation. In Tasks 1--4, LRX-PINN uses 841 trainable parameters, whereas PIKAN and Fourier PINN use 7065 and 3201 parameters, respectively. Thus, LRX-PINN uses only approximately $11.9\%$ and $26.3\%$ of the parameters of PIKAN and Fourier PINN, while achieving lower errors across all reported benchmarks. This suggests that matching the neural basis to the layer structure can be more effective than simply increasing the number of trainable parameters.

More broadly, this work suggests that improving physics-informed neural networks may not always require deeper architectures, more complex optimization strategies, or increasingly sophisticated loss functionals. Instead, designing neural representations according to the intrinsic analytical structure of PDE solutions provides a complementary and often effective approach for multiscale problems. The proposed LRX-PINN illustrates how incorporating layer-aware structures into the neural trial space can simultaneously improve approximation accuracy, parameter efficiency, and computational efficiency. We expect that this representation-oriented perspective may be extended to other PDEs involving localized structures, sharp interfaces, and multiscale phenomena. More generally, this work indicates that aligning neural representations with the mathematical structure of PDEs is a useful principle for designing efficient physics-informed computational methods.

The present work focuses on steady linear convection--diffusion problems in the singularly perturbed regime. Before addressing nonlinear convection-dominated systems such as Burgers or Navier--Stokes equations, it is important to first address the representation mismatch caused by extremely thin layers in the linear setting. This provides a useful foundation for developing robust physics-informed methods for more complex transport-dominated problems. Future work will further extend the proposed framework to nonlinear systems, higher-dimensional problems, and adaptive selection of representation scales.

\section*{Data Availability Statement}

The data that support the findings of this study are available from
the corresponding author upon reasonable request.

\section*{Declaration of Interests}

The authors report no conflict of interest.

\end{document}